\documentclass[12pt,reqno]{amsart}

\usepackage{amsfonts}
\usepackage{amsmath}
\usepackage{amssymb}
\usepackage{thmtools}
\usepackage{amsthm}
\usepackage[protrusion=true,expansion=false]{microtype}
\usepackage{enumitem}
\usepackage{tikz, tikz-cd}
\usepackage{ytableau}
\usepackage{quiver}
\usepackage[colorlinks=true,linkcolor=blue,citecolor=blue,urlcolor=blue]{hyperref}

\theoremstyle{plain}
\newtheorem{theorem}{Theorem}[section]
\newtheorem{proposition}[theorem]{Proposition}
\newtheorem{lemma}[theorem]{Lemma}
\newtheorem{corollary}[theorem]{Corollary}
\newtheorem{conjecture}[theorem]{Conjecture}

\theoremstyle{definition}
\newtheorem{definition}[theorem]{Definition}
\newtheorem{example}[theorem]{Example}

\theoremstyle{remark}
\newtheorem{remark}[theorem]{Remark}

\numberwithin{equation}{section}

\usepackage[]{cite}

\newcommand\CC{\mathbb{C}}
\newcommand\QQ{\mathbb{Q}}
\newcommand\ZZ{\mathbb{Z}}

\newcommand{\cP}{\mathcal{P}}
\newcommand{\cR}{\mathcal{R}}
\newcommand{\cL}{\mathcal{L}}

\newcommand\Par{\mathrm{Par}}
\newcommand\arm{\mathrm{arm}}
\newcommand\coarm{\mathrm{coarm}}
\newcommand\leg{\mathrm{leg}}
\newcommand\coleg{\mathrm{coleg}}
\newcommand\Leg{\mathrm{Leg}}
\newcommand\SSYT{\mathrm{SSYT}}
\newcommand{\Exp}{\mathrm{Exp}}
\newcommand{\ch}{\mathrm{ch}}
\newcommand{\Left}{\mathrm{Left}}
\newcommand{\maj}{\mathrm{maj}}
\newcommand{\qbinom}[2]{\genfrac{[}{]}{0pt}{}{#1}{#2}}

\title[Hall--Littlewood $t$--$q$ transition coefficients]{Changing parameters in Hall-Littlewood functions}
\author{Aritra Bhattacharya}
\address{School of Mathematics and Statistics, Beijing Institute of Technology, Beijing 102400, China} 
\email{matharitra@gmail.com}

\author{Ziyi Dai}
\address{Department of Mathematics, Peking University, Beijing, 100871, China.}
\email{1909506735@qq.com}

\author{Qilin Hou}
\address{Department of Mathematics, Southern University of Science and Technology, Shenzhen, 518055, China.}
\email{houql2023@mail.sustech.edu.cn}

\author{Edward Tong}
\address{Department of Mathematics, University of Michigan, Ann Arbor, MI 48109, USA}
\email{edtong@umich.edu}

\date{\today}

\begin{document}
	
	\begin{abstract}
		We study the transition coefficients $b_{\lambda,\mu}(q,t)$ of the Hall--Littlewood $P$-functions $P_\lambda(t)$ expanded in the basis $P_\mu(q)$. We find a tableaux formula when $\lambda$ is a hook shape which generalizes term-by-term Lascoux-Schutzenberger's charge formula as well as Macdonald's monomial expansion formula for Hall-Littlewood functions. We conjecture that $b_{\lambda,\mu}(t+1,t)\in\mathbb{Z}_{\ge0}[t]$ and prove it when $\lambda$ is a hook shape, when $\lambda$ has two-columns and some more simple cases. We prove that $b_{\lambda,\mu}(t+n,n)\in\mathbb{Z}_{\ge0}[t]$ for every $n\in\mathbb{Z}_{\ge0}$, extending work of Alexandersson, and we record a stability property and some reduction principles. 
	\end{abstract}
	
	\subjclass[2020]{05E05 (Primary), 05A30 (Secondary)}
	\keywords{Hall--Littlewood functions, Kostka--Foulkes polynomials, charge, symmetric functions, Macdonald polynomials}
	
	\maketitle
	
	\section{Introduction}\label{sec:intro}
	
	The Hall--Littlewood $P$-functions $P_\lambda(u)$ are a one-parameter family of symmetric functions depending on a parameter $u$ and indexed by integer partitions $\lambda$. These functions interpolate between two important bases of the space of symmetric functions: the Schur functions $s_\lambda = P_\lambda(0)$ and the monomial symmetric functions $m_\lambda = P_\lambda(1)$.
	
	The transition matrix coefficients expressing the Schur functions in terms of the monomial symmetric functions are the Kostka numbers $K_{\lambda,\mu}$,
	\begin{align}\label{eq:stom}
		s_\lambda = \sum_{\mu}^{} K_{\lambda,\mu} m_\mu.
	\end{align}
	This equation is a direct shadow of the weight space decomposition of the irreducible $GL_n(\CC)$ representations (for large enough $n$), and Kostka numbers are the dimensions of the weight spaces of irreducible $GL_n(\CC)$-representations. There are many combinatorial models which enumerate these Kostka numbers, one of the most popular being the semistandard Young tableaux: $K_{\lambda,\mu}$ is the number of semistandard Young tableaux of shape $\lambda$ and content $\mu$.
	
	A $u$-analogue of the above picture is obtained by considering the transition matrix coefficients from the Schur functions expanded in the Hall--Littlewood $P$-basis; the coefficients $K_{\lambda,\mu}(u)$ are called the Kostka--Foulkes polynomials.
	\begin{align}\label{eq:stoP}
		s_\lambda = \sum_{\mu}^{} K_{\lambda,\mu}(u) P_\mu(u).
	\end{align}
	Under the Satake isomorphism this equation is a shadow of the change of basis between the canonical and standard bases of the spherical Hecke algebra, and the polynomials $K_{\lambda,\mu}(u)$ are certain Kazhdan--Lusztig polynomials, encoding the Poincar\'e polynomials of certain intersection cohomology. From this standpoint $K_{\lambda,\mu}(u) \in \ZZ_{\geq 0}[u]$, polynomials with positive integral coefficients. Lascoux and Sch\"utzenberger developed a subtle statistic on semistandard Young tableaux, called charge and denoted $\ch$ in this paper, which serves to give a combinatorial interpretation of these coefficients:
	\begin{align}
		K_{\lambda,\mu}(u) = \sum_{T \in \SSYT(\lambda,\mu)}^{} u^{\ch(T)},
	\end{align} where $\SSYT(\lambda,\mu)$ denotes the set of semistandard Young tableaux of shape $\lambda$ and content $\mu$.
	
	In another direction, one can generalize \eqref{eq:stom} by changing the Schur function side to Hall--Littlewood $P$-functions; in this case Macdonald \cite{MacMainBook} has an expansion formula by summing over semistandard Young tableaux again:
	\begin{align}
		P_\lambda(u) = \sum_{\mu}^{} \bigg(\sum_{T \in \SSYT(\lambda,\mu)}^{} \psi_T(u)\bigg) m_\mu,
	\end{align} where $\psi_T(u) \in \ZZ[u]$ is explicitly described in \S\ref{sec:monexpform}.
	
	\subsection{The transition coefficients \texorpdfstring{$b_{\lambda,\mu}(q,t)$}{b(q,t)}}
	
	In view of the above it is desirable to study the transition matrix between two families of Hall--Littlewood $P$-functions, parametrized by different parameters. More specifically, consider $\Lambda = \Lambda_{\QQ(q,t)}$, the space of symmetric functions with coefficients from $\QQ(q,t)$. The families $(P_\lambda(t))_{\lambda \in \Par}$ and $(P_\lambda(q))_{\lambda \in \Par}$ are both $\QQ(q,t)$-bases of $\Lambda$, where $\Par$ denotes the set of integer partitions. We want to investigate properties of $b_{\lambda,\mu}(q,t)$ defined by
	\begin{align}
		P_\lambda(t) = \sum_{\mu \in \Par}^{} b_{\lambda,\mu}(q,t) P_\mu(q).
	\end{align}
	Writing the matrices $b(q,t) = (b_{\lambda,\mu}(q,t))_{\lambda,\mu \in \Par}$, $K(u) = (K_{\lambda,\mu}(u))_{\lambda,\mu \in \Par}$, we get
	\begin{align}
		b(q,t) = K(t)^{-1}K(q).
	\end{align} Carbonara \cite{Carbonara_inverseKFp} obtained a combinatorial formula for the entries of the inverse Kostka--Foulkes matrix, thus in principle we have a signed formula for $b_{\lambda,\mu}(q,t)$.
	
	By the above discussion
	\begin{align}
		b_{\lambda,\mu}(q,0) = \sum_{T \in \SSYT(\lambda,\mu)}^{} q^{\ch(T)} \qquad \hbox{ and } \qquad b_{\lambda,\mu}(1,t) = \sum_{T \in \SSYT(\lambda,\mu)}^{} \psi_T(t).
	\end{align}
	\subsection{A tableaux formula for hook shapes}
	
	It would be very interesting if one could find a formula that attaches a weight to tableaux in such a way that at $t=0$ and $q=1$ the weights specialize to the charge and Macdonald's monomial expansion weights term-by-term, respectively. We answer this question in the positive for the case when $\lambda$ is a hook shape and some more simple cases. We now describe our result for hook shape.
	
	We defer the definition of charge to \S\ref{sec:charge}; for now we only comment that given a tableau $T \in \SSYT(\lambda,\mu)$, for some partitions $\lambda,\mu$, each cell $x$ in the Young diagram of $\lambda$ gets assigned a weight $\ch_T(x)$ such that $\ch(T) = \sum_{x \in \lambda}^{} \ch_T(x)$.
	
	Suppose $\lambda = (n-k,1^k)$, $n-k > 1, k \geq 0$. For $T \in \SSYT(\lambda,\mu)$ we define $\psi_{T}(q,t) \in \ZZ[q,t]$ below.
	Let $$m = \begin{cases}
		T(1,2)-1, &\hbox{ if } T(1,2) \neq 1,
		\\
		\qquad 1, &\hbox{ otherwise.}
	\end{cases}$$
	Define \begin{align}
		\psi_{T}(x;q,t) = \begin{cases}
			(q^{\ch_T(x)}-t), &\hbox{ if } \coarm(x) > 0, \hbox{ and } T(\Left(x)) < T(x),
			\\
			\qquad q^{\ch_T(x)}, &\hbox{ otherwise},
		\end{cases}
	\end{align} where $\Left(x)$ denotes the box immediately on the left of $x$ in the same row.
	Let
	\begin{align}
		\psi_{T}(q,t) = [m]_t \cdot \prod_{x \in \lambda}^{} \psi_{T}(x;q,t),
	\end{align}
	where $[m]_t = 1+t+\ldots+t^{m-1}$ for $m \in \ZZ_{>0}$.
	Then $$\psi_{T}(q,0) = q^{\ch(T)}  \hbox{ and } \psi_{T}(1,t) = \psi_T(t).$$
For the unique tableau of shape $(1^n)$ let  $\psi_T(q,t) = 1$. Then we prove the following theorem.
	
	\begin{theorem}\label{thm:main}
		For $\lambda = (n-k,1^k)$, $n > k \geq 0$, and $\mu \in \Par$,
		\begin{align}\label{eq:thmmaineq}
			b_{\lambda,\mu}(q,t) = \sum_{T \in \SSYT(\lambda,\mu)}^{} \psi_{T}(q,t).
		\end{align}
	\end{theorem}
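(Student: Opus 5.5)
We argue by computing both sides of \eqref{eq:thmmaineq} explicitly as functions of $\mu$ and comparing them. The starting point is $b(q,t)=K(t)^{-1}K(q)$, or equivalently $b_{\lambda,\mu}(q,t)=\langle P_\lambda(t),Q_\mu(q)\rangle_q$ for the $q$-Hall inner product. For the hook $\lambda=(n-k,1^k)$ there is a closed form for $P_\lambda(t)$: since $\lambda$ has at most one part larger than $1$, $P_\lambda(t)$ is a short combination of the $q_r(t)=Q_{(r)}(t)$ times $e$'s. Concretely, we would use
\[
P_{(n-k,1^k)}(t)=\frac{1}{1-t}\sum_{j\ge 0}(-1)^j\,\bigl(\text{coefficient}\bigr)\,q_{n-k+j}(t)\,e_{k-j},
\]
obtained from the generating function $\sum_r q_r(t)z^r=\prod_i\frac{1-tx_iz}{1-x_iz}$ and the Pieri-type rule for $P_{(1^k)}=e_k$. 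An equivalent route is $P_{(n-k,1^k)}(t)=\sum_\nu K^{-1}_{\lambda\nu}(t)s_\nu$. Here $K(t)^{-1}$ restricted to rows indexed by hooks is explicit, because only hooks $\nu\le\lambda$ with $\ell(\nu)\ge\ell(\lambda)$ occur and Carbonara's formula degenerates to a few terms.

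Next we expand in the $P_\mu(q)$ basis. The factor $e_{k-j}=P_{(1^{k-j})}(q)$ is parameter-free. The factor $q_r(t)$ expands as $q_r(t)=\sum_{\mu\vdash r}\prod_i(\ldots)\,P_\mu(q)$, using $q_r(t)=\sum_\mu \frac{(t;q)\text{-type product}}{\ldots}$; we get this by plethystic substitution $q_r(t)=h_r[(1-t)X]$ written in terms of $Q_\mu(q)=\ldots$, which gives coefficients $\prod_{i}(t;\cdot)$-like products in $q,t$ attached to the multiplicities $m_i(\mu)$. Then the product $P_\mu(q)\,e_{s}$ is handled by Macdonald's Pieri rule for Hall--Littlewood functions, $P_\mu e_s=\sum \prod_i \qbinom{\nu'_i-\nu'_{i+1}}{\nu'_i-\mu'_i}_q P_\nu$. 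This yields an explicit (signed) formula for $b_{\lambda,\mu}(q,t)$ as a sum over vertical strips.

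On the tableaux side we analyse $\SSYT(\lambda,\mu)$ for a hook. Such a $T$ is determined by the set $S$ of entries in the first column below $(1,1)=1$, which is a $k$-subset of $\{2,\dots,\ell(\mu)\}$ with distinct entries; the arm holds the remaining multiset. Charge on a hook tableau is computable in closed form: the reading word is the column read bottom-up followed by the arm, and the standard subwords give $\ch_T(x)$ explicitly in terms of $S$ and $\mu$. Hence $\psi_T(q,t)$ is an explicit product: $[m]_t$ times factors $(q^{c}-t)$ over the first occurrence of each new value in the arm beyond $T(1,2)$, times $q$-powers. We then sum over the arm fillings for fixed $S$ and over $S$, organizing the sum by the vertical strip structure so that it matches the Pieri expansion term by term. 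In particular, $[m]_t$ should correspond to the $1/(1-t)$ normalization combined with the alternating sum over $j$.

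The main obstacle is this final matching, and specifically showing that the alternating sum over $j$ from the formula for $P_\lambda(t)$ telescopes into the positive-looking product $[m]_t\prod(q^{\ch}-t)$ with exactly the charge exponents. As a fallback we would prove \eqref{eq:thmmaineq} by induction on $k$ using the recursion $P_{(n-k,1^k)}(t)\,e_1$-type Pieri identities (valid in both $t$ and $q$). We would check that the tableau sum satisfies the same recursion by removing the largest entry of the column, and verify the base case $k=0$, $P_{(n)}(t)=q_n(t)/(1-t)$, directly.
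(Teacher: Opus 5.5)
\textbf{There is a genuine gap: the step you flag as ``the main obstacle'' is the whole content of the theorem, and the proposal does not address it.}

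The algebraic side is routine. Your signed formula is just the iterate of the two-term identity $P_{(n-k)}(t)e_k=P_{(n-k,1^k)}(t)+P_{(n-k+1,1^{k-1})}(t)$, with every coefficient equal to $1$. Expanding $P_{(n-k)}(t)$ by the single-row formula and applying the Pieri rule in $q$ gives a recursion for $b_{(n-k,1^k),\mu}$. With $\mathcal{L}_\mu(z)=\sum_k\sum_{T}\psi_T(q,t)z^k$ and $\mathcal{R}_\mu(z)$ the vertical-strip sum, the theorem is equivalent to $(1+z)\mathcal{L}_\mu(z)=\mathcal{R}_\mu(z)$. So telescoping the alternating sum is not the difficulty; what is missing is any control of $\psi_T$.

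The idea you need is the following. For $\mu\neq(1^n)$ one has $T(1,2)=1$, so $[m]_t=1$, and everything depends only on the column set $C$.
\begin{itemize}
\item The charge is $\ch(T)=n(\mu)-\ell k+\sum_{e\in C}(e-1)$.
\item Put $L=\ell(\mu)-m_1(\mu)$, $\alpha=\#C_{\le L}$ and $\beta=\#C_{>L}$. The $(q^{\ch_T(x)}-t)$ factors are exactly $\prod_{j=1}^{L-1}(q^j-t)$ times $\prod_{j=0}^{m_1-\beta-1}(q^{L-\alpha+j}-t)$.
\item The first product comes from the first row-occurrences of the repeated values $2,\dots,L$, which lie outside the first standard subword. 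The second comes from the singleton values in the row, whose charges form a consecutive run.
\end{itemize}
This turns $\mathcal{L}_\mu$ into a double $q$-binomial sum via $q$-counting of subsets. Meanwhile $\mathcal{R}_\mu$ factors column by column through the $q$-binomial theorem, except for the parts equal to $1$. After cancelling the common factor $q^{n(\mu)}(q^{-1}t;q^{-1})_{L-1}$, you are left with an identity between two explicit $q$-binomial sums, proved by induction on $m_1$ from matching $q$-Pascal recurrences.

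The case $\mu=(1^n)$ must be handled separately, since it is the only case where $[m]_t\neq 1$. There the recursion collapses to $C_{n,k}+C_{n,k-1}=\gamma_{n-k}{\qbinom{n}{k}}_q$ with $\gamma_a=\prod_{i=1}^{a-1}(q^i-t)$. Its solution is $C_{n,k}=\gamma_{n-k}\sum_{i=0}^{k}{\qbinom{n-1-i}{n-k-1}}_q t^i$, and this is matched by summing $[T(1,2)-1]_t$ over the possible first rows. Your guess that $[m]_t$ comes from the $1/(1-t)$ normalization does not reflect what actually happens.

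Your fallback has the right skeleton, and it is what the paper does, but as stated it would not close. For $k\ge 2$, an $e_1$-Pieri step from $(n-k,1^{k-1})$ produces a $P_{(n-k,2,1^{k-2})}(t)$ term. That is a non-hook for which you have no expansion. The recursion must instead be the $e_k$ identity against the single row, whose $P(q)$-expansion you know. Also, ``removing the largest entry of the column'' is not an argument that the tableau sum satisfies this recursion. Its right-hand side is a sum over all vertical strips of $\mu$ with $q$-binomial weights, not a one-box removal.

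Two side claims are also wrong:
\begin{itemize}
\item The Schur expansion of a hook $P_\lambda(t)$ is not supported on hooks. Since $K_{(3,1),(2,2)}(t)=t$, one gets $P_{(3,1)}(t)=s_{(3,1)}-t\,s_{(2,2)}+\cdots$, so your ``equivalent route'' via $K(t)^{-1}$ is unfounded.
\item The single-row coefficients $b_{(r),\mu}(q,t)=q^{n(\mu)-\binom{\ell}{2}}\prod_{i=1}^{\ell-1}(q^i-t)$ depend only on $\ell(\mu)$ and $n(\mu)$, not on the multiplicities $m_i(\mu)$.
\end{itemize}
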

	The proof of \autoref{thm:main} is given in \S\ref{sec:hook}. 
	When $\lambda$ is a single row, \eqref{eq:thmmaineq} simplifies to the following product formula (proved in \autoref{th:b_singlerow}):
	\begin{equation}\label{eq:b_singlerow_intro}
		b_{(n),\mu}(q,t) = q^{n(\mu)-\binom{\ell}{2}} \prod_{i=1}^{\ell-1}(q^i-t),
	\end{equation}
	where $\ell = \ell(\mu)$ and $n(\mu) = \sum_{i \geq 1}^{} (i-1)\mu_i$.
	
	In the final two sections \S\ref{sec:smallcases} and \S\ref{sec:nk1} we show that analogous formulas work when $\mu$ is of the form $(n,k,1)$ and small hooks $(n-2,1^2)$,$(n-3,1^3)$.
	
	\subsection{A positivity conjecture and further results}
	
	\autoref{thm:main} immediately implies that in the case that $\lambda$ is of hook shape, we have $b_{\lambda,\mu}(t+1,t) \in \ZZ_{\geq 0}[t]$. We conjecture that this is true for general $\lambda,\mu$, and have checked it for partitions of size at most $26$ in \texttt{SageMath}.
	\begin{conjecture}\label{conj:B}
		For $\lambda,\mu \in \Par$, $b_{\lambda,\mu}(t+1,t) \in \ZZ_{\geq 0}[t]$.
	\end{conjecture}
	At $t=0$, this is the positivity of Schur functions in the monomial basis. We note that there is at least a partial positivity even when $t=-1$: when $\lambda$ is a strict partition, i.e., no two parts are equal, then $P_\lambda(-1)$ expands positively in the Schur basis \cite[Ch. III, (8.17)]{MacMainBook}. The functions $P_\lambda(-1)$ are called Schur $P$-functions and they play an important role in the projective representation theory of symmetric groups. In a MathOverflow post \cite{MO190646} I. Makhlin conjectured that $P_\lambda(-t)$ is monomial positive when $\lambda_i - \lambda_{i+1} \geq 2$ for all $i<\ell(\lambda)$. R. Stanley conjectures in a response in that same post that in fact under those conditions $P_\lambda(-t)$ is Schur positive. A discussion of both our conjecture and Makhlin's conjecture can be found in the \href{https://www.symmetricfunctions.com/hallLittlewood.htm#hallLittlewoodP}{The symmetric functions catalogue} maintained by Alexandersson.
	
	In this paper we prove \autoref{conj:B} in a few more cases.
	\begin{theorem}\label{thm:twocol-intro}
		\autoref{conj:B} holds when $\lambda_1 \leq 2$, i.e., when $\lambda$ is a two-column shape.
	\end{theorem}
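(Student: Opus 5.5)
The plan is to work entirely with two-column shapes through the Pieri rule for $e_r$ in the Hall--Littlewood basis, which involves only $t$-binomial coefficients. If $\lambda_1\le 2$, then $P_\lambda(t)$ is triangular with respect to dominance, so $b_{\lambda,\mu}(q,t)\neq 0$ forces $\mu\le\lambda$. Hence $\mu_1\le 2$ as well. We may therefore write $\lambda=(2^a,1^b)$ and $\mu=(2^{a-j},1^{b+2j})$ with $0\le j\le a$.

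Macdonald's formula \cite[Ch.~III, (3.2)]{MacMainBook} reads
\begin{align*}
e_r P_\nu(u)=\sum_{\kappa}\prod_{i\ge1}\qbinom{\kappa'_i-\kappa'_{i+1}}{\kappa'_i-\nu'_i}_u P_\kappa(u),
\end{align*}
where the sum is over $\kappa$ with $\kappa/\nu$ a vertical $r$-strip. Applying it to $e_{a+b}e_a=e_a P_{(1^{a+b})}(u)$ gives
\begin{align*}
e_{a+b}\,e_a=\sum_{j=0}^{a}\qbinom{b+2j}{j}_u P_{(2^{a-j},1^{b+2j})}(u).
\end{align*}
This holds for every $u$. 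The family $\{e_{N-r}e_r\}$, with $N=2a+b$ fixed, is therefore related to the two-column Hall--Littlewood functions of size $N$ by a unitriangular matrix $A(u)$ whose entries are the Gaussian binomials $\qbinom{N-2i+2j}{j}_u$, indexed by the number of rows of length $2$.

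It follows that the two-column block of $b(q,t)$ equals $A(t)^{-1}A(q)$.

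The first step is to invert $A(t)$ explicitly. This is a Gaussian-binomial inversion of the same type as $q$-Chebyshev/Gould inversion. I expect the entries of $A(t)^{-1}$ to be of the form $(-1)^j t^{\binom{j}{2}}$ times a ratio of $t$-integers. I will guess the exact form from small cases and verify it with the $q$-Chu--Vandermonde identity.

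The second step is to compute $\sum_k (A(t)^{-1})_{a,k}A(q)_{k,a-j}$ in closed form. For $j=1$ the answer is visible directly: one gets $b_{(2,1^{b}),(1^{b+2})}=[b+2]_q-[b+2]_t$. At $q=t+1$ this equals $\sum_{i}\big((t+1)^i-t^i\big)$, which has nonnegative coefficients. I will aim for a closed formula in the spirit of \eqref{eq:b_singlerow_intro}. Concretely, I expect a power of $q$ times a $q$-binomial-type factor times a product of factors $q^i-t$, or possibly a positive combination of terms of the form $[m]_q\prod(q^i-t)$.

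Positivity then follows termwise. The reason is that $q^i-t$ specializes to $(t+1)^i-t$, and $[m]_q$ and $q^c$ specialize to polynomials with nonnegative coefficients, so every such factor lies in $\ZZ_{\ge0}[t]$.

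The main obstacle is the second step. The inverse matrix carries signs, so the sum is alternating. One needs a summation identity that collapses it into a manifestly positive (at $q=t+1$) expression, or at least into a telescoping sum of differences $g(q)-g(t)$ in which each $g$ has nonnegative coefficients. The reason differences would suffice is that if $g\in\ZZ_{\ge0}[u]$, then $g(t+1)-g(t)$ has nonnegative coefficients.

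My first attempt will be to organise the sum as a difference of two $q$-Vandermonde-type sums. If that fails, I will fall back on induction on $a$. This uses the recursion obtained by multiplying by $e_1$, namely $e_1P_\nu=\sum [\,\cdot\,]_u P_\kappa$, together with the known column case $P_{(1^n)}=e_n$ to control the base.
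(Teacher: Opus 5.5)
\paragraph{Verdict: the setup is right, but the positivity step is missing.}
Your setup is sound and matches the first half of the paper's argument. The Pieri expansion $e_{a+b}e_a=\sum_{j}\qbinom{b+2j}{j}_uP_{(2^{a-j},1^{b+2j})}(u)$ is correct, and so is the identity $b=A(t)^{-1}A(q)$ on the two-column block. The row of $A(t)^{-1}$ you intend to guess is exactly the expansion of $P_{(2^r,1^s)}(t)$ in the products $e_je_{N-j}$ that the paper imports from \cite{KRTW_remarks}. The $(1^N)$-column of $A(q)$ is $\qbinom{N}{k}_q$, and \autoref{prop:reductionprinc} lets you take $\mu=(1^N)$. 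So you arrive at the same alternating sum the paper starts from.

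That part is bookkeeping. The content of the theorem is positivity of this alternating sum at $q=t+1$, and there your proposal only lists hopes. The concrete ones already fail in the first nontrivial case:
\[
b_{(2,2),(1^4)}(q,t)=q^4+q^2-qt-q^2t-q^3t+t^3=(q-t)(q^3+q-qt-t^2).
\]
The second factor is irreducible: its discriminant in $t$ is $q(4q^2+q+4)$, which is not a square. So no product of a power of $q$, a $q$-binomial-type factor and factors $q^i-t$ can give it. It is not even a $\ZZ_{\geq0}[q,t]$-combination of products $\prod_i(q^{a_i}-t)$ with $a_i\geq1$:
\begin{enumerate}
\item We have $b(1,t)=(1-t)^2(2+t)$, and a nonzero polynomial with nonnegative coefficients does not vanish at $q=t=1$. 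Hence every product in such a combination needs at least two factors.
\item Then the degree-two part $q^2-qt$ of $b$ would have to be a nonnegative combination of $(q-t)^2$, $t^2-qt$ and $t^2$. Matching $q^2$ forces coefficient $1$ on $(q-t)^2$, and then the $qt$-coefficient is at most $-2$, not $-1$.
\end{enumerate}
Your other two ideas, the ``telescoping $g(q)-g(t)$'' decomposition and induction via $e_1$, come with no mechanism that produces positivity at $q=t+1$.

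\paragraph{What the paper does instead.}
The missing idea is a way to exploit cancellation between consecutive terms. The paper proceeds in four steps.
\begin{enumerate}
\item It telescopes the $t$-side coefficients as $F_j=G_j-G_{j+1}$ with $G_j=(-1)^{r-j}t^{\binom{r-j+1}{2}}\qbinom{s+r-j}{r-j}_t$. This converts the $q$-side into the ballot-type differences $\qbinom{N}{j}_q-\qbinom{N}{j-1}_q$ (\autoref{lem:twocolumn}).
\item It pairs the terms $j$ and $j-1$ and splits off the manifestly positive piece $t^m\qbinom{s+m}{m}_t\qbinom{N}{j-2}_q$.
\item It expands the remainder as a sum over subsets $S$.
\item This reduces everything to $T(N,j,E)|_{q=t+1}\in\ZZ_{\geq0}[t]$ (\autoref{lem:Tpos}).
\end{enumerate}
The last step is where the specialization $q=t+1$ is actually used, via two facts:
\begin{itemize}
\item $\qbinom{N}{j}_q-q^{N-2j+1}\qbinom{N}{j-1}_q$ is a $q$-ballot number with nonnegative coefficients. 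Up to a factor $q^{-j}$, it is the $\maj$ generating function of standard tableaux of shape $(N-j,j)$.
\item $(1+t)^{N-2j+1}$ dominates $1+\sum_{e\in E}t^e$ coefficientwise whenever $E\subseteq[1,N-2j+1]$.
\end{itemize}
Your step two needs an argument of this kind, exploiting cancellation between consecutive terms, rather than a closed form.
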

	The proof is given in \S\ref{sec:twocol}: an explicit alternating formula involving $q$-binomial coefficients for $b_{(2^a1^b),(1^N)}(q,t)$ (\autoref{lem:twocolumn}) is paired up term-by-term and shown to be positive at $q=t+1$. We have not found a tableaux formula in this case.
	
	\subsection{Positivity for $(q,t)=(t+n,n)$}
	
	Using the Hall scalar product, which is characterized by the Schur functions being orthonormal, taking dual bases gives
	\begin{align}
		Q'_\mu(q) = \sum_{\lambda}^{} b_{\lambda,\mu}(q,t) Q'_\lambda(t),
	\end{align}
	where $Q'_\lambda(u)$ denotes the transformed Hall--Littlewood function.
	
	From this perspective, Alexandersson \cite{Alexandersson_LLTelementarymelting} proved that $b_{\lambda,\mu}(t+1,1) \in \ZZ_{\geq 0}[t]$, or equivalently, $Q'_\mu(t+1)$ is $h$-positive, where $h$ denotes the complete homogeneous symmetric function basis. After applying the $\omega$ involution, these are a part of a family of symmetric functions known as vertical strip LLT functions, and Alexandersson's result also extends to this family. We mimic the argument from \cite{Alexandersson_LLTelementarymelting} to prove 
	\begin{theorem}\label{thm:tplusn}
		Let $\lambda,\mu \in \Par$ and $n \in \ZZ_{\geq 0}$. Then $b_{\lambda,\mu}(t+n,n) \in \ZZ_{\geq 0}[t].$
	\end{theorem}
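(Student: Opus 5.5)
The plan is to work on the dual side and reduce the claim to a positivity statement for transformed Hall--Littlewood functions. By the duality identity above, $Q'_\mu(q)=\sum_\lambda b_{\lambda,\mu}(q,t)\,Q'_\lambda(t)$. Setting $(q,t)=(t+n,n)$, \autoref{thm:tplusn} becomes:
\begin{align}
Q'_\mu(t+n)=\sum_\lambda b_{\lambda,\mu}(t+n,n)\,Q'_\lambda(n),\qquad b_{\lambda,\mu}(t+n,n)\in\ZZ_{\ge0}[t].
\end{align}
Since $Q'_\lambda(1)=h_\lambda$, the case $n=1$ is exactly Alexandersson's $h$-positivity of $Q'_\mu(t+1)$. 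For $n=0$ the functions $Q'_\lambda(0)$ are the Schur functions, so the claim is the charge positivity $b_{\lambda,\mu}(t,0)=K_{\lambda,\mu}(t)$. I will therefore follow the structure of \cite{Alexandersson_LLTelementarymelting}, replacing the target basis $h_\lambda$ by the basis $Q'_\lambda(n)$.

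Step 1 is to apply $\omega$ and write $\omega Q'_\mu(q)$ as a vertical-strip LLT function. This means a sum over fillings (or orientations) of a fixed diagram $D_\mu$ of columns, weighted by $q^{\mathrm{inv}}$. Here $\mathrm{inv}$ counts attacking pairs of cells in a prescribed inversion relation, each such pair being an edge of an ``inversion graph''. In Step 2 I substitute $q=t+n$ and expand $q^{\mathrm{inv}}=\prod_{e}(t+n)$ over the inversion edges occurring in each filling. For every filling I choose, edge by edge, either the factor $t$ or the factor $n$. Exchanging the order of summation gives
\begin{align}
\omega Q'_\mu(t+n)=\sum_{S}t^{|S|}\,F_S(n).
\end{align}
Here $S$ runs over the sets of edges that take the factor $t$, and $F_S(n)$ collects the remaining weights $n^{\#\{\text{edges not in }S\}}$. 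Each $F_S(n)$ should be an LLT-type generating function for a modified (``melted'') diagram in which the edges of $S$ have been frozen.

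Step 3 is to show that each $F_S(n)$ is a nonnegative integer combination of the functions $\omega Q'_\lambda(n)$. The natural route is Alexandersson's melting operation, which successively merges adjacent strips of the diagram. I would try to prove that it is compatible with the parameter $n$: each melting step rewrites an LLT diagram with edge weight $n$ as a sum, with coefficients in $\ZZ_{\ge0}$ (or in $\ZZ_{\ge0}[t]$ after tracking $t$), of diagrams of fewer strips. The terminal diagrams should be single-column configurations whose $n$-weighted LLT function is exactly $\omega Q'_\lambda(n)$. As a sanity check I will compare with the single-row formula \eqref{eq:b_singlerow_intro}: at $(q,t)=(t+n,n)$ each factor becomes $(t+n)^i-n$, which is visibly in $\ZZ_{\ge0}[t]$ because the constant term $n^i-n\ge 0$ for $n\in\ZZ_{\ge0}$. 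This indicates how the $n$-choices have to cancel against the subtraction in the formula.

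The main obstacle is Step 3. For $n=1$ the weight-$1$ edges simply disappear, and the resulting diagrams factor into products of $h$'s (after $\omega$, of $e$'s). For general $n$ the unchosen edges still carry weight $n$, so the terminal objects are genuine $n$-weighted LLT functions of the columns of $\lambda$. One must then recognise these as $\omega Q'_\lambda(n)$ rather than as arbitrary products. My first attempt is an induction on the number of melting steps using the LLT cyclic/unicellular relations. If that fails, I would work at the matrix level with $b(t+n,n)=b(t+n,n')\,b(n',n)$ and a suitable positive recursion in $n$.
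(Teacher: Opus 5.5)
Your reformulation $Q'_\mu(t+n)=\sum_\lambda b_{\lambda,\mu}(t+n,n)\,Q'_\lambda(n)$ is correct. The gap is Step 3, which carries the entire content of the theorem; the proposal gives no mechanism for it. Worse, in the termwise form you state it, Step 3 is false, because the pieces $F_S(n)$ are in general not even symmetric functions.

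Take $\mu=(1^3)$. Then
\[
\omega Q'_{(1^3)}(q)=s_3+(q+q^2)s_{21}+q^3s_{111}=\sum_\sigma q^{\mathrm{inv}(\sigma)}x_{\sigma_1}x_{\sigma_2}x_{\sigma_3},
\]
summed over all words $\sigma=\sigma_1\sigma_2\sigma_3$ of positive integers. This is the unicellular LLT polynomial of the triangle, with inversion edges $12,13,23$. For $S=\{12,13\}$ your expansion gives
\[
F_S(n)=\sum_{\sigma_1>\sigma_2,\ \sigma_1>\sigma_3} n^{[\sigma_2>\sigma_3]}\,x_{\sigma_1}x_{\sigma_2}x_{\sigma_3},
\]
where the exponent is $1$ if $\sigma_2>\sigma_3$ and $0$ otherwise. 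In this series $x_1^2x_2$ has coefficient $1$ but $x_1x_2^2$ has coefficient $0$. Only the sum $F_{\{12,13\}}+F_{\{13,23\}}=s_{21}+2n\,s_{111}$ is symmetric.

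So the $F_S(n)$ are not LLT functions of ``melted'' diagrams, and they cannot be combinations of the $\omega Q'_\lambda(n)$. You would need a regrouping of the sets $S$, together with a reason why the regrouped pieces are $Q'(n)$-positive. Even at $n=1$, $e$-positivity of general vertical-strip LLT polynomials under $q\mapsto q+1$ is a hard theorem. The fallback $b(t+n,n)=b(t+n,n')\,b(n',n)$ does not help either. For $n'\neq n$ one of the two factors has entries outside $\ZZ_{\ge0}[t]$: for example $b_{(2),(1^2)}(q,t)=q-t$ becomes $t+n-n'$ or $n'-n$.

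The missing idea is to do your ``choose $t$ or $n$'' expansion on the raising-operator side, where each term is a single $Q'_{\mu+\nu}(t)$ rather than a non-symmetric piece. The basis $Q'_\lambda(n)$ then enters through straightening, not through LLT combinatorics; this is what the paper does.

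1. In \eqref{eq:Qp=hraisingop} write $\frac{1-x^\alpha}{1-qx^\alpha}=\frac{1-tx^\alpha}{1-qx^\alpha}\cdot\frac{1-x^\alpha}{1-tx^\alpha}$, and expand $\frac{1-tx^\alpha}{1-qx^\alpha}=1+(q-t)\sum_{k\ge1}q^{k-1}x^{k\alpha}$.
2. This gives $Q'_\mu(q)=\sum_\nu\cP(\nu;q,t)\,Q'_{\mu+\nu}(t)$, summed over $\nu$ in the positive root cone. Here $Q'$ of a composition is defined by the same raising-operator formula.
3. At $(q,t)=(t+n,n)$ the weights become $\cP(\nu;t+n,n)=\sum_\kappa t^{\#\kappa}(t+n)^{|\kappa|-\#\kappa}\in\ZZ_{\ge0}[t]$, summed over Kostant partitions $\kappa$ of $\nu$.
4. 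By \autoref{prop:Qpstraightening}, at $t=n\ge1$ the straightening coefficients $n$, $n^{i+1}-n^{i-1}$ and $n^m-n^{m-1}$ are nonnegative integers. Hence each $Q'_{\mu+\nu}(n)$ is a $\ZZ_{\ge0}$-combination of the $Q'_\lambda(n)$ with $\lambda\in\Par$.

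This proves the theorem for $n\ge1$, and $n=0$ is charge, as you noted. For $n=1$ no straightening is needed, since $Q'_\beta(1)=h_\beta$ (zero if $\beta$ has a negative entry). This raising-operator computation is the argument the paper adapts from Alexandersson; the LLT melting machinery is not needed.
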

	
	In \cite{AlexanderssonSulzgruber} a combinatorial formula for $b_{\lambda,\mu}(t+1,1)$ is given by Alexandersson and Sulzgruber; it would be interesting if it can be generalized to $b_{\lambda,\mu}(t+n,n)$.
	
	Finally, we give a formula for $Q'_{(n,k,1)}(q)$ in the $Q'(t)$-basis in \autoref{thm:nk1}, and this gives tableaux formulas analogous to the hook case for $b_{\lambda,(n,k,1)}(q,t)$.
	
	\subsection{A positivity philosophy}\label{sec:macsquare}
	
	This subsection is only for some more motivation regarding \autoref{conj:B}.
	The Hall--Littlewood $P$-functions have a two-parameter generalization developed by Macdonald, the Macdonald $P$-functions $(P_\lambda(q,t))_{\lambda \in \Par}$, which form a basis of $\Lambda_{\QQ(q,t)}$. Thinking of $q$ and $t$ as real variables in the co-ordinate plane with $x=q,y=t$, we can picture specializations of Macdonald polynomials in the Macdonald square, pictured in \autoref{fig:macsquare}, which records the specializations
	\begin{align*}
		P_\lambda(0,t) = P_\lambda(t), \qquad P_\lambda(q,0) = W_\lambda(q), \qquad P_\lambda(1,t) = e_{\lambda'} \qquad \hbox{ and } \qquad P_\lambda(q,1) = m_\lambda,
	\end{align*}
	where $W_\lambda(q) = \omega(Q'_{\lambda'}(q))$ are the $q$-Whittaker functions. 
	
	\begin{figure}[h]
		\centering
		\begin{tikzcd}[row sep=2.25em, execute at end picture={
				\draw[color={rgb,255:red,92;green,92;blue,214}, dashed, thick, ->, shorten <=2pt, shorten >=2pt] (2) -- (2 -| 4);
				\draw[color={rgb,255:red,92;green,92;blue,214}, dashed, thick, ->, shorten <=2pt, shorten >=2pt] (1 |- 5) -- (1);
			}]
			{\tiny m_{\lambda}} &&&&& {\tiny P^{(\alpha)}_\lambda} \\
			\\
			\\
			\\
			\\
			{\tiny s_\lambda} &&&&& {\tiny e_{\lambda'}}
			\arrow[""{name=0, anchor=center, inner sep=0}, "{\tiny s_\lambda}"{description, pos=0.3}, color={rgb,255:red,128;green,128;blue,128}, no head, from=6-1, to=1-6]
			\arrow[""{name=1, anchor=center, inner sep=0}, "{\tiny t=q^k}"{description, pos=0.2}, color={rgb,255:red,128;green,128;blue,128}, curve={height=50pt}, no head, from=6-1, to=1-6]
			\arrow[""{name=2, anchor=center, inner sep=0, pos=0.44}, "{\tiny q=t^k}"{description, pos=0.2}, color={rgb,255:red,128;green,128;blue,128}, curve={height=-50pt}, no head, from=6-1, to=1-6]
			\arrow[""{name=3, anchor=center, inner sep=0}, "{\tiny m_\lambda}", color={rgb,255:red,128;green,128;blue,128}, no head, from=1-1, to=1-6]
			\arrow[""{name=4, anchor=center, inner sep=0, pos=0.38}, "{\tiny P_\lambda(t)}"', color={rgb,255:red,128;green,128;blue,128}, no head, from=1-1, to=6-1]
			\arrow[""{name=5, anchor=center, inner sep=0, pos=0.62}, "{\tiny W_\lambda(q)}"', color={rgb,255:red,128;green,128;blue,128}, no head, from=6-1, to=6-6]
			\arrow[""{name=6, anchor=center, inner sep=0}, "{\tiny e_{\lambda'}}"', color={rgb,255:red,128;green,128;blue,128}, no head, from=6-6, to=1-6]
			\arrow[shorten <=8pt, shorten >=8pt, from=6, to=0]
			\arrow[shorten <=8pt, shorten >=8pt, from=0, to=4]
			\arrow[shorten <=5pt, shorten >=5pt, from=0, to=3]
			\arrow[shorten <=9pt, shorten >=9pt, from=2, to=3]
			\arrow["{\tiny H'}", color={rgb,255:red,214;green,92;blue,92}, shorten <=2pt, shorten >=2pt, dashed, thick, from=0, to=2]
			\arrow[shorten <=5pt, shorten >=5pt, from=5, to=0]
			\arrow["{\tiny H}", color={rgb,255:red,214;green,92;blue,92}, shorten <=2pt, shorten >=2pt, dashed, thick, from=1, to=0]
			\arrow[shorten <=12pt, shorten >=12pt, from=6, to=1]
		\end{tikzcd}
		\caption{The Macdonald square. The filled arrows are the known positivity statements; the red dashed arrows are Haglund's conjecture ($H$) and its dual ($H'$); the blue dashed arrows are \autoref{conj:ownconj} together with its dual version.}
		\label{fig:macsquare}
	\end{figure}
	
	The filled arrows are the known positivity statements, and each records a basis expanding positively in the basis at its tip. The red dashed arrows indicate Haglund's conjecture ($H$) which asserts that for $k \in \ZZ_{\geq 1},$
	\begin{align*}
		\dfrac{J_\lambda(q,q^k)}{(1-q)^{|\lambda|}} \qquad \hbox{is Schur positive,}
	\end{align*} and ($H'$) is its equivalent dual (\cite{Bhattacharya22}).
	Here $J_\lambda$ denotes the integral form Macdonald polynomials. The blue dashed arrows indicate the following conjecture along with its dual version.
	\begin{conjecture}\label{conj:ownconj}{\textup{\cite[Conjecture 5]{Bhattacharya_Thesis}}}
		The normalized integral form Macdonald polynomials expand positively in the Hall--Littlewood $P$ basis: for $k \in \ZZ_{\geq 1}$, $$\dfrac{{J_{\lambda}(t^k,t)}}{(1-t)^{|\lambda|}} \in \mathbb{Z}_{\geq 0}[t]\{ P_\mu(t) : \mu \in \Par \}.$$
	\end{conjecture}
	We have verified \autoref{conj:ownconj} in \texttt{SageMath} for $|\lambda| \leq 14$ and $1 \leq k \leq 10$. In this paper we don't make further comments on it.
	These conjectures, along with the known statements, serve to illustrate the philosophy that in the first quadrant of the $(q,t)$-plane, one obtains positivity statements as one moves left and upwards.
	
	In fact, Alexandersson \cite{Alexandersson_LLTelementarymelting} proved that $W_\lambda(q+1)$ expands positively in the elementary basis, and we prove in this paper (\autoref{thm:tplusn}) that $P_\lambda(t)$ expands positively in the $P(q)$-basis along $(q,t) = (t+n,n)$ for every $n \in \ZZ_{\geq 0}$, which, using Macdonald duality (\cite[Ch. VI, (5.1)]{MacMainBook}) implies that $W_\lambda(t+n)$ is $W_\lambda(n)$-positive. \autoref{conj:B} is another statement in this regard, which says that there is positivity as one moves upward along the positive part of $q=0$ axis, or dually, leftwards along the positive part of $t=0$ axis.
	
	\subsection*{Organization of the paper}
	In \S\ref{sec:prelims} we recall preliminaries, especially about Hall--Littlewood functions. In \S\ref{sec:basic} we prove some properties of $b_{\lambda,\mu}(q,t)$, and prove positivity for $(q,t)=(t+n,n)$ for $n \in \ZZ_{\geq 0}$ (\autoref{thm:tplusn}). In \S\ref{sec:stability} we prove a stability property and record some reduction principles, which simplifies computations in practice. In \S\ref{sec:singlerow} we compute $b_{\lambda,\mu}(q,t)$ when $\lambda$ is a single row and also give a plethystic expression for the case when $\mu$ is a single column. In \S\ref{sec:hook} we prove \autoref{thm:main}, which gives a tableaux formula for $b_{\lambda,\mu}(q,t)$ when $\lambda$ is a hook. In \S\ref{sec:twocol} we prove \autoref{conj:B} for two-column shapes (\autoref{thm:twocol-intro}). In \S\ref{sec:smallcases} we record some small cases and coefficient data, with more tableaux formulas. In \S\ref{sec:nk1} we give an expansion of $Q'_{(n,k,1)}(q)$ in the $Q'(t)$-basis (\autoref{thm:nk1}) which can also be viewed as a tableaux formula. We collect open questions in \S\ref{sec:open}.
	
	\section{Preliminaries}\label{sec:prelims}

In this section we recall known results about $q$-Pochhammer symbols and $q$-binomial coefficients, symmetric functions and Hall-Littlewood functions, including their monomial expansion formula and Kostka-Foulkes polynomials. 
	
		\subsection{$q$-binomial coefficients}\label{sec:qbin}
	
	For $n \in \ZZ_{\geq 0}$, the \textit{$q$-Pochhammer symbols} are
	\begin{align*}
		(a;q)_n = \prod_{i=0}^{n-1} (1-aq^i)
	\end{align*} with the convention that the empty product is $1$, so $(a;q)_0 = 1$. The \textit{$q$-binomial coefficients} are defined for $n \in \ZZ_{\geq 0}$ and $k \in [0,n]$ by \begin{align*}
		{\qbinom{n}{k}}_q = \dfrac{(q;q)_n}{(q;q)_{n-k}(q;q)_k}.
	\end{align*} For other cases ${{\qbinom{n}{k}}}_q =0$. In particular, the \textit{$q$-integer} $[m]_q = {m \brack 1}_q = 1+\ldots + q^{m-1}$ for $m \geq 1$. It turns out that ${\qbinom{n}{k}}_q \in \ZZ_{\geq 0}[q]$. They satisfy the following \textit{$q$-Pascal identity}:
	\begin{align}\label{eq:qPascal}
		{\qbinom{n}{k}}_q = {\qbinom{n-1}{k-1}}_q + q^{k} {\qbinom{n-1}{k}}_q.
	\end{align}
	%	Inverting $q$ in the definition gives
	%	\begin{align}\label{eq:qbinrev}
		%		{\qbinom{n}{k}}_{q^{-1}} = q^{\binom{n-k}{2}+\binom{k}{2} - \binom{n}{2}} {\qbinom{n}{k}}_q = q^{-k(n-k)}{\qbinom{n}{k}}_q.
		%	\end{align}
	%	The $q$-binomial theorem is
	%	\begin{align}\label{eq:qbin}
		%		\prod_{j=0}^{n-1} (1+q^j z) = \sum_{k \geq 0}^{} q^{\binom{k}{2}} {\qbinom{n}{k}}_q z^k,
		%	\end{align}
	%	and inverting $q$,
	%	\begin{align}\label{eq:qbininv}
		%		\prod_{j=a}^{b-1} (1+q^{-j} z) = \sum_{k \geq 0}^{} q^{\binom{k+1}{2}-bk} {\qbinom{b-a}{k}}_q z^k.
		%	\end{align}
	We will use the following formula that $q$-counts subsets:
	\begin{align}\label{eq:qcountingsubsets}
		\sum_{S \in \binom{[n]}{k}}^{} q^{\sum_{s \in S} s} = q^{\binom{k+1}{2}} {\qbinom{n}{k}}_q.
	\end{align}
	%	We also write $[m]_t = 1 + t + \cdots + t^{m-1}$ for the $t$-analogue of $m \in \ZZ_{\geq 0}$.

	\subsection{Partitions}\label{sec:partitions}
	A \textit{partition} is a sequence $\lambda = (\lambda_1,\lambda_2,\ldots)$ of non-negative integers which is weakly decreasing and has a finite sum. Let $\Par$ denote the set of all partitions. A partition $\lambda$ may be realized as a set of boxes, upper-left justified and arranged in rows so that there are $\lambda_i$ boxes in row $i$; this is called the \textit{Young diagram} of $\lambda$. For a partition $\lambda$ the \textit{conjugate partition} $\lambda'$ is the partition obtained by interchanging the rows and columns in the diagram of $\lambda$. The \textit{length} of a partition $\lambda$ is $\ell(\lambda) = \lambda'_1$ and the \textit{size} is $|\lambda|=\sum \lambda_i$. We assign coordinates to the boxes of the diagram so that $(i,j)$ denotes the box in the $i$th row from the top and the $j$th column from the left.
	
	The set of partitions is partially ordered by the \textit{dominance order} defined by \begin{align}
		\lambda \geq \mu \iff  \lambda_1+\cdots+\lambda_i \geq \mu_1+\cdots+\mu_i \ \forall \ i\geq 1, \text{ and } \ |\lambda|=|\mu| .
	\end{align}
	
	For a box $x=(r,c)$ in the diagram of $\lambda$, where $r,c$ are the row and column numbers of $x$, we write $x\in \lambda$ and define the arm length $\arm(x)$, coarm length $\coarm(x)$, leg length $\leg(x)$ and coleg length $\coleg(x)$ of $x$ by:
	\begin{align}
		\arm(x) = \lambda_r - c &, \quad \coarm(x) = c-1, \\
		\leg(x) = \lambda'_c - r &, \quad \coleg(x) = r-1.
	\end{align}
	
	\begin{figure}[h]
		\centering
		\begin{minipage}{0.6\textwidth}
			\centering
			\begin{tikzpicture}[yscale=-0.6, xscale=0.6]
				\draw (0,0) -- (8,0);
				\draw (0,1) -- (8,1);
				\draw (0,2) -- (7,2);
				\draw (0,3) -- (4,3);
				\draw (0,4) -- (4,4);
				\draw (0,5) -- (1,5);
				
				\draw (0,0) -- (0,5);
				\draw (1,0) -- (1,5);
				\draw (2,0) -- (2,4);
				\draw (3,0) -- (3,4);
				\draw (4,0) -- (4,4);
				\draw (5,0) -- (5,2);
				\draw (6,0) -- (6,2);
				\draw (7,0) -- (7,2);
				\draw (8,0) -- (8,1);
				
				\fill[gray!20] (3,1) rectangle (4,2);
				\draw[<->, thick, blue] (4,1.5) -- (7,1.5);
				\draw[<->, thick, red] (3,1.5) -- (0,1.5);
				\draw[<->, thick, green!60!black] (3.5,1) -- (3.5,0);
				\draw[<->, thick, purple] (3.5,2) -- (3.5,4);
				\draw (3.5,1.5) node {$\mathbf{x}$};
			\end{tikzpicture}
		\end{minipage}%
		\begin{minipage}{0.3\textwidth}
			\centering
			\begin{align*}
				\lambda= (8, 7, 4, 4, 1), x= (2,4)
				\\
				{\color{blue}\arm}(x)= 3, {\color{red}\coarm}(x)= 3,
				\\
				{\color{purple}\leg}(x)= 2, {\color{green!60!black}\coleg}(x)= 1.
			\end{align*}
		\end{minipage}
		\caption{Young diagram for $\lambda = (8, 7, 4, 4, 1)$ and arm, coarm, leg, coleg lengths of $x = (2,4)$}
		\label{fig:Youngdiagram}
	\end{figure}
	
	We will also frequently use, for $i \geq 1$, \begin{equation}
		m_i(\lambda) = \# \{ j \geq 1 \,| \, \lambda_j = i \},
	\end{equation} the multiplicity of $i$ in $\lambda$, and the statistic
	\begin{align}\label{eq:nlambda}
		n(\lambda) = \sum_{i \geq 1}^{} (i-1)\lambda_i,
	\end{align}
	so in particular $n(1^\ell) = \binom{\ell}{2}$.
	
	Let $\lambda,\mu$ be two partitions and $\mu \subseteq \lambda$, which means the diagram of $\mu$ is contained in the diagram of $\lambda$, or equivalently, $\mu_i \leq \lambda_i$ for all $i$. When $\mu \subseteq \lambda$ the set-theoretic difference $\lambda -\mu$ is a \textit{skew diagram}. $\theta = \lambda -\mu$ is called a \textit{horizontal strip} if it has at most one box in each column, i.e., $\theta'_i \leq 1$ for all $i$. Note that $\lambda - \mu$ is a horizontal strip if and only if $\mu$ interlaces $\lambda$: $\lambda_1 \geq \mu_1 \geq \lambda_2 \geq \mu_2 \geq \cdots $. Dually, we have the notion of a vertical strip, which means $\lambda'-\mu'$ is a horizontal strip. If we have a horizontal strip with $n$ boxes then we write $\lambda-\mu \cong (n)$ and for vertical strips we write $\lambda-\mu \cong (1^n)$.
	
	\subsection{Symmetric functions and plethysm}\label{sec:symfun}
	
	The ring of \textit{symmetric functions} in variables $x_1,x_2,\ldots$ with coefficients in a field $\mathtt{k}$ is
	\begin{align*}
		\Lambda = \{f \in \mathtt{k}[[x_1,x_2,\ldots]]\,|\, f|_{x_i \leftrightarrow x_j} = f \, \hbox{ for all } i,j \,\hbox{ and } \mathrm{deg}(f)<\infty  \},
	\end{align*} where $f|_{x_i \leftrightarrow x_j}$ denotes the operation of interchanging $x_i$ and $x_j$ in $f$.
	Since we will be dealing with different sets of parameters $q,t,\ldots,u$, we let $\mathtt{k}$ be a field containing $\QQ(q,t,\ldots,u)$. The \textit{power sums} $p_k(X) = \sum_{i\geq 1}^{} x_i^k$ are algebraically independent over $\mathtt{k}$, and $\Lambda = \mathtt{k}[p_1(X),p_2(X),\ldots]$ is a polynomial ring in infinitely many commuting indeterminates. Besides the power sums we use the \textit{monomial symmetric functions} $m_\lambda$, obtained by symmetrizing $x^\lambda$; the \textit{elementary symmetric functions} $e_k = m_{(1^k)}$; and the \textit{complete homogeneous symmetric functions} $h_k = \sum_{\lambda \vdash k}^{} m_\lambda$. For $\lambda \in \Par$ we set $p_\lambda = \prod_i p_{\lambda_i}$, and similarly $e_\lambda, h_\lambda$; the sets $\{p_\lambda\}, \{m_\lambda\}, \{e_\lambda\}, \{h_\lambda\}$ are all bases of $\Lambda$.
	
	The ring $\Lambda$ is the free $\lambda$-ring on a single generator $X = p_1$ (\cite[Ch. I, Remark 2.15]{MacMainBook}, \cite[Section 2]{Mellit_Poincarepols}), the generator being interchangeable with the power sum $p_1$. In this paper a $\lambda$-ring may be thought of as a ring with an action of the space of symmetric functions $\Lambda$ on it, and we write the action in plethystic notation $f[A]$. For us the coefficients of $\Lambda$ will most often come from the ring (or its subrings) $\QQ(q,t,\ldots,u)$, and we set $p_n[f(q,t,\ldots,u)] = f(q^n,t^n,\ldots,u^n)$. The most important examples for us will be
	\begin{align*}
		p_n\bigg[X\dfrac{1-q}{1-t}\bigg] = \dfrac{1-q^n}{1-t^n} p_n \qquad\hbox{ and }\qquad p_n\bigg[\dfrac{1-q}{1-t}\bigg] = \dfrac{1-q^n}{1-t^n},
	\end{align*} and, for a symmetric function $f$, $$ f\bigg[\dfrac{1}{1-q}\bigg] = f(1,q,q^2,\ldots).$$
	
	The $\omega$ involution on $\Lambda$ is the ring homomorphism that interchanges $h_n$ and $e_n$ for every $n$. It can also be written as $$ \omega(f) = f[-uX]|_{u=-1} \qquad \hbox{ for } f \in \Lambda.$$

	\subsection{Cauchy identity}\label{sec:cauchy}
	
	Recall that the Hall scalar product on $\Lambda$ is defined by $$ \langle h_\lambda, m_\mu \rangle = \delta_{\lambda,\mu} \qquad \hbox{ for } \lambda,\mu \in \Par.$$
	
	Let $\Exp = \sum_{n \geq 0}^{} h_n \in \widehat{\Lambda}$, where $\widehat{\Lambda}$ is the completion of $\Lambda$. Suppose $(u_\lambda)_{\lambda \in \Par}$ and $(v_\lambda)_{\lambda \in \Par}$ are bases of $\Lambda$, dual with respect to the Hall scalar product. The Cauchy identity says that, in the completion of $\Lambda \otimes \Lambda$ (the free $\lambda$-ring on two generators $X,Y$),
	\begin{align}\label{eq:Cauchy}
		\Exp[XY] = \sum_{\lambda \in \Par}^{} u_\lambda[X] v_\lambda[Y].
	\end{align}
	
	If $(u_\lambda)_{\lambda \vdash n}$ and $(v_\lambda)_{\lambda \vdash n}$ are dual bases of homogeneous degree $n$ symmetric functions then 
	\begin{equation}\label{eq:hnCauchy}
		h_n[XY] = \sum_{\lambda \vdash n}^{} u_\lambda[X] v_\lambda[Y],
	\end{equation}
	and dualizing with the $\omega$-involution, we get
	\begin{equation}\label{eq:enCauchy}
		e_n[XY] = \sum_{\lambda \vdash n}^{} \omega(u_\lambda)[X] v_\lambda[Y].
	\end{equation}
	
	In \autoref{th:b_singlerow} we show that the Cauchy identity coupled with plethystic principal specialization gives the single row coefficients $b_{(n),\mu}(q,t)$ for $\mu \vdash n$.
	
	\subsection{Hall--Littlewood functions}\label{sec:HL}
	
	The $t$-analogue of the Hall scalar product is defined by $$\langle f,g \rangle_t = \bigg\langle f, g\bigg[\dfrac{X}{1-t}\bigg]\bigg\rangle \qquad \hbox{ for } f,g \in \Lambda.$$
	
	The following theorem serves as a definition of Hall--Littlewood functions.
	\begin{theorem}
		For $\lambda \in \Par$, there is a unique symmetric function $P_\lambda(t) \in \Lambda[t]$ determined by \begin{enumerate}
			\item $P_\lambda(t)  \in s_\lambda + \sum_{\mu< \lambda}^{} \ZZ[t]s_\mu,$
			\item $\langle P_\lambda(t), P_\mu(t) \rangle_t = 0$ if $\mu \neq \lambda$.
		\end{enumerate}
	\end{theorem}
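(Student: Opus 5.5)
The plan is the standard one from Macdonald's treatment \cite[Ch.~III]{MacMainBook}: prove uniqueness from triangularity plus orthogonality, and existence via an explicit construction whose orthogonality follows from a Cauchy-type kernel identity. Throughout, fix a degree $N$ and work in $\Lambda^N$, which has bases $(s_\lambda)_{\lambda\vdash N}$ and $(m_\lambda)_{\lambda\vdash N}$. These are unitriangular with respect to each other in dominance order, so condition (1) can equivalently be stated in the monomial basis.

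\emph{Uniqueness.} Suppose $(P_\lambda)$ and $(\tilde P_\lambda)$ both satisfy (1) and (2). We induct upward along dominance order. By (1), $D=P_\lambda-\tilde P_\lambda$ lies in the span of $s_\mu$ with $\mu<\lambda$. By the inductive hypothesis $P_\mu=\tilde P_\mu$ for $\mu<\lambda$, and by triangularity these span the same space. So $D$ lies in the span of the $P_\mu$ with $\mu<\lambda$, and by (2) it is orthogonal to each of them. Hence $\langle D,D\rangle_t=0$.

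To conclude $D=0$, we need nondegeneracy of $\langle\cdot,\cdot\rangle_t$ on this subspace. The Gram matrix of $\langle\cdot,\cdot\rangle_t$ on $s_\mu$ at $t=0$ is the identity. Hence its determinant is a nonzero element of $\QQ(t)$, and the form is nondegenerate on every span $\{s_\mu:\mu\le\lambda\}$ as well. Note that $D$ is orthogonal to all of $\mathrm{span}\{s_\mu:\mu<\lambda\}$, not merely to itself. Since the form restricted to that span is nondegenerate, $D=0$.

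\emph{Existence.} Gram--Schmidt along a linear extension of dominance produces elements satisfying (1) with $\QQ(t)$ coefficients, and orthogonal to earlier ones. However, it does not obviously give orthogonality for incomparable pairs, independence of the chosen linear extension, or coefficients in $\ZZ[t]$. So instead I would use the explicit symmetrization formula in $n\ge N$ variables:
\[
P_\lambda(x_1,\dots,x_n;t)=\frac{1}{v_\lambda(t)}\sum_{w\in S_n} w\Big(x^\lambda\prod_{i<j}\frac{x_i-tx_j}{x_i-x_j}\Big),
\qquad
v_\lambda(t)=\prod_{i\ge0}\frac{(t;t)_{m_i(\lambda)}}{(1-t)^{m_i(\lambda)}},
\]
where $m_0(\lambda)=n-\ell(\lambda)$. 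The plan has three steps.

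\textbf{Step 1 (integrality and triangularity).} Group the sum over cosets of the stabilizer $S_n^\lambda$. This cancels $v_\lambda(t)$ and leaves $\sum_{w\in S_n/S_n^\lambda} w\big(x^\lambda\prod_{\lambda_i>\lambda_j}\frac{x_i-tx_j}{x_i-x_j}\big)$. Expanding this shows the result lies in $m_\lambda+\sum_{\mu<\lambda}\ZZ[t]m_\mu$: each factor $\frac{x_i-tx_j}{x_i-x_j}$ expands so that the leading term is $x^\lambda$ and all other exponents are lower in dominance. Stability under $x_{n+1}=0$ then gives a well-defined symmetric function.

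\textbf{Step 2 (orthogonality).} Define $b_\lambda(t)=\prod_i (t;t)_{m_i(\lambda)}$ and $Q_\lambda=b_\lambda(t)P_\lambda$. Prove the kernel identity
\[
\sum_\lambda P_\lambda(x;t)Q_\lambda(y;t)=\prod_{i,j}\frac{1-tx_iy_j}{1-x_iy_j}=\Exp[(1-t)XY].
\]
Since $(1-t)$ times the $t$-form's kernel is the ordinary Cauchy kernel, this says precisely that $(P_\lambda)$ and $(Q_\lambda)$ are dual bases for $\langle\cdot,\cdot\rangle_t$. That in turn gives (2).

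\textbf{Step 3.} Uniqueness then also shows the result is independent of all choices.

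The main obstacle is the kernel identity in Step 2. I would first try induction on the number of $y$-variables: expand $\prod_i\frac{1-tx_iy}{1-x_iy}=\sum_r q_r(x;t)y^r$, and prove the Pieri-type rule for $P_\mu q_r$ directly from the symmetrization formula. Alternatively, one could realize $P_\lambda$ as eigenfunctions of the $q\to0$ limit of Macdonald's operator, which is self-adjoint for $\langle\cdot,\cdot\rangle_t$, triangular on $m_\lambda$, and has distinct eigenvalues; that would yield orthogonality immediately.
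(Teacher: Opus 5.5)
The paper does not prove this statement. It quotes it from Macdonald (Ch.~III) as the definition of $P_\lambda(t)$, so your proposal has to stand on its own.

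\textbf{What is fine.}
\begin{itemize}
\item Your uniqueness argument is correct. The Gram matrix of $\langle\cdot,\cdot\rangle_t$ on any set of Schur functions has entries in $\QQ(t)$ that are regular at $t=0$, and it reduces to the identity there. So the form is nondegenerate on $\mathrm{span}\{s_\mu:\mu<\lambda\}$.
\item Step~1 is the standard symmetrization argument. To make ``expanding'' rigorous, first show the sum is a polynomial (antisymmetrize and divide by the Vandermonde). Then expand every coset term in the same region $|x_1|\gg\cdots\gg|x_n|$; all exponents there have the form $w\lambda-\alpha$ with $\alpha$ in the positive root cone.
\item Your remark that uniqueness makes the construction independent of $n$ is also fine.
\end{itemize}

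\textbf{The gap: Step~2.} This step carries the entire content of condition (2), and neither of your routes closes it.

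\emph{The induction on $y$-variables is incomplete.} After applying a Pieri rule $P_\mu q_r=\sum_\lambda\varphi_{\lambda/\mu}(t)P_\lambda$, the coefficient of $P_\lambda(x)$ is $\sum_\mu\varphi_{\lambda/\mu}(t)\,Q_\mu(y_1,\dots,y_m)\,y_{m+1}^{|\lambda-\mu|}$. To identify this with $Q_\lambda(y_1,\dots,y_{m+1})$ you also need a branching rule for $Q_\lambda$ with exactly the same coefficients, which you do not mention.

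More seriously, the $q_r$-Pieri rule is not a cheap consequence of the symmetrization formula. Multiplying by $q_r=\sum_{\nu\vdash r}(1-t)^{\ell(\nu)}m_\nu$ produces symmetrizations of $x^{\mu+\alpha}$ with non-partition exponents, and these must be straightened. In the standard treatment this rule is deduced from the branching rule by duality, that is, from the very kernel identity you are trying to prove.

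\emph{The eigenoperator alternative fails outright.} At $q=0$ the eigenvalue $\sum_i q^{\lambda_i}t^{n-i}$ of $D_n^1$ becomes $1+t+\cdots+t^{n-\ell(\lambda)-1}$. Likewise every $D_n^r$ has eigenvalues depending only on $\ell(\lambda)$. So, for instance, $P_{(3,1)}$ and $P_{(2,2)}$ share an eigenspace, and self-adjointness gives no orthogonality between them.

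\textbf{A route that works and needs no Pieri rule.}
\begin{itemize}
\item Put $q_\nu=\prod_i q_{\nu_i}$. The expansion $\prod_{i,j}\frac{1-tx_iy_j}{1-x_iy_j}=\sum_\nu q_\nu(x)m_\nu(y)$ gives $\langle m_\mu,q_\nu\rangle_t=\delta_{\mu\nu}$ (equivalently, $q_\nu[X/(1-t)]=h_\nu$).
\item Suppose you also have $Q_\lambda\in q_\lambda+\sum_{\nu>\lambda}\ZZ[t]q_\nu$. This is what Macdonald's raising operator formula, Ch.~III (2.15), says; it is the untransformed form of \eqref{eq:Qp=hraisingop}.
\item Then Step~1 gives $\langle P_\lambda,Q_\mu\rangle_t=0$ unless $\mu\le\lambda$.
\item Symmetry of $\langle\cdot,\cdot\rangle_t$ and $Q_\mu=b_\mu(t)P_\mu$ give $\langle P_\lambda,Q_\mu\rangle_t=\frac{b_\mu(t)}{b_\lambda(t)}\langle P_\mu,Q_\lambda\rangle_t$, which vanishes unless $\lambda\le\mu$. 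Together these yield (2).
\end{itemize}
The substantive step you would still have to supply is deriving this $q$-triangularity of $Q_\lambda$ from the symmetrization formula.
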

	The functions $P_\lambda(t)$ are called \textit{Hall--Littlewood $P$-functions}, or just \textit{Hall--Littlewood functions}.
	
	The \textit{Hall--Littlewood $Q$-functions} are the basis dual to the $P$-functions with respect to $\langle-,-\rangle_t$, and the \textit{Hall--Littlewood $Q'$-functions} (also called \textit{transformed Hall--Littlewood functions}) are the basis dual to the $P$-functions with respect to the usual $\langle-,-\rangle$. With these definitions,
	\begin{align}
		\langle P_\lambda(t),Q_\mu(t) \rangle_t = \langle P_\lambda(t), Q'_\mu(t) \rangle = \delta_{\lambda,\mu} \qquad \hbox{ for } \lambda,\mu \in \Par.
	\end{align}
	
	\noindent By \cite[Ch. III, (2.11),(2.12)]{MacMainBook}, \begin{equation}\label{eq:PtoQ}
		P_\lambda(t) = \dfrac{1}{\prod_{i\geq 1}(t;t)_{m_i(\lambda)}} Q_\lambda(t) \qquad \hbox{ for } \lambda \in \Par.
	\end{equation}
	
	\noindent Using the plethystic calculus,
	\begin{align}\label{eq:Q'=Qpleth}
		Q'_\lambda(t) = Q_\lambda(t)\bigg[\dfrac{X}{1-t}\bigg] \qquad \hbox{ for } \lambda \in \Par.
	\end{align}
	Note that
	\begin{align}
		Q'_\lambda(0) = s_\lambda \qquad \hbox{ and } \qquad Q'_\lambda(1) = h_\lambda \qquad \hbox{ for } \lambda \in \Par.
	\end{align}
	\noindent
	It is not hard to deduce from the triangularity and orthogonality that
	\cite[Ch. III, (2.13)]{MacMainBook}: \begin{equation}\label{eq:Qn=hnpleth}
		Q_{(n)}(t)[X] = h_n[(1-t)X] \qquad \hbox{ and } \qquad Q'_{(n)}(t) = h_n \qquad \hbox{ for } n \in \ZZ_{\geq 0}.
	\end{equation}
	By \eqref{eq:PtoQ} and \eqref{eq:Qn=hnpleth} we get
	\begin{equation}\label{eq:Pn=hpleth}
		P_{(n)}(t)[X] = \dfrac{1}{(1-t)}h_n[(1-t)X] \qquad \hbox{ for } n \in \ZZ_{\geq 1}.
	\end{equation}
	
	\subsection{The principal specialization formula for Hall--Littlewood polynomials}\label{sec:princspec}
	
	For lack of a better reference, we deduce the principal specialization formula for Hall--Littlewood functions from that of the Macdonald functions. Macdonald introduced \cite[Ch. VI]{MacMainBook} a family of symmetric functions $P_\lambda(q,t)$ depending on two parameters $q$ and $t$, which specialize to the Hall--Littlewood $P$ functions at $q=0$. By \cite[Ch. VI, (6.17)]{MacMainBook},
	\begin{align*}
		P_\lambda(q,t)\bigg[\dfrac{1-u}{1-t}\bigg] = \prod_{x \in \lambda}^{} \dfrac{t^{\coleg(x)}-q^{\coarm(x)}u}{1-q^{\arm(x)}t^{\leg(x)+1}},
	\end{align*}
	which at $q=0$, using that the boxes with $\coarm = 0$ are the ones in the first column while the boxes with $\arm = 0$ are the boxes in the right boundary, gives
	\begin{align*}
		P_\lambda(t)\bigg[\dfrac{1-u}{1-t}\bigg] = t^{n(\lambda)} \prod_{i \geq 1}^{}\dfrac{1}{(t;t)_{m_i(\lambda)}} (u;t^{-1})_{\ell(\lambda)},
	\end{align*}
	and hence
	\begin{align}\label{eq:Qlamprinc}
		Q_\lambda(t)\bigg[\dfrac{1-u}{1-t}\bigg] = t^{n(\lambda)}  (u;t^{-1})_{\ell(\lambda)}.
	\end{align}
	
	\subsection{Raising operators}\label{sec:raising}
	
	We recall the construction of $Q'_\mu(t)$ in terms of raising operators, this will be used in the proof of positivity of $b_{\lambda,\mu}(t+n,n)$ for $n \in \ZZ_{\geq 0}$ in \autoref{thm:tplusn}.
	
	Let $\widehat{\Lambda}$ denote the completion of the ring of symmetric functions, i.e, infinite linear combinations of $(h_\lambda)_{\lambda \in \Par}$. Let $n \in \ZZ_{\geq 1}$ and define $\mathtt{k}$-linear maps $$\mathbf{h}^n: \mathtt{k}[[x_1^{\pm 1},\ldots, x_n^{\pm 1}]] \to \widehat{\Lambda}$$ by $$\mathbf{h}^n\bigg(\sum_{\alpha \in \ZZ^n}^{}c_\alpha x^\alpha\bigg) = \sum_{\alpha \in \ZZ_{\geq 0}^n }^{} c_\alpha h_\alpha,$$ for coefficients $c_\alpha \in \mathtt{k}.$ 
    When $n$ is clear from the context we remove the $n$ from the notation.
	
	For a partition $\mu$ with $\ell(\mu) \leq n$, set $R^n_+ = \{\varepsilon_i - \varepsilon_j \,|\, 1 \leq i<j \leq n\} \subset \ZZ^n$, which is the set of positive roots for the root system of type $A_{n-1}$. Then \cite[(2)]{Lascoux_addingpm1},\cite[Ch. III, ($2.15^\prime$)]{MacMainBook} says
	\begin{align}\label{eq:Qp=hraisingop}
		Q'_\mu(t) = \mathbf{h}\bigg( \prod_{\alpha \in R^n_+}^{} \dfrac{1-x^\alpha}{1-tx^\alpha} x^\mu \bigg)
	\end{align} where $\dfrac{1}{1-tx^\alpha} = \sum_{n \in \ZZ_{\geq 0}}^{} t^n x^{n \alpha}$ as an element of the formal power series ring. Since the elements of $R^n_+$ are of degree $0$, the right hand side of \eqref{eq:Qp=hraisingop} lands in homogeneous symmetric functions of degree $|\mu|$, in particular, it is an element of $\Lambda.$
	
	Note that the above can be used to define $Q'_\mu(t)$ for any $\mu \in \ZZ^n$. By \cite[(3)]{Lascoux_addingpm1}, \cite[Ch. III.2, Ex. 2]{MacMainBook} they satisfy the straightening rules
	\begin{align*}
		Q'_{(\ldots,s,r,\ldots)}(t) = tQ'_{(\ldots,r,s,\ldots)}(t) - Q'_{(\ldots,r-1,s+1,\ldots)}(t) + t Q'_{(\ldots,s+1,r-1,\ldots)}(t).
	\end{align*}
	These can be iterated to give
	\begin{proposition}{\textup{\cite[Ch. III.2, Ex. 2]{MacMainBook}}}\label{prop:Qpstraightening}
		Let $s<r$. 
		If $r-s = 2m+1$, then
		\begin{align}
			Q'_{(s,r)}(t) &= t Q'_{(r,s)}(t) + \sum_{i=1}^{m}(t^{i+1}-t^{i-1}) Q'_{(r-i,s+i)}(t);
			\intertext{if $r-s = 2m$, then}
			Q'_{(s,r)}(t) &= t Q'_{(r,s)}(t) + \sum_{i=1}^{m-1}(t^{i+1}-t^{i-1}) Q'_{(r-i,s+i)}(t)  + (t^m-t^{m-1})Q'_{(r-m,s+m)}(t);
		\end{align}
		and the same holds for any two consecutive indices of $Q'$.
	\end{proposition}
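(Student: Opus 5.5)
The plan is to prove both formulas together by induction on $d = r-s \geq 1$, using only the three-term straightening rule
\begin{align*}
Q'_{(\ldots,s,r,\ldots)}(t) = tQ'_{(\ldots,r,s,\ldots)}(t) - Q'_{(\ldots,r-1,s+1,\ldots)}(t) + t Q'_{(\ldots,s+1,r-1,\ldots)}(t)
\end{align*}
recalled just above. This rule acts only on the two chosen consecutive entries and leaves the other entries of the index untouched. So every manipulation below works verbatim inside a longer index, which gives the final clause of the statement. I will therefore write only the two relevant entries. The step $d \mapsto d-2$ preserves parity, so there are two base cases, $d=1$ and $d=2$.

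For $d=1$ (so $r=s+1$, $m=0$), the rule gives $Q'_{(r-1,s+1)} = Q'_{(s,r)}$ and $Q'_{(s+1,r-1)} = Q'_{(r,s)}$. Hence $Q'_{(s,r)} = 2tQ'_{(r,s)} - Q'_{(s,r)}$, i.e. $Q'_{(s,r)} = tQ'_{(r,s)}$, which is the odd formula with an empty sum. For $d=2$ ($m=1$), the two last terms of the rule both equal $Q'_{(s+1,s+1)}$. This gives $Q'_{(s,r)} = tQ'_{(r,s)} + (t-1)Q'_{(r-1,s+1)}$, which matches the even formula: its middle sum is empty and its last term is $(t^m-t^{m-1})Q'_{(r-m,s+m)}$ with $m=1$.

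For the induction step with $d\geq 3$, apply the rule once. The last term $tQ'_{(s+1,r-1)}$ has difference $d-2$, so the induction hypothesis applies to it with $m$ replaced by $m-1$, after substituting $s \mapsto s+1$ and $r \mapsto r-1$. The leading term of the hypothesis, $t\cdot tQ'_{(r-1,s+1)}$, combines with $-Q'_{(r-1,s+1)}$ to give $(t^2-1)Q'_{(r-1,s+1)}$, which is exactly the $i=1$ summand. The hypothesis's summand indexed by $j$ involves $Q'_{((r-1)-j,(s+1)+j)} = Q'_{(r-(j+1),s+(j+1))}$. After multiplying by $t$, its coefficient becomes $t^{j+2}-t^{j}$, which is the summand for $i=j+1$. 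In the even case, the boundary term $t(t^{m-1}-t^{m-2})$ likewise becomes $t^m-t^{m-1}$ at index $m$.

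I expect the main obstacle to be purely bookkeeping: tracking that the index ranges $1\le i\le m$ (odd case) and $1\le i\le m-1$ plus the separate boundary term (even case) shift correctly under $j\mapsto j+1$. There is no conceptual difficulty.
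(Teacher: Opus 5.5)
Your argument is correct and is essentially the paper's own: the paper only says the three-term straightening rule ``can be iterated,'' and your induction on $r-s$ in steps of two is exactly that iteration written out. Your base cases are $r-s=1$, where dividing $2Q'_{(s,r)}=2tQ'_{(r,s)}$ by $2$ is harmless over a field containing $\QQ$, and $r-s=2$; the reindexing $j\mapsto j+1$ and the even-case boundary term $t(t^{m-1}-t^{m-2})=t^m-t^{m-1}$ both check out.
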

	
	\subsection{Monomial expansion}\label{sec:monexpform}
	
	Recall that a \textit{semistandard Young tableau} $T$ of shape $\lambda \in \Par$ is a filling of the Young diagram of $\lambda$ with numbers in $\ZZ_{>0}$ such that the entries in each row weakly increase from left to right and the entries in each column strictly increase from top to bottom. The content of such a tableau is the sequence whose $i$th term is the number of $i$'s in the tableau. The set of semistandard Young tableaux of shape $\lambda$ and content $\mu$ is denoted by $\SSYT(\lambda,\mu)$.
	
	Next, we recall Macdonald's monomial expansion formula for $P_\lambda(t)$. We remark that how we write the formula is the box-by-box version of the one found in \cite[Ch. III, (5.9$^\prime$)]{MacMainBook}, and the connection may be found in \cite[\S 4]{Bhattacharya2024monomialexpansionformulahalllittlewood}.
	
	For a box $x \in \lambda$, let $\Leg(x,\lambda)$ denote the set of all boxes below $x$ in the same column. Let $T$ be a semistandard Young tableau of shape $\lambda$. Let $\leg_T(x,<i) = \# \{ x' \in \Leg(x,\lambda)\,|\, T(x')<i \}$.
	
	For example, in \autoref{fig:egpsiTx<}, the gray boxes denote the set $\Leg(x,\lambda)$ for the bulleted box $x=(2,2)$ and $\lambda=(4,3,3,2,1)$. The diagram on the right shows a semistandard Young tableau $T \in \SSYT(\lambda,\mu)$ with $\mu = (3,3,2,2,2,1)$. There
	$\leg_T(x,<i)=0$ for $i\in\{1,2,3,4\}$ and
	$\leg_T(x,<5)=1,\ \leg_T(x,<6)=2$.
	\begin{figure}[htbp]
		\centering
		\begin{minipage}{0.48\textwidth}
			\centering
			\ytableausetup{nosmalltableaux,centertableaux}
			\[
			\ydiagram{4,3,3,2,1}*[*(gray)]{0,0,1+1,1+1}*[*(white) \bullet ]{0,1+1}
			\]
		\end{minipage}
		\hfill
		\begin{minipage}{0.48\textwidth}
			\centering
			\[
			T = \ytableaushort{1112,2{*(gray)2}3,345,45,6}\,,
			\]
		\end{minipage}
		\caption{Example of $\leg_T(x,<i)$}
		\label{fig:egpsiTx<}
	\end{figure}
	
	For a box $x = (r,c) \in \lambda$ define \begin{equation}\label{eq:psiTx}
		\psi_T(x;t) = \begin{cases}
			1- t^{\leg_T(x,<i)+1}\,, & \hbox{ if } T(r,c+1) = i \hbox{ and } i \notin T(\Leg(x,\lambda))\cup \{T(x)\},
			\\
			\qquad 1\,, & \hbox{ otherwise}.
		\end{cases}
	\end{equation} When $c=\lambda_r$, $\psi_T(x;t) = 1$.
	Define
	\begin{equation}
		\psi_T(t) = \prod_{x \in \lambda} \psi_T(x;t).
	\end{equation}
	Then
	\begin{theorem}{\textup{\cite[Ch. III, (5.11$^\prime$)]{MacMainBook}}}
		For $\lambda \in \Par$,
		\begin{equation}
			P_\lambda(t) = \sum_{\mu \leq \lambda}^{} \bigg( \sum_{T \in \SSYT(\lambda,\mu)}^{} \psi_T(t) \bigg) m_\mu.
		\end{equation}
	\end{theorem}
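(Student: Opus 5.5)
This statement is Macdonald's monomial expansion formula, quoted from \cite[Ch. III, (5.11$^\prime$)]{MacMainBook}. The plan is to derive it from the branching (Pieri-type) rule for Hall--Littlewood polynomials in finitely many variables, and then to check that the box-by-box weights $\psi_T(x;t)$ reproduce Macdonald's strip weights.

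The first step is to work in $n$ variables, where $\Lambda$ is recovered by stability. We use the branching rule
\begin{align*}
P_\lambda(x_1,\ldots,x_n;t)=\sum_{\mu} \psi_{\lambda/\mu}(t)\, x_n^{|\lambda|-|\mu|}\, P_\mu(x_1,\ldots,x_{n-1};t),
\end{align*}
where $\mu$ runs over partitions with $\lambda-\mu$ a horizontal strip. The strip weight is
\begin{align*}
\psi_{\lambda/\mu}(t)=\prod_{i\in I}\bigl(1-t^{m_i(\mu)}\bigr),
\end{align*}
with $I$ the set of $i\geq 1$ such that $\lambda-\mu$ has a box in column $i+1$ but none in column $i$. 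I would prove this rule in one of two ways.
\begin{enumerate}
\item Via the symmetrization definition $P_\lambda=\frac{1}{v_\lambda(t)}\sum_{w\in S_n} w\bigl(x^\lambda\prod_{i<j}\frac{x_i-tx_j}{x_i-x_j}\bigr)$: specialise the last variable and collect terms.
\item Via duality: $\psi_{\lambda/\mu}$ is the coefficient of $P_\lambda$ in $P_\mu\,q_r$ (up to normalisation), and the Pieri coefficient $\varphi_{\lambda/\mu}$ for $Q_\mu q_r$ is computed from the Hall algebra count of subgroups of a finite abelian $p$-group of type $\lambda$ with cotype a cyclic group.
\end{enumerate}

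The second step is to iterate the branching rule. A chain $\emptyset=\lambda^{(0)}\subset\cdots\subset\lambda^{(n)}=\lambda$ of horizontal strips is the same thing as an SSYT $T$ of shape $\lambda$: the entries equal to $i$ fill $\lambda^{(i)}-\lambda^{(i-1)}$. This gives
\begin{align*}
P_\lambda=\sum_T x^T\prod_i \psi_{\lambda^{(i)}/\lambda^{(i-1)}}(t).
\end{align*}
Extracting the coefficient of $x^\mu$ for $\mu$ a partition yields the $m_\mu$ coefficient. The restriction $\mu\leq\lambda$ is automatic, since $\SSYT(\lambda,\mu)$ is empty otherwise.

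The third step, and the main obstacle, is the combinatorial translation showing
\begin{align*}
\prod_i\psi_{\lambda^{(i)}/\lambda^{(i-1)}}(t)=\prod_{x\in\lambda}\psi_T(x;t).
\end{align*}
Fix $i$ and a column index $c\in I$ for the strip of entries $i$, writing $c$ for the index called $i$ in the definition of $I$. So the strip has a box $(r,c+1)$ with $T(r,c+1)=i$ but no box containing $i$ in column $c$. The factor is $1-t^{m_c(\lambda^{(i-1)})}$. The rows of $\lambda^{(i-1)}$ of length exactly $c$ are the row $r$ itself, together with the rows below $r$ whose column-$c$ entry is $<i$ and which have length $c$. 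I would match this factor with the box $x=(r,c)$. The condition that $i$ does not occur in $\Leg(x)\cup\{T(x)\}$ says precisely that column $c$ receives no $i$. Then I must argue that the rows below $r$ counted by $m_c$ are exactly the boxes of $\Leg(x,\lambda)$ with entry $<i$. Any such row $r'>r$ has $(r',c)$ filled with a value $<i$. Conversely, if $(r',c)$ has entry $<i$, then $(r',c+1)$, if present, has entry $\geq$ the value above it in column $c+1$, which is $i$, and in fact $>i$ by column strictness. Hence that row has length $c$ in $\lambda^{(i-1)}$. This gives exponent $\leg_T(x,<i)+1$, with the $+1$ accounting for row $r$ itself.

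Uniqueness of $r$ must also be checked: the box with entry $i$ in column $c+1$ is unique because the strip is horizontal. Every box $x$ with a non-trivial factor $\psi_T(x;t)$ arises this way, which gives the bijection between the two sets of factors. The bookkeeping in this translation — strictness along columns and the edge cases at the row ends, where $\psi_T(x;t)=1$ — is where I expect the care to be needed. The rest follows from the cited branching rule.
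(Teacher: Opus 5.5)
Your proposal is correct and follows essentially the route the paper relies on. The paper gives no argument of its own: it cites Macdonald's (5.11$'$), which is the horizontal-strip branching rule iterated along the chain $\lambda^{(0)}\subset\cdots\subset\lambda^{(n)}$ of $T$, and defers the conversion of strip weights into the box weights $\psi_T(x;t)$ to \cite[\S 4]{Bhattacharya2024monomialexpansionformulahalllittlewood}. You carry out that conversion correctly: each column $c$ in the strip set matches the box $x=(r,c)$ to the left of the unique $i$ in column $c+1$, and $m_c(\lambda^{(i-1)})=\leg_T(x,<i)+1$, with rows above $r$ excluded by column strictness in column $c+1$.

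One labelling slip in your sketch of route (2): in Macdonald's notation the coefficient of $P_\lambda$ in $P_\mu q_r$ is $\varphi_{\lambda/\mu}$, whereas your $\psi_{\lambda/\mu}$ is exactly the coefficient of $Q_\lambda$ in $Q_\mu q_r$. For example, $P_{(1)}q_1=(1-t)P_{(2)}+(1-t^2)P_{(1,1)}$ but $Q_{(1)}q_1=(1-t)Q_{(2)}+Q_{(1,1)}$. This does not affect the branching rule you actually use.
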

	
	In particular, if $\lambda$ is of hook shape, i.e., $\lambda = (n,1^k)$ for some $n \in \ZZ_{> 1}$ and $k \in \ZZ_{\geq 0}$, then for any $T$ of shape $\lambda$ whose content is a partition, $$\psi_T(t) = [m]_t(1-t)^{r-1}$$ where $r$ is the number of distinct entries in the first row of $T$, and $$m = \begin{cases}
		T(1,2)-1, &\hbox{ if } T(1,2) \neq 1,
		\\
		\qquad 1, &\hbox{ otherwise.}
	\end{cases}$$
	
	For example, \begin{align*}
		\psi_{\ytableaushort{12,3}}(t) =  (1-t), \qquad \hbox{ and } \qquad \psi_{\ytableaushort{13,2}}(t) = (1-t^2),
	\end{align*}
	giving
	\begin{align*}
		P_{(2,1)}(t) = m_{(2,1)} + \bigg((1-t)+(1-t^2)\bigg) m_{(1^3)}.
	\end{align*}
	
	We generalize this formula for $b_{\lambda,\mu}(q,t)$ when $\lambda$ is a hook shape in \S\ref{sec:hook}.

%\subsection{Pieri rules}
%Closely related to the monomial expansion formula is the vertical strip Pieri rule for Hall-Littlewood polynomials, which we now discuss. This will be crucial for the proof of our theorems.
%
%From the Pieri rule for Macdonald polynomials \cite[Ch. VI, (6.24)]{MacMainBook} for the product $P_\nu(q,t) e_r$, specialized to $q=0$, we have for $\nu \in \Par$ and $m \in \ZZ_{\geq 0}$,
%\begin{equation}\label{eq:Pieri}
%	P_\nu(t)e_m =   \sum_{\lambda - \nu \cong (1^m)}^{} \psi'_{\lambda/\nu}(0,t) P_\lambda(t),
%\end{equation}
%where, $\lambda - \nu$ is a vertical strip of size $m$ and
%\begin{equation}\label{eq:psidef}
%	\psi'_{\lambda/\nu}(0,t) = \prod_{i}^{} {\qbinom{\lambda'_i - \lambda'_{i+1}}{\lambda'_i - \nu'_i}}_t.
%\end{equation}
%In particular, for $n-k \geq 1$ and $k \geq 1$,
%\begin{align}\label{eq:PnemPieri}
%	P_{(n-k)}(t)e_k = P_{(n-k,1^k)}(t) + P_{(n-k+1,1^{k-1})}(t).
%\end{align}
	
	\subsection{Kostka--Foulkes polynomials}\label{sec:kostkafoulkes}
	
	Write $$ s_\lambda = \sum_{\mu \in \Par}^{} K_{\lambda,\mu}(t) P_\mu(t).$$
	Note that $K_{\lambda,\lambda}(t) = 1$ and $K_{\lambda,\mu}(t) = 0$ unless $\mu \leq \lambda$.
	
	\begin{theorem}{\textup{\cite{LS_suruneFoulkes}}}
		For $\lambda,\mu \in \Par$,	$K_{\lambda,\mu}(t) \in \ZZ_{\geq 0}[t]$.
	\end{theorem}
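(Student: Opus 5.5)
We will not reprove this from scratch in the combinatorial style of \cite{LS_suruneFoulkes}. Instead we plan to realize $K_{\lambda,\mu}(t)$ as a graded multiplicity in a graded $S_n$-module, which makes nonnegativity of the coefficients automatic.

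\emph{Step 1 (duality).} Since the Schur functions are self-dual for $\langle-,-\rangle$ and $Q'_\mu(t)$ is dual to $P_\mu(t)$, the expansion $s_\lambda=\sum_\mu K_{\lambda,\mu}(t)P_\mu(t)$ dualizes to
\begin{align*}
Q'_\mu(t)=\sum_{\lambda} K_{\lambda,\mu}(t)\, s_\lambda .
\end{align*}
So it suffices to show that $Q'_\mu(t)$ is Schur positive. Equivalently, we may show this for $\widetilde H_\mu(t):=t^{n(\mu)}Q'_\mu(t^{-1})$, provided the degree bound $\deg K_{\lambda,\mu}\le n(\mu)$ holds, so that the reversal stays polynomial. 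That bound follows from triangularity together with the principal specialization \eqref{eq:Qlamprinc}.

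\emph{Step 2 (a graded module).} For $\mu\vdash n$, take the Garsia--Procesi ring $R_\mu=\CC[x_1,\dots,x_n]/I_\mu$. Here $I_\mu$ is generated by the elementary symmetric functions $e_r(S)$ in subsets $S$ of the variables with $|S|=k$ and $r>k-(\mu'_n+\cdots+\mu'_{n-k+1})$, where $\mu'_j=0$ for $j>\mu_1$. This ring is isomorphic to the cohomology of the Springer fibre $\mathcal B_\mu$, and it carries an $S_n$-action preserving the grading. Its graded Frobenius characteristic $\mathcal F_{R_\mu}(t)=\sum_d t^d\,\mathrm{Frob}(R_\mu)_d$ is Schur positive by construction.

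\emph{Step 3 (identification).} We then prove $\mathcal F_{R_\mu}(t)=\widetilde H_\mu(t)$ by checking that both sides satisfy the same recursion in $\mu$. On the symmetric-function side, the raising-operator formula \eqref{eq:Qp=hraisingop} and the straightening rules of \autoref{prop:Qpstraightening} yield Morris' recursion: removing the first part of $\mu$ expresses $Q'_\mu$ via operators applied to $Q'$ of the smaller partitions $\mu$ with one box removed from a corner. On the module side, Garsia--Procesi construct a filtration of $R_\mu$, restricted to $S_{n-1}$, by the ideals $(x_n^k)$. Its graded pieces are $t^{k}R_{\mu^{(k)}}$, where $\mu^{(k)}$ runs over the partitions obtained by removing a box from the corner in the appropriate column. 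This gives the branching $\partial_{p_1}\mathcal F_{R_\mu}=\sum_k t^{k}\mathcal F_{R_{\mu^{(k)}}}$. The analogous identity for $\widetilde H_\mu$ follows from the monomial expansion formula in \S\ref{sec:monexpform}. Since a symmetric function of degree $n$ is determined by its $\partial_{p_1}$ together with its value at a single point (e.g.\ its coefficient of $s_{(n)}$, which is $1$ on both sides), induction on $n$ gives equality.

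The main obstacle is Step 3 on the module side. One must show that the ideal $I_\mu$ has the right size, i.e.\ $\dim R_\mu=\binom{n}{\mu}$, and that the filtration quotients are exactly the smaller $R_{\mu^{(k)}}$. I would first attempt this by exhibiting a spanning set of monomials indexed by the appropriate "$\mu$-subsets" to get the upper bound, and get the lower bound from the orbit-harmonics description of $R_\mu$ as the graded ring of functions on an $S_n$-orbit of size $\binom{n}{\mu}$. If this becomes too heavy, the fallback is the Lascoux--Sch\"utzenberger route: define charge and show that $\sum_T t^{\ch(T)}$ satisfies Morris' recursion, using cyclage on words.
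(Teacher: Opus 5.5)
The paper does not prove this statement at all. It quotes Lascoux--Sch\"utzenberger, for whom nonnegativity is immediate from the charge formula $K_{\lambda,\mu}(t)=\sum_{T\in\SSYT(\lambda,\mu)}t^{\ch(T)}$. Your Garsia--Procesi route is genuinely different, but Step 3 has a real gap: the uniqueness principle you use to close the induction is false. On $\Lambda^n$ the kernel of $\partial_{p_1}$ is $\QQ[p_2,p_3,\ldots]_n$. Its dimension is the number of partitions of $n$ with no part equal to $1$, which is at least $2$ once $n\ge 4$. For example,
\begin{align*}
\partial_{p_1}(s_{211}) = s_{21}+s_{111} = \partial_{p_1}(s_{22}+s_{1111}),
\end{align*}
so $s_{4}+s_{211}$ and $s_{4}+s_{22}+s_{1111}$ have the same $\partial_{p_1}$ and the same $s_{(4)}$-coefficient; their difference is $\tfrac12(p_4-p_2^2)$. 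Matching branching rules therefore only show that $\mathcal F_{R_\mu}-\widetilde H_\mu\in\ZZ[t][p_2,p_3,\ldots]$ with vanishing $s_{(n)}$-coefficient. Restriction to $S_{n-1}$ sees the graded character only on permutations with a fixed point, and nothing in your plan controls it on fixed-point-free permutations. So even granting the dimension count $\dim R_\mu=\binom{n}{\mu}$ and the filtration (what you call the main obstacle), $\mathcal F_{R_\mu}=\widetilde H_\mu$ does not follow.

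To close the gap you need an additional characterization. The standard one says $\widetilde H_\mu$ is the unique symmetric function with
\begin{itemize}
\item $\widetilde H_\mu\in\mathrm{span}\{s_\lambda:\lambda\ge\mu\}$,
\item $\widetilde H_\mu[X(1-t)]\in\mathrm{span}\{s_\lambda:\lambda\ge\mu'\}$,
\item $\langle\widetilde H_\mu,s_{(n)}\rangle=1$.
\end{itemize}
For $R_\mu$ the first condition does follow from orbit harmonics ($\mathcal F_{R_\mu}(1)=h_\mu$) together with Schur positivity of each graded piece. The second is a substantive statement about $R_\mu$ that your plan does not address. Alternatively you can import the Hotta--Springer/Lusztig identification of the graded character of $H^*(\mathcal B_\mu)$, but then positivity is geometric and the recursion is unnecessary.

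Two smaller points. First, the branching identity for $\widetilde H_\mu$ does not come from Morris' recursion or from the monomial expansion formula. It comes from the Pieri rule \eqref{eq:Pieri} with $m=1$, using that $\partial_{p_1}$ is adjoint to multiplication by $e_1$. This gives $\partial_{p_1}Q'_\mu(t)=\sum_c[m_c(\mu)]_t\,Q'_{\mu^{-}_{c}}(t)$, where $\mu^{-}_{c}$ is $\mu$ with the corner box in column $c$ removed. Second, principal specialization cannot give $\deg K_{\lambda,\mu}\le n(\mu)$, since $s_\lambda[1-u]=0$ unless $\lambda$ is a hook. You do not need that bound, though, because $K_{\lambda,\mu}(t)\in\ZZ[t]$ is already known.
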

	
	Lascoux and Sch\"utzenberger found a statistic \textit{charge} $\ch: \SSYT \to \ZZ_{\geq 0}$ and proved that $$K_{\lambda,\mu}(t) = \sum_{T \in \SSYT(\lambda,\mu)}^{} t^{\ch(T)}.$$
	
	\subsection{Charge}\label{sec:charge}
	
	We give a brief description of charge; some better references are \cite[Ch. III.6]{MacMainBook}, \cite[Ch. 5]{Lothaire_AConwords}.
	
	For $\lambda,\mu \in \Par$ and $T \in \SSYT(\lambda,\mu)$, the charge $\ch(T) \in \ZZ_{\geq 0}$ is defined as follows.
	
	First, form the \textit{reverse reading word} $w$ by reading the rows of $T$ from right to left, starting with the top row and proceeding to the bottom.
	
	Next, from this word, we extract a sequence of standard subwords $w^1, w^2, \ldots, w^{\mu_1}$. To construct $w^1$: begin by selecting the first occurrence of $1$ in the word (from the left). After selecting $i$, choose the first occurrence of $i+1$ that appears to its right; if none exists, then circle back and start from the beginning of the word. Continue this process until the maximum letter is reached. Once $w^1$ is formed, remove its letters from the word and repeat on the remaining sequence to obtain $w^2, w^3, \ldots$ in turn.
	
	The charge of $w$ is then the sum of the charges of its standard subwords. For a standard subword, assign to each letter $i$ an index: set $I(1)= 0$, and for $i \geq 1$, \begin{align*}
		I(i+1) = \begin{cases}
			I(i), & \text{if } i+1 \text{ appears to the right of } i,\\
			I(i) + 1, & \text{if } i+1 \text{ appears to the left of } i.
		\end{cases}
	\end{align*}
	The charge of the subword is $\sum_{i}^{} I(i)$, the sum of the indices of all of its letters.
	
	For example, for the tableau $T$ from \autoref{fig:egpsiTx<}, $w = 2111322543546$. The standard subwords are $w^1 = 125346, w^2 = 13245, w^3 = 21$ with charges $2, 3, 1$ respectively, giving $\ch(T) = 2+3+1 = 6$.
	
	Note that this construction assigns to each box $x$ of the Young diagram of $T$ a weight $\ch_T(x)$ (namely, the index of the letter occupying $x$ within its standard subword), and the total charge $\ch(T)$ is the sum of these weights. Our weights $\psi_T(q,t)$ in \S\ref{sec:hook} and \S\ref{sec:smallcases}, \S\ref{sec:nk1} are based on this box-by-box version of charge.

	\section{Basic properties of $b_{\lambda,\mu}(q,t)$}\label{sec:basic}
	
	In this section we recall our main object of study, the $b_{\lambda,\mu}(q,t)$'s. We prove that $b_{\lambda,\mu}(t+n,n) \in \ZZ_{\geq 0}[t]$ by using raising operator techniques.
	
	\subsection{Definition and triangularity}
	We recall the definition of $b_{\lambda,\mu}(q,t)$ from \S\ref{sec:intro}.
	
	\begin{definition}\label{Bdef}
		For $\lambda,\mu \in \Par$, define $b_{\lambda,\mu}(q,t) \in \ZZ[q,t]$ by one of the following equivalent equations: \begin{enumerate}
			\item $P_{\lambda}(t) = \sum_{\mu}b_{\lambda,\mu}(q,t)P_{\mu}(q)$,
			\item $Q'_{\mu}(q) = \sum_{\lambda}b_{\lambda,\mu}(q,t) Q'_{\lambda}(t)$,
			\item $b_{\lambda,\mu}(q,t) = \langle P_{\lambda}(t), Q'_{\mu}(q) \rangle$\,,
		\end{enumerate}
		where $\langle -,-\rangle$ denotes the Hall scalar product.
	\end{definition}
	
	\begin{proof}[Proof of equivalence]
		The equivalence of all three is a consequence of $(P_\lambda(q))_{\lambda \in \Par}$ and $(Q'_\lambda(q))_{\lambda \in \Par}$ being dual bases with respect to the Hall scalar product.
	\end{proof}
	
	\begin{proposition}[Triangularity]\label{prop:triangularity}
		Let $\lambda,\mu \in \Par$. Then
		$b_{\lambda,\mu}(q,t) = 0$ unless $\mu \leq \lambda$ in dominance order, and $b_{\lambda,\lambda}(q,t) = 1$.
	\end{proposition}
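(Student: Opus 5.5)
The plan is to read off triangularity from the unitriangularity of the Hall--Littlewood functions with respect to the Schur basis, through the factorization $b(q,t)=K(t)^{-1}K(q)$.

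First, by the defining theorem of Hall--Littlewood functions in \S\ref{sec:HL}, for each parameter $u\in\{q,t\}$ we have $P_\lambda(u)\in s_\lambda+\sum_{\nu<\lambda}\ZZ[u]s_\nu$. Restricting to partitions of a fixed size $N$ and ordering them by any linear extension of dominance, the transition matrix $A(u)$ from $(P_\lambda(u))$ to $(s_\nu)$ is therefore unitriangular. Its inverse is the Kostka--Foulkes matrix $K(u)$, since $s_\lambda=\sum_\mu K_{\lambda,\mu}(u)P_\mu(u)$. The inverse of a unitriangular matrix whose support respects a partial order is again unitriangular with support in that order; this follows from the Neumann series for $(I-N)^{-1}$ with $N$ strictly lower triangular, where each nonzero term is a chain $\lambda>\nu_1>\cdots>\mu$. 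This recovers the facts $K_{\lambda,\lambda}(u)=1$ and $K_{\lambda,\mu}(u)=0$ unless $\mu\le\lambda$ recorded in \S\ref{sec:kostkafoulkes}. The integrality $K_{\lambda,\mu}(u)\in\ZZ[u]$ comes out of the same expansion.

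Next, substituting the Schur expansion of $P_\lambda(t)$ and then expanding each $s_\nu$ in the $P(q)$-basis gives
\[
P_\lambda(t)=\sum_{\nu\le\lambda}A_{\lambda,\nu}(t)\,s_\nu=\sum_{\nu\le\lambda}\sum_{\mu\le\nu}A_{\lambda,\nu}(t)K_{\nu,\mu}(q)P_\mu(q).
\]
Hence $b_{\lambda,\mu}(q,t)=\sum_{\mu\le\nu\le\lambda}A_{\lambda,\nu}(t)K_{\nu,\mu}(q)$. This sum is empty unless $\mu\le\lambda$, by transitivity of dominance. When $\mu=\lambda$ the only term is $\nu=\lambda$, giving $1\cdot1=1$. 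The same formula shows $b_{\lambda,\mu}(q,t)\in\ZZ[q,t]$, which justifies the ring stated in Definition~\ref{Bdef}.

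No step is a real obstacle. The only point needing care is that the inverse of an order-unitriangular matrix stays supported on the order, since a linear extension alone would only give triangularity in the total order. The Neumann-series chain argument settles this.
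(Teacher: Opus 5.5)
Your proposal is correct and matches the paper's argument: the paper also writes $b(q,t)=K(t)^{-1}K(q)$ and concludes from the fact that $K(u)$, and hence its inverse, is unitriangular with respect to dominance order. Your formula $b=A(t)K(q)$ is the same factorization with $A(t)=K(t)^{-1}$, and your Neumann-series chain argument just spells out the step ``hence so is its inverse,'' which the paper leaves implicit.
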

	\begin{proof}
		The Kostka--Foulkes matrix $K(u)$ is upper-unitriangular with respect to dominance order (\S\ref{sec:kostkafoulkes}), hence so is its inverse. Since $b(q,t) = K(t)^{-1}K(q)$ is a product of two unitriangular matrices, it is upper-unitriangular as well.
	\end{proof}
	
	We also note that  $b_{\lambda,\mu}(t,t) = \delta_{\lambda,\mu}$, since $b(t,t) = K(t)^{-1}K(t) = \mathrm{Id}$.
	
	In this paper we would like to advertise the following conjecture, already stated in the introduction (\autoref{conj:B}).
	\begin{conjecture}\label{conj:B-body}
		For $\lambda,\mu \in \Par$,	$b_{\lambda,\mu}(t+1,t) \in \mathbb{Z}_{\geq 0}[t]$.
	\end{conjecture}
	We have verified this conjecture in \texttt{SageMath} for all partitions of size $\leq 26$. We prove it when $\lambda$ is a hook (\autoref{thm:main}) and when $\lambda_1 \leq 2$ (\autoref{thm:twocol}), when $\mu = (n,k,1)$ for $n \geq k+1 \geq 3$ (\autoref{thm:nk1}), and some more simple cases in \S\ref{sec:smallcases}.
	
	\subsection{Proof of positivity for $(q,t)=(t+n,n)$}\label{sec:postplusn}
	
	We use standard notation for root systems of type $A$. Let $\varepsilon_i$ denote the vector in $\ZZ^m$ with all coordinates $0$ except the $i$th coordinate, which is $1$. The positive roots are $R^m_+ = \{\varepsilon_i - \varepsilon_j \,| \, 1 \leq i<j \leq m\} \subset \ZZ^m$; the positive part of the root lattice $Q^m_+$ is the set of all $\ZZ_{\geq 0}$-linear combinations of elements of $R^m_+$. For a vector $\mu \in Q^m_+$, a Kostant partition of $\mu$ is a tuple $\kappa = (c_\alpha)_{\alpha \in R^m_+}$ with $c_\alpha \in \ZZ_{\geq 0}$ such that $\sum_{\alpha \in R^m_+}^{} c_\alpha \alpha = \mu$. We let $|\kappa| = \sum_{\alpha \in R^m_+}^{} c_\alpha$, the sum of multiplicities of $\kappa$, and $\# \kappa = \# \{\alpha \in R^m_+\,| \, c_\alpha \neq 0\}$, the cardinality of the support of $\kappa$. For $\mu \in Q^m_+$, we let
	\begin{align*}
		\cP(\mu;q,t) = \sum_{\kappa}^{} (q-t)^{\# \kappa} q^{|\kappa|-\# \kappa},
	\end{align*} where the sum runs over all Kostant partitions $\kappa$ of $\mu$.
	Then
	\begin{align}\label{eq:prod=sumKp}
		\prod_{\alpha \in R^m_+}^{} \dfrac{1-tx^\alpha}{1-qx^\alpha} = \prod_{\alpha \in R^m_+}^{} \bigg( 1 + (q-t) \sum_{n \in \ZZ_{\geq 1}}^{} q^{n-1} x^{n \alpha} \bigg) = \sum_{\mu \in Q^m_+}^{} \cP(\mu;q,t) x^\mu
	\end{align}
	
	\begin{theorem}\label{thm:bqtpn}
		$b_{\lambda,\mu}(t+n,n) \in \mathbb{Z}_{\geq 0}[t]$ for every $n \in \mathbb{Z}_{\geq 0}$.
	\end{theorem}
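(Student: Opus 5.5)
The plan is to factor the raising-operator formula \eqref{eq:Qp=hraisingop} for $Q'_\mu(q)$ through $Q'(t)$, then specialize $(q,t)=(t+n,n)$ and check that every coefficient that appears is a polynomial in $t$ with nonnegative coefficients. The case $n=0$ is already known: $b_{\lambda,\mu}(t,0)$ is the entry of $K(0)^{-1}K(t)=K(t)$, since $K(0)=\mathrm{Id}$. So it equals $K_{\lambda,\mu}(t)$, which lies in $\ZZ_{\geq 0}[t]$ by Lascoux--Sch\"utzenberger. I therefore assume $n\ge 1$.

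\emph{Step 1 (factorization).} Fix $m\ge \ell(\mu)$ and write
\begin{align*}
\prod_{\alpha\in R^m_+}\dfrac{1-x^\alpha}{1-qx^\alpha}=\prod_{\alpha\in R^m_+}\dfrac{1-tx^\alpha}{1-qx^\alpha}\cdot\prod_{\alpha\in R^m_+}\dfrac{1-x^\alpha}{1-tx^\alpha}.
\end{align*}
Expand the first factor by \eqref{eq:prod=sumKp} and use that $Q'_\beta(t)$ is defined by the same raising-operator formula for every $\beta\in\ZZ^m$. Linearity of $\mathbf{h}$ then gives
\begin{align*}
Q'_\mu(q)=\sum_{\nu\in Q^m_+}\cP(\nu;q,t)\,Q'_{\mu+\nu}(t).
\end{align*}
The sum is effectively finite: all terms have degree $|\mu|$, and terms with large $\nu$ vanish by the next step. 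At $(q,t)=(t+n,n)$ we get $\cP(\nu;t+n,n)=\sum_\kappa t^{\#\kappa}(t+n)^{|\kappa|-\#\kappa}$, which is visibly in $\ZZ_{\ge0}[t]$.

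\emph{Step 2 (straightening compositions to partitions).} It remains to show that each $Q'_\beta(n)$, for $\beta=\mu+\nu\in\ZZ^m$, is a $\ZZ_{\ge0}$-combination of the $Q'_\lambda(n)$ with $\lambda$ a partition. Then taking the coefficient of $Q'_\lambda(n)$ and using Definition~\ref{Bdef}(2) gives $b_{\lambda,\mu}(t+n,n)\in\ZZ_{\ge0}[t]$.

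Two facts should suffice. First, if the last entry $\beta_m$ is negative then $Q'_\beta(t)=0$. In the raising-operator expansion every monomial $x^{\beta+\gamma}$ with $\gamma\in Q^m_+$ has last coordinate at most $\beta_m<0$, so $\mathbf{h}$ kills it. Second, Proposition~\ref{prop:Qpstraightening} rewrites $Q'_{(\ldots,s,r,\ldots)}$ with $s<r$ as a combination of terms whose two consecutive entries are weakly decreasing. At $t=n\ge1$ its coefficients $n$, $n^{i+1}-n^{i-1}$ and $n^m-n^{m-1}$ are all $\ge 0$. I will check that the rule, being derived from the raising-operator definition, holds for arbitrary integers $s<r$, including negative ones.

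I then run a bubble-sort: repeatedly straighten an adjacent ascent. Each new composition $(\ldots,r,s,\ldots)$ or $(\ldots,r-i,s+i,\ldots)$ with $i\le (r-s)/2$ is strictly larger than $(\ldots,s,r,\ldots)$ in the dominance order on compositions, since its partial sums increase. Only finitely many compositions of $|\mu|$ in $\ZZ^m$ have bounded-below entries and dominate $\beta$. Bounded-below entries should hold because entries only move toward the average. Hence the process terminates in weakly decreasing sequences. Such a sequence either has a negative last entry, and so vanishes, or is a partition, possibly with trailing zeros, which is harmless.

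\emph{Main obstacle.} The delicate point is Step 2: justifying the straightening rule and the termination argument for compositions with negative or zero entries, and making sure nothing with negative entries survives except via the vanishing criterion. If bounding the entries from below proves awkward, I would instead induct on the quantity $\sum_i |\beta_i-\bar\beta|^2$, which strictly decreases under each straightening move. The Kostant-partition positivity in Step 1 is routine.
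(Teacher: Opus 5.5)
Your proposal is correct and follows the paper's proof. Both factor the raising-operator product to get $Q'_\mu(q)=\sum_{\nu}\cP(\nu;q,t)\,Q'_{\mu+\nu}(t)$, note that $\cP(\nu;t+n,n)\in\ZZ_{\geq0}[t]$, straighten using the nonnegative coefficients of Proposition~\ref{prop:Qpstraightening} at $t=n\geq1$, and handle $n=0$ via $b_{\lambda,\mu}(t,0)=K_{\lambda,\mu}(t)$; the paper simply cites Macdonald for the straightening step, while you spell out the vanishing criterion and termination. One caveat: your fallback potential $\sum_i|\beta_i-\bar\beta|^2$ does not strictly decrease on the swap term $t\,Q'_{(\ldots,r,s,\ldots)}$, so rely on your dominance-plus-bounded-entries termination argument instead, which is sound.
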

	
	\begin{proof}
		For $\mu \in \Par$, taking $m \geq \ell(\mu)$, we can consider $\mu = (\mu_1,\ldots,\mu_m) \in \ZZ^m$. Using \eqref{eq:Qp=hraisingop} and \eqref{eq:prod=sumKp}, for $m$ variables, \begin{align}
			Q'_\mu(q) &= \mathbf{h}\bigg( \prod_{\alpha \in R^m_+}^{} \dfrac{1-x^\alpha}{1-qx^\alpha} x^\mu\bigg)
			\nonumber
			\\
			&= \mathbf{h}\bigg( \bigg(\sum_{\nu \in Q^m_+}^{} \cP(\nu;q,t) x^\nu\bigg) \prod_{\alpha \in R^m_+}^{} \dfrac{1-x^\alpha}{1-tx^\alpha} x^\mu \bigg)
			\nonumber
			\\
			&= \sum_{\nu \in Q^m_+}^{} \cP(\nu;q,t) Q'_{\mu+\nu}(t).
			\label{eq:Q'KostantPartitionexp}
		\end{align}
		
		\noindent By \autoref{prop:Qpstraightening}, if $t = n \in \ZZ_{\geq 1}$, then for any $\nu \in \ZZ^m$, $Q'_\nu(n)$ is a non-negative integer combination of the $Q'_\lambda(n)$ for $\lambda \in \Par$ (see \cite[Ch. III.2, Ex. 2]{MacMainBook}). Then for $q = t+n$, since $\cP(\nu;t+n,n) \in \ZZ_{\geq 0}[t]$, we get that $b_{\lambda,\mu}(t+n,n) \in \ZZ_{\geq 0}[t]$.
		
		Already, for $n=0$, $b_{\lambda,\mu}(t,0) = K_{\lambda,\mu}(t) \in \ZZ_{\geq 0}[t]$. This proves the statement for all $n \in \ZZ_{\geq 0}$.
	\end{proof}
	
	\section{Stability and reduction principles}\label{sec:stability}
	
	Computation of $b_{\lambda,\mu}(q,t)$ can often be reduced to smaller shapes in particular cases. For example if $\lambda,\mu$ both have the same first part, then one can consider the pair with the first part removed and compute it. In this section we record some more of such properties.
	
	\subsection{A constant term expression}
	
	We recall the main result from \cite{Zabrocki_thesis}, to get an expression for $b_{\lambda,\mu}(q,t)$. Let 
	\begin{equation}
		\Delta_t[Z_k] = \prod_{1 \leq i < j \leq k}^{} \dfrac{1-z_jz_i^{-1}}{1-tz_jz_i^{-1}} \qquad\hbox{ and }\qquad Z_k = z_1+\cdots + z_k.
	\end{equation}
	Then for $k \geq \ell(\mu)$,
	\begin{align}
		Q'_\mu(t)[X] = [z^\mu]\Exp[XZ_k] \Delta_t[Z_k],
	\end{align} where the right hand side is the coefficient of $z^\mu$ in $\Exp[XZ_k] \Delta_t[Z_k]$. Here we are identifying $X = x_1+x_2+\ldots$ as before.
	Then, using $ \langle \Exp[AX], f[X] \rangle = f[A] $ (see \cite[(2.6)]{HaimanGeometry}) and (3) of \autoref{Bdef} we get
	\begin{align}\label{eq:bconstterm}
		b_{\lambda,\mu}(q,t) &= [z^\mu] \Delta_q[Z_k] P_\lambda(t)[Z_k].
	\end{align}
	
	\subsection{Stability}
	
	Note that the Kostka--Foulkes polynomials satisfy the following stability property.
	
	\begin{proposition}\label{prop:Kstability}
		Suppose $\lambda \geq \mu$ are two partitions. If $\mu_1 \geq \lambda_2$ then $K_{\lambda,\mu}(t) = K_{\lambda+(r),\mu+(r)}(t)$  where $\lambda+(r)=(\lambda_1+r,\lambda_2,\ldots)$ and $r\geq 0$.
	\end{proposition}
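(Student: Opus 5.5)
Since $K_{\lambda,\mu}(t)=\sum_{T\in\SSYT(\lambda,\mu)}t^{\ch(T)}$, the natural route is a charge-preserving bijection
\[
\Phi:\SSYT(\lambda,\mu)\to\SSYT(\lambda+(r),\mu+(r)),
\]
defined by inserting $r$ extra $1$'s at the left end of the first row of $T$. The output is semistandard: in any $T$ the $1$'s fill an initial segment of row $1$, so prepending $1$'s keeps rows weakly increasing. Columns remain strictly increasing because the new first row dominates the old one entrywise, and rows $\ge 2$ never contain a $1$. The content changes from $\mu$ to $\mu+(r)$ and the shape from $\lambda$ to $\lambda+(r)$, so $\Phi$ is well defined and injective. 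It remains to prove that $\Phi$ is surjective and preserves charge.

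\emph{Surjectivity.} Take $S\in\SSYT(\lambda+(r),\mu+(r))$. All $\mu_1+r$ ones of $S$ lie in row $1$, so row $1$ begins with $\mu_1+r\ge r$ ones. Deleting $r$ of them gives a tableau of shape $\lambda$ and content $\mu$. Its columns stay strictly increasing by the hypothesis $\mu_1\ge\lambda_2$. After deletion, the entry in position $(1,j)$ for $j\le\lambda_2$ is still a $1$, since $j\le\lambda_2\le\mu_1$. Hence every entry of row $2$ sits below a $1$, and all entries of row $2$ are at least $2$. So the deletion is a valid inverse to $\Phi$. This is precisely where the hypothesis is used, and without it surjectivity fails.

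\emph{Charge preservation.} The reverse reading word of $\Phi(T)$ is that of $T$ with $r$ extra $1$'s inserted right after the first row's non-$1$ letters, that is, at the end of the row-$1$ block and before all lower rows. I would argue via the standard-subword extraction:
\begin{itemize}
\item The first $\mu_2$ subwords each begin at a $1$, taking the leftmost available $1$. These $1$'s are at the end of the first row block, in the same relative positions in both words, because the new $1$'s sit among the old ones and all $1$'s are interchangeable.
\item Hence the choice of the subsequent letters $2,3,\dots$ is identical in both words, and the first $\mu_2$ subwords coincide up to which $1$ is used.
\item The remaining subwords use only $1$'s, once every letter $\ge 2$ has been exhausted. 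They are the single letter $1$ and have charge $0$.
\end{itemize}
This step is the main obstacle, because one must check that the extra $1$'s never change the cyclic "first occurrence to the right" choices. It works because all $1$'s occupy a contiguous block, so choosing a different $1$ from that block does not alter which letter $2$ is first to its right.

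An alternative that sidesteps the charge bookkeeping is to use the Morris-type recursion, or the constant-term/raising-operator formula \eqref{eq:Qp=hraisingop} restricted to the first coordinate. I would fall back on this only if the word argument became messy.
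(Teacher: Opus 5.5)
Your proposal is correct and follows the paper's proof: you use the same bijection (prepend $r$ ones to the first row), and charge is preserved because the $1$'s form a contiguous block in the reverse reading word, so the standard subwords containing letters $\geq 2$ are unchanged and the extra subwords are singletons $1$ of charge $0$; you also make explicit that $\mu_1 \geq \lambda_2$ is what gives surjectivity, which the paper leaves implicit. One small slip: column strictness of $\Phi(T)$ holds because the new first row is entrywise $\leq$ the old one (new entry $T(1,j-r) \leq T(1,j)$), not because it dominates it.
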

	\begin{proof}
		There is a bijection $\SSYT(\lambda,\mu) \to \SSYT(\lambda+(r),\mu+(r))$ where a tableau $T$ maps to the tableau obtained by moving the top row $r$ squares to the right and inserting $r$ $1$'s in the squares vacated. This preserves charge, because the new standard subwords are just singleton $1$, and the old subwords are unchanged.
	\end{proof}
	
	We now prove a generalization of this for the $b_{\lambda,\mu}(q,t)$'s.
	\begin{proposition}\label{prop:stability}
		Suppose $\lambda \geq \mu$ are two partitions. If $\mu_1 > \lambda_2$ then $b_{\lambda,\mu}(q,t) = b_{\lambda+(r),\mu+(r)}(q,t)$.
	\end{proposition}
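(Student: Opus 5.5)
The plan is to bypass matrix manipulations with $K(t)^{-1}K(q)$ and work directly with the constant term expression \eqref{eq:bconstterm}, $b_{\lambda,\mu}(q,t) = [z^\mu]\,\Delta_q[Z_k]\,P_\lambda(t)[Z_k]$, isolating the variable $z_1$. The matrix route looks unreliable. The truncation of $K(u)$ to the dominance interval $[\mu,\lambda]$ is not carried isomorphically onto $[\mu+(r),\lambda+(r)]$ by adding $r$ to the first part. Moreover, \autoref{prop:Kstability} fails on intermediate pairs. For example, with $\lambda=(8)$ and $\mu=(1^8)$, the pair $(4,4),(3,3,2)$ is not stable.

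\textbf{Step 1: separate $z_1$.} Write $Z_k = z_1 + Z'$ with $Z' = z_2+\cdots+z_k$. Factor
\begin{align*}
\Delta_q[Z_k] = \Delta_q[Z']\prod_{j=2}^{k}\frac{1-z_jz_1^{-1}}{1-qz_jz_1^{-1}} = \Delta_q[Z']\sum_{d\geq 0} z_1^{-d}\, g_d(Z';q),
\end{align*}
where $g_d$ is a polynomial in $z_2,\dots,z_k$ of degree $d$ that is independent of $\lambda,\mu$ and $r$. For $P_\lambda(t)[z_1+Z']$, use the one-variable branching rule for Hall--Littlewood functions. This is the $q=0$ case of Macdonald's rule, and it is equivalent to the tableau formula of \S\ref{sec:monexpform} with the $1$'s peeled off:
\begin{align*}
P_\lambda(t)[z_1+Z'] = \sum_{\nu}\psi_{\lambda/\nu}(t)\, z_1^{|\lambda/\nu|}\, P_\nu(t)[Z'].
\end{align*}
Here the sum is over $\nu$ with $\lambda/\nu$ a horizontal strip, and $\psi_{\lambda/\nu}(t)=\prod_{i\in I}(1-t^{m_i(\nu)})$, where $I$ is the set of $i$ such that $\lambda/\nu$ has a box in column $i+1$ but not in column $i$.

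\textbf{Step 2: extract $z_1^{\mu_1}$.} This gives
\begin{align*}
b_{\lambda,\mu}(q,t) = \sum_{d\ge 0}\ \sum_{\nu:\ |\lambda/\nu| = \mu_1+d}\psi_{\lambda/\nu}(t)\,[z'^{\bar\mu}]\,\Delta_q[Z']\,g_d(Z';q)\,P_\nu(t)[Z'],
\end{align*}
where $\bar\mu=(\mu_2,\mu_3,\dots)$. Replacing $(\lambda,\mu)$ by $(\lambda+(r),\mu+(r))$ leaves $\bar\mu$, $d$ and $g_d$ unchanged, and shifts the strip-size condition to $|(\lambda+(r))/\nu| = \mu_1+r+d$. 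It therefore suffices to show two things. First, $\nu\mapsto\nu$ is a bijection between horizontal strips $\lambda/\nu$ of size $\mu_1+d$ and horizontal strips $(\lambda+(r))/\nu$ of size $\mu_1+r+d$. Second, $\psi_{\lambda/\nu}(t)=\psi_{(\lambda+(r))/\nu}(t)$.

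\textbf{Step 3: the key combinatorial check.} This is where the hypothesis $\mu_1>\lambda_2$ enters, and I expect it to be the main point to get right. Interlacing gives $\nu_i\ge\lambda_{i+1}$, so the part of the strip below row $1$ has size $\sum_{i\ge2}(\lambda_i-\nu_i)\le\sum_{i\geq 2}(\lambda_i-\lambda_{i+1})=\lambda_2$. Hence the row-$1$ part satisfies $\lambda_1-\nu_1\ge\mu_1+d-\lambda_2>0$, so $\nu_1<\lambda_1$ strictly. The interlacing conditions for $\lambda+(r)$ and $\lambda$ differ only in $\nu_1\le\lambda_1$ versus $\nu_1\le\lambda_1+r$. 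Given $\nu$, the strip sizes differ by exactly $r$, so the bijection holds on both sides: in the reverse direction, the same bound shows $\nu_1 < \lambda_1$ automatically. For $\psi$, adding $r$ to $\lambda_1$ only appends boxes of the strip in columns $\lambda_1+1,\dots,\lambda_1+r$ of row $1$. Since $\nu_1<\lambda_1$, column $\lambda_1$ already contains a strip box, so no new index enters $I$. Also $\nu$ itself, and hence each $m_i(\nu)$, is unchanged. Thus $\psi$ is preserved, and the two sums agree term by term, proving the proposition.
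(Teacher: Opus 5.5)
Your proof is correct and follows essentially the same route as the paper: both start from the constant term expression \eqref{eq:bconstterm} and use the fact that $\Delta_q[Z_k]$ contains only non-positive powers of $z_1$, so the $z_1$-exponent extracted from $P_\lambda(t)[Z_k]$ always exceeds $\lambda_2$. The difference is in packaging. The paper treats $r=1$ and inducts on $r$, reducing via the tableau formula \eqref{eq:psiTx} to the monomial identity $[z^{\alpha+(1)}]P_{\lambda+(1)}(t)[Z_k]=[z^\alpha]P_\lambda(t)[Z_k]$ for $\alpha_1>\lambda_2$. You instead peel off $z_1$ with the one-variable branching rule and check the horizontal-strip bijection and the invariance of $\psi_{\lambda/\nu}(t)$ directly for all $r$, which makes the use of $\mu_1>\lambda_2$ more explicit than the paper's one-line ``no leg'' justification.
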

	
	\begin{proof}
		It suffices to treat the case $r=1$; the general case follows by induction. From \eqref{eq:bconstterm}, writing $\Delta_q[Z_k] = \sum_{\kappa}^{} \mathcal{P'}(\kappa;q) z^\kappa$, where $\mathcal{P'}(\kappa;q)$ denotes the coefficient of $z^\kappa$ in the expansion of $\Delta_q[Z_k]$, and extracting the coefficient of $z^{\mu+(1)}$ gives
		\begin{align*}
			b_{\lambda+(1),\mu+(1)}(q,t) &= \sum_{\kappa \in \ZZ^k}^{} \mathcal{P'}(\kappa;q) \cdot [z^{\mu+(1)-\kappa}] P_{\lambda+(1)}(t)[Z_k].
		\end{align*}
		Note that $\mu-\kappa$ may not be a partition. However, since in $\Delta_q[Z_k]$ only negative powers of $z_1$ appears, the sum only runs over $\kappa$ with $\kappa_1\leq 0$, and so $(\mu-\kappa)_1 = \mu_1 - \kappa_1> \lambda_2$. So it suffices to prove $[z^{\alpha+(1)}]P_{\lambda+(1)}(t)[Z_k] = [z^\alpha] P_\lambda(t)[Z_k]$ for a composition $\alpha$ when $\alpha_1>\lambda_2$. But this follows from the $\psi_T$ formula \eqref{eq:psiTx}, since the extra box containing $1$ has no leg.
	\end{proof}
	
	\begin{remark}\label{rem:stabilityfinitecheck}
		\autoref{prop:stability} says that whenever $\mu_1 > \lambda_2$, the coefficient $b_{\lambda,\mu}(q,t)$ depends only on the reduced pair $(\lambda-(r), \mu-(r))$ where $r=\mu_1-\max(\lambda_2+1,\mu_2)$. In particular, any infinite family of identities (or positivities) with fixed reduced pair and $\mu_1 > \lambda_2$ reduces to a single finite computation. We use this repeatedly in \S\ref{sec:smallcases}.
	\end{remark}
	
	Since \[ Q'_{\lambda}(q)|_{h_\mu} = b_{\mu,\lambda}(q,1) ,\] the above implies
	
	\begin{corollary}
		Let $\lambda,\mu \in \Par$ such that $\lambda_1 > \mu_2$. Then \begin{align*}
			Q'_\lambda(q)|_{h_\mu} = Q'_{\lambda + (1)}(q)|_{h_{\mu + (1)}}.
		\end{align*}
	\end{corollary}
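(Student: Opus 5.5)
The plan is to specialize \autoref{prop:stability} at $q=1$ and then rewrite the resulting statement through duality, so that no new computation is needed.

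First, identify the coefficient of $h_\mu$ in $Q'_\lambda(q)$. By (2) of \autoref{Bdef},
\begin{align*}
Q'_\lambda(q) = \sum_{\nu} b_{\nu,\lambda}(q,t)\, Q'_\nu(t).
\end{align*}
Both sides are polynomial in $t$, so we may specialize $t=1$. Since $Q'_\nu(1)=h_\nu$ and the $h_\nu$ form a basis, this gives $Q'_\lambda(q)|_{h_\mu} = b_{\mu,\lambda}(q,1)$, which is the identity displayed before the corollary. I would check that specializing $t=1$ is legitimate. This holds because $b_{\nu,\lambda}(q,t)\in\ZZ[q,t]$, by the remark in \autoref{Bdef}, or equivalently by the formula $b=\langle P_\nu(t),Q'_\lambda(q)\rangle$ with $P_\nu(t)\in\Lambda[t]$.

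Second, apply \autoref{prop:stability} to the pair $(\mu,\lambda)$: there $\mu$ plays the role of the proposition's $\lambda$, and $\lambda$ plays the role of its $\mu$. The hypothesis $\lambda_1>\mu_2$ is exactly the proposition's condition that the first part of the lower partition exceeds the second part of the upper one. The proposition also assumes dominance, $\mu\ge\lambda$. If $\mu\not\ge\lambda$, then $\mu+(1)\not\ge\lambda+(1)$ as well, because adding $1$ to the first part shifts every partial sum by the same amount. In that case both sides vanish by \autoref{prop:triangularity}, so the statement holds trivially.

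In the remaining case $\mu\ge\lambda$, the proposition gives $b_{\mu,\lambda}(q,t)=b_{\mu+(1),\lambda+(1)}(q,t)$ as polynomials. Specializing $t=1$ and using the first step on both sides yields the corollary.

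The only delicate point is the legitimacy of the $t=1$ specialization, together with the dominance bookkeeping in the degenerate case. Both are routine.
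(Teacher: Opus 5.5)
Your proposal is correct and is the paper's argument: specialize $t=1$ in (2) of \autoref{Bdef} to get $Q'_\lambda(q)|_{h_\mu}=b_{\mu,\lambda}(q,1)$, then apply \autoref{prop:stability} to the pair $(\mu,\lambda)$. Your extra step for the case $\mu\not\ge\lambda$ is correct and fills a detail the paper leaves implicit: adding $1$ to both first parts preserves the dominance relation, so both sides vanish by \autoref{prop:triangularity}.
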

	
	The strict inequality is necessary: the statement fails when $\lambda_1 = \mu_2$. For example, if $\lambda = (2,2,1), \mu = (3,2)$ then $Q'_{(2,2,1)}(q)|_{h_{(3,2)}} = q^2-1$ and $Q'_{(3,2,1)}(q)|_{h_{(4,2)}} = q^2-q$.
	
	\subsection{Reduction principles}
	
	The statements analogous to \cite{Bhattacharya22} hold true, since each of these statements is true for the Kostka--Foulkes and inverse Kostka--Foulkes matrices.
	
	\begin{proposition}\label{prop:reductionprinc}
		Let $\lambda, \mu \in \Par$ with $\lambda \geq \mu$. Then
		\begin{enumerate}
			\item If $\lambda_1 + \cdots + \lambda_k = \mu_1 + \cdots + \mu_k$ then
			\begin{align*}
				b_{\lambda,\mu}(q,t) = b_{(\lambda_1,\ldots,\lambda_k),(\mu_1,\ldots,\mu_k)}(q,t) \cdot b_{(\lambda_{k+1},\lambda_{k+2},\ldots),(\mu_{k+1},\mu_{k+2},\ldots)}(q,t).
			\end{align*}
			
			\item If $\lambda'_1 + \cdots + \lambda'_k = \mu'_1 + \cdots + \mu'_k$ then
			\begin{align*}
				b_{\lambda,\mu}(q,t) = b_{(\lambda'_1,\ldots,\lambda'_k)',(\mu'_1,\ldots,\mu'_k)'}(q,t) \cdot b_{(\lambda'_{k+1},\lambda'_{k+2},\ldots)',(\mu'_{k+1},\mu'_{k+2},\ldots)'}(q,t).
			\end{align*}
			
			\item If $\lambda,\mu \subseteq (m^k)$ and $\lambda^c, \mu^c$ are the complementary partitions to $\lambda,\mu$ inside the rectangle $(m^k)$, then
			\begin{align*}
				b_{\lambda,\mu}(q,t) = b_{\lambda^c,\mu^c}(q,t).
			\end{align*}
		\end{enumerate}
	\end{proposition}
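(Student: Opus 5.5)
The plan is to reduce all three statements to the corresponding known facts for the Kostka--Foulkes matrix $K(u)$ and its inverse, as the sentence preceding the proposition suggests. Write $b(q,t)=K(t)^{-1}K(q)$, so that
\begin{align*}
b_{\lambda,\mu}(q,t)=\sum_{\nu} (K(t)^{-1})_{\lambda,\nu}\,K_{\nu,\mu}(q).
\end{align*}
By \autoref{prop:triangularity} and the unitriangularity of $K(u)$, only $\nu$ with $\mu\le\nu\le\lambda$ in dominance order contribute.

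For (1), the condition $\lambda_1+\cdots+\lambda_k=\mu_1+\cdots+\mu_k$ together with $\mu\le\nu\le\lambda$ forces $\nu_1+\cdots+\nu_k$ to equal this common value. So every intermediate $\nu$ splits as a concatenation $\nu=(\nu^{(1)},\nu^{(2)})$, where $\nu^{(1)}=(\nu_1,\ldots,\nu_k)$ and $\nu^{(2)}=(\nu_{k+1},\ldots)$, with $\mu^{(i)}\le\nu^{(i)}\le\lambda^{(i)}$ for $i=1,2$. I then use the factorization $K_{\nu,\mu}(u)=K_{\nu^{(1)},\mu^{(1)}}(u)\,K_{\nu^{(2)},\mu^{(2)}}(u)$ whenever the first $k$ parts have equal sums.

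This factorization is the classical one, and it can be seen from charge. In an SSYT of shape $\nu$ and content $\mu$ with equal partial sums, the first $k$ rows contain exactly the letters $1,\ldots,\mu_1+\cdots$, that is, the letters $1,\dots,k$ fill the top $k$ rows. The charge then splits additively; alternatively, this is \cite{Bhattacharya22}'s argument for $K$. The same factorization then holds for $K(t)^{-1}$, since the block structure is respected: restrict to the interval $[\mu,\lambda]$, which is a product of two intervals, on which $K$ is a tensor product of matrices. Multiplying the two tensor-product matrices gives (1).

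For (2) I use the same argument with column sums. Dominance on conjugates reverses, so the condition on $\lambda'$ and $\mu'$ again pins $\nu'_1+\cdots+\nu'_k$ for every intermediate $\nu$. This gives the product decomposition of the interval and the corresponding factorization of $K(u)$, which is known from \cite{Bhattacharya22}. For (3), complementation in the rectangle $(m^k)$ is an order-preserving bijection of dominance order restricted to partitions inside the rectangle. The needed input is $K_{\nu,\mu}(u)=K_{\nu^c,\mu^c}(u)$ for $\nu,\mu\subseteq(m^k)$. Given that, the interval $[\mu,\lambda]$ is carried isomorphically onto $[\mu^c,\lambda^c]$ with matching $K$-entries, so matrix inverses and products agree.

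The main obstacle is justifying that every $\nu$ in the interval stays inside the rectangle in (3), and likewise for the conjugate versions. I would verify this directly. Dominance $\mu\le\nu\le\lambda$ gives $\nu_1\le\lambda_1\le m$ and $\ell(\nu)\le\ell(\mu)\le k$, so $\nu\subseteq(m^k)$. The rest is transport of the Kostka--Foulkes identities, which I take from the cited reference.
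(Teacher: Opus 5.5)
Your proposal is correct and follows the paper's route: the paper simply observes that all three reductions hold for $K(u)$ and $K(u)^{-1}$ and transfers them to $b(q,t)=K(t)^{-1}K(q)$. Your restriction to the dominance interval $[\mu,\lambda]$ (a product of intervals in (1) and (2), mapped isomorphically by complementation in (3)), together with the Kronecker-product factorization, just makes that transfer explicit. The one step you assert without argument, tableau-by-tableau additivity of charge in (1), is true but needs a line of justification. The subwords that run past the letter $k$ are the first $\mu_{k+1}\le\nu_k$ ones, and they have index $0$ at $k$ because the first $\nu_k$ columns of the top block read $1,\dots,k$. Alternatively, as the paper does, you may simply cite the known Kostka--Foulkes identities.
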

	
	\section{Single row and single column cases}\label{sec:singlerow}
	
	In this short section we find a formula for $b_{\lambda,\mu}(q,t)$ when $\lambda$ is a single row, by combining the Cauchy identity and the principal specialization formula. We also show that the same technique produces an expression for the case when $\mu$ is a single column.
	
	\subsection{$\lambda$ a single row}
	\begin{theorem}\label{th:b_singlerow}
		Let $n \in \ZZ_{\geq 1}$ and $\mu \vdash n$. Let $\ell = \ell(\mu)$. Then \begin{equation}
			b_{(n),\mu}(q,t) = q^{n(\mu)-\binom{\ell}{2}} \prod_{i=1}^{\ell-1}(q^i-t)
		\end{equation}
	\end{theorem}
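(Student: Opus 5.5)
We follow the strategy announced at the start of this section: combine the Cauchy identity with the principal specialization formula \eqref{eq:Qlamprinc}. By \eqref{eq:Pn=hpleth}, $P_{(n)}(t)[X] = \frac{1}{1-t}h_n[(1-t)X]$. Part (1) of \autoref{Bdef} gives $b_{(n),\mu}(q,t)$ as the coefficient of $P_\mu(q)$ in $P_{(n)}(t)$. So we expand $h_n[(1-t)X]$ in the $P(q)$-basis.

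\textbf{Step 1 (Cauchy).} The bases $(P_\mu(q))$ and $(Q_\mu(q))$ are dual for $\langle-,-\rangle_q$. Equivalently, $(P_\mu(q))$ and $(Q_\mu(q)[X/(1-q)]) = (Q'_\mu(q))$ are dual for the Hall product. Hence by \eqref{eq:hnCauchy},
\begin{align*}
h_n[XY] = \sum_{\mu\vdash n} P_\mu(q)[X]\, Q_\mu(q)\Big[\frac{Y}{1-q}\Big].
\end{align*}
We substitute $Y = 1-t$ plethystically. This is legitimate because $h_n[XY]$ is a polynomial identity in the power sums $p_k[X]$ and $p_k[Y]$, and $p_k[Y]\mapsto 1-t^k$ is a ring map. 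We obtain
\begin{align*}
P_{(n)}(t) = \frac{1}{1-t}\sum_{\mu\vdash n} Q_\mu(q)\Big[\frac{1-t}{1-q}\Big] P_\mu(q),
\qquad\text{so}\qquad
b_{(n),\mu}(q,t) = \frac{1}{1-t}\,Q_\mu(q)\Big[\frac{1-t}{1-q}\Big].
\end{align*}

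\textbf{Step 2 (principal specialization).} Apply \eqref{eq:Qlamprinc} with the parameter $q$ in place of $t$ and $u=t$:
\begin{align*}
Q_\mu(q)\Big[\frac{1-t}{1-q}\Big] = q^{n(\mu)}(t;q^{-1})_\ell = q^{n(\mu)}\prod_{i=0}^{\ell-1}(1-tq^{-i}).
\end{align*}
The $i=0$ factor $1-t$ cancels the prefactor $\frac{1}{1-t}$. For the remaining factors, write $1-tq^{-i} = q^{-i}(q^i-t)$ for $1\le i\le \ell-1$. Collecting the powers $q^{-\sum_{i=1}^{\ell-1} i} = q^{-\binom{\ell}{2}}$ gives
\begin{align*}
b_{(n),\mu}(q,t) = q^{n(\mu)-\binom{\ell}{2}}\prod_{i=1}^{\ell-1}(q^i-t),
\end{align*}
which is the claimed formula.

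\textbf{Main obstacle.} The only delicate point is justifying the specialization $Y=1-t$ inside the Cauchy identity. We must also confirm that the $Q'=Q[X/(1-q)]$ duality is the right one to use here; \eqref{eq:Q'=Qpleth} supplies exactly this. As a sanity check, at $t=0$ the formula gives $q^{n(\mu)}$, which is $K_{(n),\mu}(q)$. At $t=q$ with $\mu\ne(n)$, the $i=1$ factor vanishes, consistent with $b(t,t)=\mathrm{Id}$.
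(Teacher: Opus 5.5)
Your proposal is correct and is essentially the paper's own proof: apply the Cauchy identity to the dual pair $(P_\mu(q), Q'_\mu(q))$, specialize $Y=1-t$, and then evaluate $Q_\mu(q)\big[\frac{1-t}{1-q}\big]$ with \eqref{eq:Qlamprinc} taking $u=t$. Your extra bookkeeping is also correct: the justification of the plethystic substitution, the cancellation of the $i=0$ factor $1-t$, and the collection of $q^{-\binom{\ell}{2}}$.
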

	
	\begin{proof}
		By \eqref{eq:Pn=hpleth}, $P_{(n)}(t) = \dfrac{1}{1-t} h_n[X(1-t)]$ and by the Cauchy identity \eqref{eq:hnCauchy} gives
		\begin{align*}
			h_n[X(1-t)] = \sum_{\lambda \vdash n}^{} Q'_\lambda(q)[1-t]P_\lambda(q)[X].
		\end{align*}
		
		Hence
		\begin{align*}
			b_{(n),\lambda}(q,t) = \dfrac{1}{1-t} Q'_\lambda(q)[1-t] = \dfrac{1}{1-t}Q_\lambda(q)\bigg[\dfrac{1-t}{1-q}\bigg]
			= \dfrac{1}{1-t} q^{n(\lambda)} (t;q^{-1})_{\ell(\lambda)},
		\end{align*}
		by \eqref{eq:Q'=Qpleth} and \eqref{eq:Qlamprinc}. This gives the stated form of $b_{(n),\lambda}(q,t)$.
	\end{proof}
	
	\begin{corollary}
		Let $\lambda,\mu \in \Par$ such that $K_{\lambda,\mu}=1$. Then $b_{\lambda,\mu}(t+1,t) \in \mathbb{Z}_{\geq 0}[t]$.
	\end{corollary}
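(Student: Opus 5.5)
The plan is to reduce, via \autoref{prop:reductionprinc}, to the case where $\lambda$ is a single row, and then to read off positivity from \autoref{th:b_singlerow}. For the single-row case, \autoref{th:b_singlerow} gives
\begin{align*}
b_{(n),\mu}(t+1,t) = (t+1)^{n(\mu)-\binom{\ell}{2}} \prod_{i=1}^{\ell-1}\bigl((t+1)^i - t\bigr).
\end{align*}
The exponent $n(\mu)-\binom{\ell}{2}$ is nonnegative because $\mu_i \geq 1$ for $i \leq \ell$. Each factor $(t+1)^i - t = 1 + \sum_{j\geq 2}\binom{i}{j}t^j + (i-1)t$ lies in $\ZZ_{\geq 0}[t]$ for $i\geq 1$. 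So the single-row case is immediate. The trivial case $\lambda=\mu$ gives $b_{\lambda,\lambda}=1$ by \autoref{prop:triangularity}.

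Next, I would use the known characterization of multiplicity-free weight spaces, $K_{\lambda,\mu}=1$ (Berenstein--Zelevinsky). In the form I want, it says that the pair $(\lambda,\mu)$ splits, wherever a partial sum of rows or of columns coincides, into blocks. In each block either $\lambda$ and $\mu$ agree, or $\lambda$ is a single row, or $\lambda$ is a rectangle-complement of such a block. Each split is exactly the situation covered by parts (1) and (2) of \autoref{prop:reductionprinc}, which factor $b_{\lambda,\mu}(q,t)$ as a product of the corresponding smaller coefficients. Part (3) (complementation in a rectangle) handles blocks that become single rows only after complementing. Since products of elements of $\ZZ_{\geq 0}[t]$ stay in $\ZZ_{\geq 0}[t]$, this finishes the argument. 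Note that conjugate splits still produce single-row pieces or equal pieces, never single columns with $\lambda\neq\mu$, because $b_{(1^n),\mu}$ forces $\mu=(1^n)$.

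The main obstacle is making this structural reduction precise and self-contained. Rather than quoting the characterization, I would prove it by induction on $|\lambda|$ using the unique tableau $T\in\SSYT(\lambda,\mu)$. If $\lambda_1=\mu_1$, split off the first row via (1). If $\lambda'_1=\mu'_1$ (or the last columns agree), split off via (2). Otherwise I would argue that uniqueness forces $\lambda$ to have a single row, up to complement in a rectangle. The idea is that, absent any split, two rows of $\lambda$ longer than the slack in $\mu$ would allow two distinct fillings, obtained by swapping an entry between the first and second rows of $T$ while keeping semistandardness. Checking that this swap is always available when no reduction applies is the delicate combinatorial step. If it proves messy, I would fall back on citing the Berenstein--Zelevinsky classification and verifying that each of its elementary blocks is handled by \autoref{prop:reductionprinc} together with \autoref{th:b_singlerow}.
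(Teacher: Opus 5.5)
Your proposal is correct and follows the paper's proof. The paper also invokes the Berenstein--Zelevinsky classification \cite{BZ90} to reduce, via parts (1)--(3) of \autoref{prop:reductionprinc}, to the case $\lambda=(n)$, and then appeals to \autoref{th:b_singlerow}; your fallback of citing that classification rather than reproving it is exactly what the paper does, and your explicit check that $n(\mu)-\binom{\ell}{2}\geq 0$ and $(t+1)^i-t\in\ZZ_{\geq 0}[t]$ supplies the single-row positivity step the paper leaves as ``proved above.''
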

	
	\begin{proof}
		This is analogous to the argument from \cite{Bhattacharya22}. The classification of pairs $(\lambda,\mu)$ with $K_{\lambda,\mu}=1$, due to Berenshte\u{\i}n and Zelevinski\u{\i} \cite{BZ90}, says that after using the reduction principles from \autoref{prop:reductionprinc}, it suffices to consider the case when $\lambda$ is a single row, which is proved above.
	\end{proof}
	
	\subsection{$\mu$ a single column}
	We can use the same trick to give a plethystic expression for $b_{\lambda,(1^n)}(q,t)$, although we have at present not found a more explicit/combinatorial expression.
	We have \cite[Remark 2.6]{Mellit_Poincarepols}
	\begin{align}
		Q'_{(1^n)}(q) = q^{\binom{n}{2}} \dfrac{e_n\bigg[\dfrac{X}{1-q}\bigg]}{e_n\bigg[\dfrac{1}{1-q}\bigg]}
		= (q;q)_n\, e_n\bigg[\dfrac{X}{1-q}\bigg].
	\end{align}
	By the dual Cauchy formula \eqref{eq:enCauchy}, $$e_n\bigg[\dfrac{X}{1-q}\bigg] = \sum_{\lambda \vdash n}^{} (\omega P_\lambda(t))\bigg[\dfrac{1}{1-q}\bigg] Q'_\lambda(t)[X],$$
	so for $\lambda \vdash n$,
	\begin{equation}\label{eq:blam1nprinc}
		b_{\lambda,(1^n)}(q,t) = (q;q)_n\, (\omega P_\lambda(t))\bigg[\dfrac{1}{1-q}\bigg].
	\end{equation}
	
	\section{A tableaux formula for hook shapes}\label{sec:hook}
	
	In this section we prove \autoref{thm:main}. Throughout, $\lambda = (n-k,1^k)$ with $n-k>1$ and $k \geq 0$. For $T \in \SSYT(\lambda,\mu)$ we recall the weight $\psi_{T}(q,t) \in \ZZ[q,t]$ from the introduction:
	\begin{align}
		\psi_{T}(x;q,t) = \begin{cases}
			(q^{\ch_T(x)}-t), &\hbox{ if } \coarm(x) > 0, \hbox{ and } T(\Left(x)) < T(x),
			\\
			\qquad q^{\ch_T(x)}, &\hbox{ otherwise}.
		\end{cases}
	\end{align}
	Let \begin{equation}
m = \begin{cases}
	T(1,2)-1, &\hbox{ if } T(1,2) \neq 1,
	\\
	\qquad 1, &\hbox{ otherwise.}
\end{cases}	\end{equation} 
	Define
	\begin{align}\label{eq:psiqtdef}
		\psi_{T}(q,t) = [m]_t \cdot \prod_{x \in \lambda}^{} \psi_{T}(x;q,t).
	\end{align}
	Note that $$ \psi_T(q,0) = q^{\ch(T)} \qquad \hbox{ and } \psi_T(1,t) = \psi_T(t).$$
	\noindent
	We now restate \autoref{thm:main}.
	\begin{theorem}\label{thm:mainlocal}
		Let $\lambda = (n-k,1^k)$ with $n-k>1$ and $k \geq 0$. Then for all $\mu \in \Par$,
		\begin{equation}
			b_{\lambda,\mu}(q,t) = \sum_{T \in \SSYT(\lambda,\mu)}^{} \psi_{T}(q,t).
		\end{equation}
	\end{theorem}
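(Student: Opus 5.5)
The plan is to reduce the hook case to the single-row case, \autoref{th:b_singlerow}, using the vertical-strip Pieri rule, and then to match the resulting alternating expression with the tableau sum. Specializing Macdonald's Pieri rule \cite[Ch. VI, (6.24)]{MacMainBook} at $q=0$ gives, for any parameter $u$,
\begin{align*}
P_\nu(u)e_j=\sum_{\rho-\nu\cong(1^j)}\prod_i{\qbinom{\rho'_i-\rho'_{i+1}}{\rho'_i-\nu'_i}}_u P_\rho(u),
\end{align*}
and in particular
\begin{align*}
P_{(n-k)}(t)e_k=P_{(n-k,1^k)}(t)+P_{(n-k+1,1^{k-1})}(t).
\end{align*}
Writing $\beta_{k,\mu}=\langle P_{(n-k)}(t)e_k,Q'_\mu(q)\rangle$, this gives the recursion
\begin{align*}
b_{(n-k,1^k),\mu}(q,t)=\beta_{k,\mu}-b_{(n-k+1,1^{k-1}),\mu}(q,t).
\end{align*}
The base case $k=0$ is \autoref{th:b_singlerow}, once we check that the single-row tableau formula reproduces $q^{n(\mu)-\binom{\ell}{2}}\prod_{i=1}^{\ell-1}(q^i-t)$. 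In a one-row tableau each new letter $i$ has charge index $i-1$ within the first standard subword, and the remaining letters contribute to $n(\mu)$.

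Next I would compute $\beta_{k,\mu}$ explicitly in the $P(q)$-basis. Expand $P_{(n-k)}(t)=\sum_\nu b_{(n-k),\nu}(q,t)P_\nu(q)$ using \autoref{th:b_singlerow}, then multiply by $e_k$ using the Pieri rule with $u=q$. This gives
\begin{align*}
\beta_{k,\mu}=\sum_{\mu-\nu\cong(1^k)}q^{n(\nu)-\binom{\ell(\nu)}{2}}\prod_{i=1}^{\ell(\nu)-1}(q^i-t)\prod_i{\qbinom{\mu'_i-\mu'_{i+1}}{\mu'_i-\nu'_i}}_q .
\end{align*}

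It then suffices to prove the tableau identity
\begin{align*}
F_k(\mu)+F_{k-1}(\mu)=\beta_{k,\mu},\qquad F_k(\mu)=\sum_{T\in\SSYT((n-k,1^k),\mu)}\psi_T(q,t),
\end{align*}
with $F_{-1}=0$; induction on $k$ then yields the theorem. To set up the comparison, I would parametrize a hook tableau by the set $S$ of entries in the first column below the $1$ in the corner: a $k$-subset of $\{2,\ldots,\ell(\mu)\}$, with the first row consisting of the remaining letters. Removing one copy of each element of $S$ from $\mu$ gives a partition $\nu$ with $\mu-\nu$ a vertical strip only up to reordering within blocks of equal parts. Grouping the sets $S$ according to the resulting $\nu$, the $q$-binomials should arise from \eqref{eq:qcountingsubsets}, since the charge indices of the column entries add up to sums of positions within blocks of equal parts of $\mu$.

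The remaining steps are bookkeeping. The first row's factors $\prod(q^{\ch}-t)$ should reproduce the single-row factor for $\nu$. The factor $[m]_t$, which depends only on $T(1,2)$, should account for the boundary discrepancy. This discrepancy comes from tableaux of shape $(n-k+1,1^{k-1})$ in which the smallest element of $S$ has effectively moved into the first row, and from the difference $q^{\ch}-t$ versus $q^{\ch}$ for the box $(1,2)$.

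The main obstacle is an explicit box-by-box description of charge for hook tableaux. Reading the word right to left, the first row comes before the column, so each letter in the column below $1$ increments its charge index relative to its predecessor exactly when the predecessor lies in the first row or column in a particular order. I would derive a closed formula for $\ch_T(x)$ for hooks in terms of $S$ and $\mu$, and first verify the recursion in the case $\mu=(1^n)$. There the Pieri coefficients are honest $q$-binomials and \eqref{eq:blam1nprinc} gives an independent check. Afterwards I would pass to general $\mu$ by merging equal letters, possibly using \autoref{prop:stability} to reduce to $\mu_1$ small.
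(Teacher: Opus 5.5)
Your architecture is the paper's: the recursion \eqref{eq:brecPieri} from $P_{(n-k)}(t)e_k=P_{(n-k,1^k)}(t)+P_{(n-k+1,1^{k-1})}(t)$, the base case from \autoref{th:b_singlerow}, and a separate treatment of $\mu=(1^n)$.

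\textbf{The gap.} The step you call bookkeeping, the identity $F_k(\mu)+F_{k-1}(\mu)=\beta_{k,\mu}$, is the entire content of the proof. The term-by-term mechanism you sketch for it does not exist.

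First, if $\mu\neq(1^n)$ then $\mu_1\ge 2$. So $T(1,2)=1$ and $[m]_t=1$ for every tableau, and this factor cannot absorb any discrepancy.

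Second, the first-row factors $(q^{\ch}-t)$ do not reproduce the single-row factor of the first-row content $\nu$. Write $C$ for your column set $S$, and set $L=\ell(\mu)-m_1$, $\alpha=\#(C\cap[2,L])$, $\beta=\#(C\cap[L+1,\ell(\mu)])$.
\begin{itemize}
\item A letter $v$ of the first standard subword has index $\#\{u\in[2,v]:u\notin C\}$.
\item Hence the singleton values left in row $1$ get charges $L-\alpha,\ldots,L-\alpha+m_1-\beta-1$ (\autoref{lem:psiqthookprod}).
\item By contrast, $b_{(|\nu|),\nu}(q,t)$ has the factors $(q^c-t)$ for $c\in[1,\ell(\mu)-\beta-1]$, each exactly once.
\item Whenever $\alpha>0$ and $m_1>\beta$, the value $c=L-\alpha$ occurs twice on the tableau side.
\end{itemize}

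A concrete example: take $\mu=(2,2,1)$ and $k=1$.
\begin{itemize}
\item The tableau with first row $1123$ and column entry $2$ has $\psi_T=(q-t)^2$. The Pieri term for $\nu=(2,1,1)$ is $(1+q)(q-t)(q^2-t)$.
\item The tableau with column entry $3$ has $\psi_T=q^2(q-t)$, although no shift occurs there. The Pieri term for $\nu=(2,2)$ is $q(q-t)$.
\item Only the sum of these two discrepancies equals the one-row term $F_0(\mu)=q(q-t)(q^2-t)$. Neither is accounted for individually.
\end{itemize}
So the $z^1$-coefficients agree only after cancellation across different $\nu$; there is no matching indexed by $\nu$.

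\textbf{What is missing} is a genuine $q$-series identity. The paper proceeds as follows.
\begin{itemize}
\item It shows that $\psi_T$ and $\ch(T)$ depend only on $\alpha$, $\beta$ and $\sum_{e\in C}(e-1)$ (\autoref{lem:psiqthookprod}, \autoref{lem:chhook}).
\item This gives $\cL_\mu(z)=q^{n(\mu)}(q^{-1}t;q^{-1})_{L-1}F(L,m_1,z)$, where the weight $(q^{-(L-\alpha)}t;q^{-1})_{m_1-\beta}$ in \eqref{eq:Fdef} couples $\alpha$ and $\beta$.
\item On the Pieri side, \autoref{lem:qbinlastsets} collapses the block-by-block $q$-binomials to $\cR_\mu(z)=q^{n(\mu)}\prod_{j=0}^{L-1}(1+q^{-j}z)\,S(L,m_1,z)$. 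Its $j=0$ factor is the $1+z$ corresponding to $F_k+F_{k-1}$.
\item The remaining identity $(q^{-1}t;q^{-1})_{L-1}F(L,m_1,z)=S(L,m_1,z)\prod_{j=1}^{L-1}(1+q^{-j}z)$ is proved by induction on $m_1$, using the parallel $q$-Pascal recurrences \autoref{lem:Frec} and \autoref{lem:Srec}.
\item For $\mu=(1^n)$, where $[m]_t=[r_1-1]_t$ is nontrivial, the recursion is solved in closed form (\autoref{prop:Cnk}) and compared directly with the tableau sum.
\end{itemize}

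Your idea of passing from $(1^n)$ to general $\mu$ by merging letters or via \autoref{prop:stability} cannot replace this. Stability only shortens $\mu_1$. The two regimes also carry structurally different weights, which is why they must be handled separately.
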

	
	\begin{remark}
		We note that even though most of the notation used in this paper is consistent with Macdonald's book \cite{MacMainBook}, our $\psi_T(q,t)$ is different from the one in their Chapter VI.
	\end{remark}

	\begin{example}\label{eg:hook31}
		Let $\lambda = (3,1)$. The partitions dominated by $\lambda$ are $(3,1)$, $(2,2)$, $(2,1,1)$ and $(1,1,1,1)$. For each $\mu$ we list the tableaux $T \in \SSYT(\lambda,\mu)$ together with their weights $\psi_T(q,t)$ from \autoref{thm:mainlocal}, and the resulting coefficient $b_{\lambda,\mu}(q,t)$.
		
		For $\mu = (3,1)$ there is a single tableau:
		\ytableausetup{smalltableaux}
		\begin{align*}
			\psi_{\ytableaushort{111,2}}(q,t) = 1,
			\qquad
			b_{(3,1),(3,1)}(q,t) = 1.
		\end{align*}
		
		For $\mu = (2,2)$:
		\begin{align*}
			\psi_{\ytableaushort{112,2}}(q,t) = q-t,
			\qquad
			b_{(3,1),(2,2)}(q,t) = q-t.
		\end{align*}
		
		For $\mu = (2,1,1)$:
		\begin{align*}
			\psi_{\ytableaushort{112,3}}(q,t) = (q-t)q,
			\qquad
			\psi_{\ytableaushort{113,2}}(q,t) = q-t,
		\end{align*}
		\begin{align*}
			b_{(3,1),(2,1,1)}(q,t) = (q-t)q + (q-t).
		\end{align*}
		
		For $\mu = (1,1,1,1)$:
		\begin{align*}
			\psi_{\ytableaushort{123,4}}(q,t) &= (q^2-t)(q-t)q^2, \\
			\psi_{\ytableaushort{124,3}}(q,t) &= (q^2-t)(q-t)q, \\
			\psi_{\ytableaushort{134,2}}(q,t) &= (q^2-t)(q-t)(t+1),
		\end{align*}
		\begin{align*}
			b_{(3,1),(1,1,1,1)}(q,t) &= (q-t)(q^2-t)(1+q+q^2+t),
		\end{align*}
		where the last factorization is the formula of \autoref{prop:Cnk}.
	\end{example}
	
	We prove \autoref{thm:mainlocal} in two cases, depending on whether $\mu$ is a single column or not; in both cases the argument rests on the Pieri recursion for hooks  which we derive first.
	
		\subsection{The Pieri recursion}
	
	The proof of \autoref{thm:mainlocal} has two steps. First, we derive a recursion that determines all hook coefficients $b_{(n-k,1^k),\lambda}(q,t)$ from the single-row coefficients, which are known explicitly by \autoref{th:b_singlerow}; this is \eqref{eq:brecPieri} below. Second, and this is the main work, we show that the tableau sum satisfies the same recursion. 
	
	We first recall the vertical strip Pieri rule for Hall-Littlewood polynomials. From the Pieri rule for Macdonald polynomials \cite[Ch. VI, (6.24)]{MacMainBook} for the product $P_\nu(q,t) e_r$, specialized to $q=0$, we have for $\nu \in \Par$ and $m \in \ZZ_{\geq 0}$,
	\begin{equation}\label{eq:Pieri}
		P_\nu(t)e_m =   \sum_{\lambda - \nu \cong (1^m)}^{} \psi'_{\lambda/\nu}(0,t) P_\lambda(t),
	\end{equation}
	where, $\lambda - \nu$ is a vertical strip of size $m$ and
	\begin{equation}\label{eq:psidef}
		\psi'_{\lambda/\nu}(0,t) = \prod_{i}^{} {\qbinom{\lambda'_i - \lambda'_{i+1}}{\lambda'_i - \nu'_i}}_t.
	\end{equation}
	In particular, for $n-k > 1$ and $k \geq 1$,
	\begin{align}\label{eq:PnemPieri}
		P_{(n-k)}(t)e_k = P_{(n-k,1^k)}(t) + P_{(n-k+1,1^{k-1})}(t).
	\end{align}
	On the other hand, expanding $P_{(n-k)}(t)$ first into the $P(q)$-basis and then applying the Pieri rule gives for $n-k>1$,
	\begin{align}\label{eq:PnemtoPq}
		P_{(n-k)}(t)e_k = \sum_{\mu \vdash n-k}^{} b_{(n-k),\mu}(q,t) P_\mu(q)e_k = \sum_{\mu \vdash n-k}^{} b_{(n-k),\mu}(q,t)\sum_{\lambda - \mu \cong (1^k)}^{} \psi'_{\lambda/\mu}(0,q) P_\lambda(q).
	\end{align}
	
	Comparing coefficients of $P_\lambda(q)$ in the two expressions, namely \eqref{eq:PnemPieri} expanded in the $P(q)$-basis and \eqref{eq:PnemtoPq}, we obtain for every $\lambda \vdash n$ the recursion
	\begin{align}\label{eq:brecPieri}
		b_{(n-k+1,1^{k-1}),\lambda}(q,t) + b_{(n-k,1^k),\lambda}(q,t) = \sum_{\substack{\mu \vdash n-k \\ \lambda - \mu \cong (1^k)}}^{} b_{(n-k),\mu}(q,t)  \prod_{i \geq 1}^{} {\qbinom{\lambda'_i - \lambda'_{i+1}}{\lambda'_i - \mu'_i}}_q.
	\end{align}
	
	\subsection{The case when \texorpdfstring{$\mu$}{mu} is not a single column}\label{sec:hooknotcol}
	
	Fix $n \geq 1$, $\mu \vdash n$ with $\mu \neq (1^n)$, and set $\ell = \ell(\mu)$. For $i \in \ZZ_{\geq 1}$, let $m_i = \#\{j \,|\, \mu_j =i \}$, the multiplicity of $i$ in $\mu$. We abbreviate
	\begin{align}
		L := \ell - m_1
	\end{align}
	(the number of parts of $\mu$ that are at least $2$), and use this notation throughout the subsection.
	
	Our goal is to prove that 
	\begin{equation}
		\widehat{b}_{(n-k,1^k),\mu}(q,t) := \sum_{T \in \SSYT((n-k,1^k),\mu)}^{} \psi_{T}(q,t)
	\end{equation} satisfies the recursion in \eqref{eq:brecPieri}. To rewrite the recursion in generating function form, define
	\begin{align}
		\mathcal{R}_\mu(z) := \sum_{\nu: \mu-\nu \cong (1^*)}^{} \psi'_{\mu/\nu}(0,q)\, b_{(|\nu|),\nu}(q,t)\, z^{|\mu-\nu|} \in \ZZ[q,t,z],
	\end{align} where the sum is over all $\nu$ such that $\mu/\nu$ is a vertical strip,
	and
	\begin{align}
		\mathcal{L}_\mu(z) := \sum_{k = 0}^{n-1} \sum_{T \in \SSYT((n-k,1^k),\mu)}^{} \psi_{T}(q,t)z^k \in \ZZ[q,t,z].
	\end{align}
	
	\noindent Then proving \eqref{eq:brecPieri} for $\widehat{b}_{(n-k,1^k),\mu}(q,t)$ is equivalent to proving that
	\begin{align}
		\mathcal{R}_\mu(z) = (1+z) \mathcal{L}_\mu(z).
	\end{align}
	
	The rest of the subsection is a computation of the two sides of this identity, and it is organized as follows. Both sides turn out to have a common factor $q^{n(\mu)}(q^{-1}t;q^{-1})_{\ell-m_1-1}$; after removing it, what remains is an identity between two explicit $q$-binomial generating functions $F$ and $S$ \eqref{eq:FSprod}, which we prove by induction on $m_1$ using matching recurrences (\autoref{lem:Frec} and \autoref{lem:Srec}).
	
	Since $\mu \neq (1^n)$ we have $\mu_1 > 1$, and hence $m = 1$ for every $T \in \SSYT((n-k,1^k),\mu)$; consequently
	\begin{align}
		\psi_{T}(q,t) = q^{\ch(T)} \prod_{x \in D(T)}^{} (1-q^{-\ch_T(x)}t),
	\end{align}
	where 
	\begin{equation}
		D(T) = \{(1,c) \in (n-k,1^k)\,|\, c>1 \hbox{ and } T(1,c-1)<T(1,c)\}
	\end{equation} is the set of boxes in the first row that contribute $(q^{\ch_T(x)}-t)$ in the original formulation.
	
	Throughout the section let
	\begin{equation}
		C = \{T(2,1),\ldots, T(k+1,1)\},
	\end{equation} the set of entries in the first column of $T$ excluding the corner $(1,1)$ box. For $k \in \ZZ$, we denote the entries of $C$ smaller or equal to $k$ by $C_{\leq k} = C \cap \ZZ_{\leq k}$, and similarly for elements of $C$ greater equal $k$ write $C_{\geq k} = C \cap \ZZ_{\geq k}$ etc.
	
	The first step is to make the weights $\psi_T(q,t)$ explicit. In the next two lemmas we rewrite $\psi_T(q,t)$ as a product of $q$-pochhammer symbols, which goes towards simplifying $\cL_\mu(z)$: the boxes where the weight picks up a $(q^{\ch_T(x)}-t)$-factor, and the charges of those boxes, are determined by the column $C$ alone.
	\begin{lemma}\label{lem:psiqthookprod}
		With the notations from above, set
		$\alpha = \#C_{\leq \ell -m_1}$ and $\beta = \# C_{\geq \ell -m_1+1}$.
		Then
		\begin{align}
			\psi_{T}(q,t) = q^{\ch(T)} (q^{-1}t;q^{-1})_{\ell-m_1-1} (q^{-(\ell-m_1-\alpha)}t;q^{-1})_{m_1 - \beta}
		\end{align}
	\end{lemma}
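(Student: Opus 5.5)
The plan is to first pin down which boxes lie in $D(T)$ and then compute the charge indices of exactly those boxes; the other boxes only contribute $q^{\ch_T(x)}$. Since $\mu\neq(1^n)$ we have $m=1$, so, as noted just before the lemma,
\[
\psi_T(q,t)=q^{\ch(T)}\prod_{x\in D(T)}(1-q^{-\ch_T(x)}t).
\]
The lemma is therefore equivalent to the following statement about multisets:
\[
\{\ch_T(x): x\in D(T)\} = \{1,2,\ldots,L-1\}\ \sqcup\ \{L-\alpha,\,L-\alpha+1,\ldots,L-\alpha+m_1-\beta-1\},
\]
where $L=\ell-m_1$.

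First I would describe the shape of $T$. The first row is weakly increasing and starts with all $\mu_1$ ones. The first column below $(1,1)$ is the strictly increasing set $C$, with $1\notin C$. A value $i\le L$ has $\mu_i\ge 2$, and since it occurs at most once in the column, it must occur in the first row. A value $j\in\{L+1,\ldots,\ell\}$ occurs exactly once, either in $C$ or in the first row. Hence the distinct values in the first row are $\{1,\ldots,L\}\cup(\{L+1,\ldots,\ell\}\setminus C)$. The set $D(T)$ consists of the leftmost box of each such value other than $1$. This gives $|D(T)|=(L-1)+(m_1-\beta)$, which matches the number of factors in the claimed product. It also shows that $D(T)$ splits into a part $D_{\le L}$ of size $L-1$, whose row entries are fixed, and a part $D_{>L}$ of size $m_1-\beta$, depending on $C$.

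Next I would compute charge indices from the reverse reading word. This word is the first row read right to left, which is weakly decreasing with equal letters in contiguous blocks, followed by $C$ read top to bottom, which is increasing. I would track the standard subwords $w^1,w^2,\ldots$ letter by letter. When building a subword we go from $i$ to $i+1$. If $i+1\in C$ and lies to the right of the currently chosen $i$, the index does not increase. Otherwise we must wrap around to a row copy of $i+1$, which lies to the left of everything in the column part, and the index goes up by one.

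For $D_{>L}$: each singleton value $j>L$ not in $C$ belongs only to $w^1$. I would show its index in $w^1$ equals $L-\alpha$ plus the number of singleton values between $L+1$ and $j$ that are also row values. The point is that every step into a row copy increments the index, while each column value $c\le L$ saves an increment; this accounts for the shift by $\alpha$. Listed over all $j\in\{L+1,\ldots,\ell\}\setminus C$, these indices are the consecutive integers $L-\alpha,\ldots,L-\alpha+m_1-\beta-1$. I would verify this by induction along $w^1$.

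For $D_{\le L}$: the boxes are leftmost copies of repeated letters, and these sit at the end of their blocks in reading order. I would prove by induction on $i$ that the leftmost copies of $2,\ldots,L$ collectively receive the indices $1,\ldots,L-1$, with each appearing once, though not necessarily with value $i$ getting index $i-1$. The intended argument is a bijection: the letter $i$ is missing from some later subwords, and the presence of $C_{\le L}$ only reorders which subword picks up the last copy, without changing the multiset of indices. Equivalently, one can compare with the column-free tableau by a local exchange argument.

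The main obstacle is this bookkeeping for repeated letters $\le L$. It requires understanding exactly which subword $w^r$ picks the leftmost copy of each $i$, and showing that the resulting indices form $\{1,\ldots,L-1\}$ independently of $C$. I would first try proving an invariant: after processing the letters $1,\ldots,i$, the last copies of $2,\ldots,i$ carry indices forming $\{1,\ldots,i-1\}$. I would then check how adding the block of $(i+1)$'s, and possibly $i+1\in C$, preserves this invariant. As a sanity check, the tableaux in \autoref{eg:hook31} should be reproduced.
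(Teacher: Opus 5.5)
\textbf{Genuine gap: the charges of the boxes of $D(T)$ with entries $i\le L$.}

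Your first steps are sound. The reduction to the multiset $\{\ch_T(x): x\in D(T)\}$, the description of $D(T)$ and its size, and the computation of the charges on $D_{>L}$ via $w^1$ are all correct; in the last one, read ``between $L+1$ and $j$'' as excluding $j$. The gap is the $D_{\le L}$ half, which you flag as the main obstacle and leave unproved. In its place you offer an unspecified bijection, a ``local exchange'', and an inductive invariant. You also hedge that value $i$ need not receive index $i-1$. That hedge is wrong: it always does. The mechanism you sketch, in which the column ``reorders which subword picks up the last copy'', does not occur. In fact the box of $D(T)$ with entry $i\le L$ always lies in $w^{\mu_i}$, whether or not $i\in C$.

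\textbf{The missing observation.} The first subword $w^1$ absorbs the entire column. In the reverse reading word, every row letter precedes every column letter, and $C$ is read in increasing order. So at each step $i\to i+1$ of building $w^1$, whenever $i+1\in C$ the first $i+1$ to the right of the chosen $i$ is its column copy. Otherwise $w^1$ wraps around and takes the first letter of the $(i+1)$-block of the reversed first row.

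Now take a repeated value $i\le L$. Its box in $D(T)$ is the last letter of the $i$-block in $w$, and this box is never in $w^1$, for two reasons.
\begin{itemize}
\item If $i\in C$, then $w^1$ uses the column copy of $i$.
\item If $i\notin C$, the block has $\mu_i\ge 2$ letters, and $w^1$ takes only the first of them.
\end{itemize}
After deleting $w^1$, the remaining word consists only of first-row letters and is weakly decreasing, with partition content $(\mu_1-1,\mu_2-1,\dots)$. In each of $w^2,w^3,\dots$, every step therefore wraps around, so letter $v$ receives index exactly $v-1$.

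Hence $\ch_T(x)=i-1$ for the box $x\in D(T)$ with entry $i\le L$. These boxes contribute exactly $(q^{-1}t;q^{-1})_{L-1}$, independently of $C$. This is the paper's argument, and combined with your $D_{>L}$ computation it completes the proof.
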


	\begin{proof}
		Suppose $(1,c) \in D(T)$. If $T(1,c) = i$  and $i$ appears more than once in $T$, i.e., $\mu_i >1$, then the $i$ coming from $T(1,c)$ does not appear in the first standard subword, hence $\ch_T((1,c)) = i-1$ in that case. Thus for such boxes, the charge contributions form a consecutive sequence $[1,\ell-m_1-1]$.
		
		Suppose $(1,c) \in D(T)$, $T(1,c) = i$ and $\mu_i = 1$.  Then 
		$$\ch_T((1,c)) = \#\{v \in [2,i] : v \text{ appears in row } 1\} = (i-1) - \#\big([2,i] \cap C\big).$$
		Since $i \in \{\ell-m_1+1,\ldots,\ell\}$, the minimum charge of such boxes is $\ell - m_1 - \alpha$. The charge contributions for these types of boxes again form a consecutive sequence and there are $m_1-\beta$ such boxes.
		
		Thus
		\begin{align*}
			\psi_{T}(q,t) = q^{\ch(T)} (q^{-1}t;q^{-1})_{\ell-m_1-1} (q^{-(\ell-m_1-\alpha)}t;q^{-1})_{m_1 - \beta}. 
		\end{align*}
	\end{proof}
	
	We will also need the total charge of a hook tableau, which again depends only on the column:
	\begin{lemma}\label{lem:chhook}
		With the notations from above,
		\begin{align*}
			\ch(T) = n(\mu) - \ell k + \sum_{e \in C}^{} (e-1).
		\end{align*}
	\end{lemma}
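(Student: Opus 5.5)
The plan is to compute the charge directly from the reverse reading word, exploiting the fact that a hook tableau has a very rigid reading word. Since $\mu \neq (1^n)$ in this subsection, $T(1,1)=1$ and every entry of $C$ is at least $2$. The entries of $C$ are distinct because the column is strictly increasing. The reverse reading word then has the form
\begin{align*}
w = r_{n-k}\, r_{n-k-1} \cdots r_1 \; c_1 c_2 \cdots c_k ,
\end{align*}
where $r_{n-k}\ge \cdots \ge r_1=1$ is the first row read right to left (weakly decreasing) and $c_1<\cdots<c_k$ are the elements of $C$ (increasing). The case $k=0$ is just the single row and is covered by the same computation with $C=\emptyset$.

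The first step is to analyse the first standard subword $w^1$, which uses the letters $1,\ldots,\ell$. The chosen $1$ lies in the row part. Inductively, suppose the chosen copy of $i$ has been located. If $i+1\in C$, the column copy of $i+1$ lies to the right of every row letter and of every column letter smaller than it, so it is selected and the index does not increase. If $i+1\notin C$, every copy of $i+1$ lies in the row part. Because the row part is weakly decreasing, all these copies sit to the left of the chosen $i$, whether that $i$ is in the row or in the column. So we circle back, and the index increases by one. This gives
\begin{align*}
I(i)=\#\{v\in[2,i] : v\notin C\}=(i-1)-\#([2,i]\cap C),
\end{align*}
and summing over $i$ yields
\begin{align*}
\ch(w^1)=\binom{\ell}{2}-\sum_{e\in C}(\ell-e+1).
\end{align*}
This matches the per-box charges already used in \autoref{lem:psiqthookprod}.

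The second step concerns the remaining subwords. Every column letter was consumed by $w^1$, so each subsequent subword $w^j$ ($j\ge 2$) consists of the letters $1,\ldots,\mu'_j$ taken only from the decreasing row part. There each $i+1$ lies to the left of $i$, so $I(i)=i-1$ and $\ch(w^j)=\binom{\mu'_j}{2}$.

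Adding these up and using $n(\mu)=\sum_j\binom{\mu'_j}{2}$ with $\mu'_1=\ell$ gives
\begin{align*}
\ch(T)=n(\mu)-\sum_{e\in C}(\ell-e+1)=n(\mu)-\ell k+\sum_{e\in C}(e-1),
\end{align*}
since $|C|=k$.

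The only delicate point is the case analysis in the first step, specifically when the chosen $i$ is a column letter and $i+1\notin C$. There one must check that the "first occurrence" rule really lands in the row part and that no row copy of $i+1$ lies to the right of the chosen $i$. The weak decrease of the row part, together with the fact that the column part comes after it in $w$, settles this.
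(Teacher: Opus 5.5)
Your proposal is correct and is essentially the paper's argument: the first standard subword absorbs every entry of $C$ and has charge $\binom{\ell}{2}-\sum_{e\in C}(\ell-e+1)$. The remaining subwords lie in the weakly decreasing row part and contribute $\sum_{j\geq 2}\binom{\mu'_j}{2}$, so together they give $n(\mu)-\ell k+\sum_{e\in C}(e-1)$. You also spell out the left/right case analysis that the paper's short proof leaves implicit; note that $T(1,1)=1$ holds for any partition content, not specifically because $\mu\neq(1^n)$.
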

	
	\begin{proof}
		Note that all the entries of $C$ get picked up in the first standard subword; the rest of the standard subwords contribute $n(\widetilde{\mu})$ where $\widetilde{\mu} = (\mu_1-1,\mu_2-1,\ldots)$. The contribution from the first standard subword gives $\binom{\mu_1'}{2} - \sum_{e \in C}^{} (\ell-e+1)$. Since $\binom{\mu_1'}{2} + n(\widetilde{\mu}) = n(\mu)$, we get the result.
	\end{proof}
	
	With the weights in hand, we compute $\cL_\mu(z)$: grouping tableaux by how many singleton values sit in the column turns it into a double $q$-binomial sum, and after pulling out the common factor what remains is the following generating function. Define, for $r \in \ZZ_{\geq 1}$ and $s \in \ZZ_{\geq 0}$,
	\begin{align}\label{eq:Fdef}
		F(r,s,z) := \sum_{\substack{s \geq \beta \geq 0 \\ r-1 \geq \alpha \geq 0}}^{} (q^{-(r-\alpha)}t;q^{-1})_{s - \beta}\, q^{\binom{\alpha+1}{2} - \alpha(r+s) + \binom{\beta}{2} - \beta s}{\qbinom{r-1}{\alpha}}_q {\qbinom{s}{\beta}}_q z^{\alpha+\beta}.
	\end{align}
	
	\begin{lemma}\label{lem:LmuF}
		With the above notations,		
		\begin{align*}
			\cL_\mu(z) =  q^{n(\mu)}(q^{-1}t;q^{-1})_{\ell-m_1-1}
			\cdot F(\ell - m_1,m_1,z).
		\end{align*}
	\end{lemma}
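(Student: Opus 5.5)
The plan is to parametrize hook tableaux of content $\mu$ by their first column. Then we combine \autoref{lem:psiqthookprod} and \autoref{lem:chhook} and evaluate the resulting sums over subsets with \eqref{eq:qcountingsubsets}.

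\textbf{Parametrization.} For $\lambda=(n-k,1^k)$, a tableau $T\in\SSYT(\lambda,\mu)$ is determined by the set $C$ of entries in the first column below $(1,1)$. Here $C$ is a $k$-element subset of $\{2,\ldots,\ell\}$: the entry $(1,1)$ must be $1$, entries of a column are strictly increasing, and the first row is then forced to be the remaining multiset in increasing order. Conversely, every $k$-subset $C\subseteq[2,\ell]$ gives a valid tableau. As $k$ ranges over $0,\ldots,n-1$, the generating function $\cL_\mu(z)$ therefore becomes a sum over all subsets $C\subseteq[2,\ell]$, weighted by $z^{\#C}$.

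\textbf{Splitting $C$.} Split $C=C_{\le L}\sqcup C_{\ge L+1}$, where $L=\ell-m_1$. The first piece is a subset of $[2,L]$, which has $L-1=r-1$ elements, of size $\alpha$. The second is a subset of $[L+1,\ell]$, which has $m_1=s$ elements, of size $\beta$; so $k=\alpha+\beta$.

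\textbf{Summand.} By \autoref{lem:psiqthookprod} and \autoref{lem:chhook}, the weight is
\[
q^{n(\mu)}(q^{-1}t;q^{-1})_{L-1}\,(q^{-(L-\alpha)}t;q^{-1})_{m_1-\beta}\; q^{-\ell(\alpha+\beta)+\sum_{e\in C}(e-1)} .
\]
The only dependence on $C$ beyond $\alpha,\beta$ is through $\sum_{e\in C}(e-1)$. This sum splits as a sum over $C_{\le L}$ plus a sum over $C_{\ge L+1}$.

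\textbf{Evaluating the subset sums.} Apply \eqref{eq:qcountingsubsets} to each piece.

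For the first piece, write $e-1=j$ with $j\in[1,L-1]$. This gives
\[
\sum q^{\sum(e-1)} = q^{\binom{\alpha+1}{2}}{\qbinom{L-1}{\alpha}}_q .
\]

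For the second piece, write $e-1=(L-1)+j$ with $j\in[1,m_1]$. This gives
\[
q^{\beta(L-1)}\,q^{\binom{\beta+1}{2}}{\qbinom{m_1}{\beta}}_q .
\]

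\textbf{Collecting exponents.} Combine these with $-\ell(\alpha+\beta)=-(L+m_1)(\alpha+\beta)$. The $\alpha$-part gives
\[
\binom{\alpha+1}{2}-\alpha(r+s),
\]
which matches \eqref{eq:Fdef}. The $\beta$-part gives
\[
\beta(L-1)+\binom{\beta+1}{2}-\beta L-\beta s=\binom{\beta}{2}-\beta s,
\]
using $\binom{\beta+1}{2}-\beta=\binom{\beta}{2}$, which also matches. Summing over $0\le\alpha\le r-1$ and $0\le\beta\le s$ then yields exactly $q^{n(\mu)}(q^{-1}t;q^{-1})_{L-1}F(L,m_1,z)$.

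\textbf{Main obstacle.} The delicate step is the bookkeeping of exponents. One must check that \autoref{lem:chhook} is applied with the correct $k=\alpha+\beta$, and that the Pochhammer factor of \autoref{lem:psiqthookprod} depends only on $(\alpha,\beta)$ and not on $C$ itself, so that it can be pulled out of the subset sum. Edge cases must also be checked: $L=1$ (so $r-1=0$) and the hypothesis $\mu\neq(1^n)$ (so that $L\ge1$ and $m=1$). Finally, we need the first row entries to be weakly increasing, which holds automatically.
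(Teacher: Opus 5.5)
Your proposal is correct and follows essentially the same route as the paper. You parametrize tableaux by the column set $C$, split it at $L=\ell-m_1$, pull out the common factor from \autoref{lem:psiqthookprod}, apply \autoref{lem:chhook} and \eqref{eq:qcountingsubsets} to the two pieces, and match exponents. Your separate $\alpha$- and $\beta$-bookkeeping is just the paper's single identity $-\ell k+(\ell-m_1-1)\beta+\binom{\beta+1}{2}=-\alpha\ell-m_1\beta+\binom{\beta}{2}$ split into two parts, and your explicit justification of the bijection $T\leftrightarrow C\subseteq[2,\ell]$ fills in a step the paper states only briefly.
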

	
	\begin{proof}
		
		For a set $S$ of natural numbers, let $$\Sigma(S) = \sum_{e \in S}^{} (e-1).$$ 
		
		By \autoref{lem:psiqthookprod}, for every $T \in \SSYT((n-k,1^k),\mu)$ the weight $\psi_{T}(q,t)$ is divisible by $(q^{-1}t;q^{-1})_{\ell -m_1-1}$.
		Also $\ch(T) = n(\mu) -\ell k + \sum_{e \in C}^{} (e-1)$ by \autoref{lem:chhook}, where $C$ denotes the column of $T$; hence
		\begin{align*}
			&\widehat{b}_{(n-k,1^k),\mu}(q,t) := \sum_{\substack{T \in \SSYT((n-k,1^k),\mu)}}^{} \psi_{T}(q,t)
			\\
			&= q^{n(\mu) - \ell k} (q^{-1}t;q^{-1})_{\ell-m_1-1} \sum_{T \in \SSYT((n-k,1^k),\mu)}^{} q^{\Sigma(C)} (q^{-(\ell-m_1-\alpha)}t;q^{-1})_{m_1 - \beta},
		\end{align*} where $\beta$ is the number of values in the column of $T$ which do not appear in the row of $T$. Now we can break the sum over $\SSYT((n-k,1^k),\mu)$ by grouping together terms that give the same $\beta = \# C_{\geq \ell - m_1 +1}$.
		Note that $\Sigma(C) = \Sigma(C_{\leq \ell-m_1}) + \Sigma(C_{> \ell-m_1})$ and $\# C_{\leq \ell-m_1} + \# C_{> \ell-m_1} = k$. For a fixed $\beta$, since we can choose a pair consisting of a subset of $\{2,\ldots,\ell-m_1\}$ and a subset of $\{\ell-m_1+1,\ldots,\ell\}$ independently, and this gives a unique $C$ and a corresponding unique $T$, we have, by \eqref{eq:qcountingsubsets},
		\begin{align*}
			\sum_{S \in \binom{[2,\ell-m_1]}{\alpha}} q^{\Sigma(S)} = q^{\binom{\alpha+1}{2}} {\qbinom{\ell-m_1-1}{\alpha}}_q,
		\end{align*}
		and
		\begin{align*}
			\sum_{S \in \binom{[\ell-m_1+1,\ell]}{\beta}} q^{\Sigma(S)} = q^{(\ell-m_1-1)\beta} \cdot q^{\binom{\beta+1}{2}} {\qbinom{m_1}{\beta}}_q.
		\end{align*}
		Therefore,
		\begin{align*}
			&\sum_{\substack{T \in \SSYT((n-k,1^k),\mu) \\ \# C_{\leq \ell -m_1} = \alpha \\ \# C_{> \ell - m_1} = \beta}}^{} q^{\Sigma(C_{\leq \ell-m_1})} q^{\Sigma(C_{> \ell-m_1})}
			= q^{\binom{\alpha+1}{2}}{\qbinom{\ell-m_1-1}{\alpha}}_q q^{(\ell-m_1-1)\beta} q^{\binom{\beta+1}{2}} {\qbinom{m_1}{\beta}}_q.
		\end{align*}

		Thus, summing over all $\beta$, and using $\alpha+\beta = k$ and the identity
		\begin{align*}
			-\ell k + (\ell - m_1 -1)\beta + \binom{\beta+1}{2} = -\alpha \ell - m_1 \beta + \binom{\beta}{2},
		\end{align*}
		we get the statement.
	\end{proof}
	
	For the right hand side we need the dual bookkeeping: removing a vertical strip from $\mu$ removes a final segment from each column of $\mu$, and the following lemma records the resulting $q$-binomial generating function.
	
	\begin{lemma}\label{lem:qbinlastsets}
		Let $a \leq b$, $a,b \in \ZZ_{\geq 0}$ and $m = b-a$. Let $D_r = [b-r+1,b] \subseteq [a+1,b]$ for $r \in [0,m]$. Then
		\begin{align*}
			\sum_{r=0}^{m} q^{-\sum_{i \in D_r}^{}(i-1)}{} {\qbinom{m}{r}}_q z^r = \prod_{j=a}^{b-1} (1+q^{-j}z).
		\end{align*}
	\end{lemma}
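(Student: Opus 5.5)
The plan is to reduce the identity to the $q$-binomial theorem after a change of variables that turns the sum over final segments into a sum over arbitrary $r$-subsets.

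First simplify the exponent. Since $D_r = [b-r+1,b]$,
\begin{align*}
\sum_{i \in D_r}(i-1) = \sum_{j=b-r}^{b-1} j = rb - \binom{r+1}{2}.
\end{align*}
So the left hand side equals $\sum_{r=0}^{m} q^{\binom{r+1}{2} - rb}\, {\qbinom{m}{r}}_q z^r$.

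Next expand the right hand side, the product over $j \in [a,b-1]$. Choosing a subset $S \subseteq [a,b-1]$ of size $r$ contributes $q^{-\sum_{j \in S} j} z^r$. Substitute $j = b - s$ with $s \in [1,m]$. The sum over $r$-subsets of $[a,b-1]$ then becomes $q^{-rb}\sum_{S' \in \binom{[m]}{r}} q^{\sum_{s\in S'} s}$. By \eqref{eq:qcountingsubsets} this equals $q^{-rb} q^{\binom{r+1}{2}}{\qbinom{m}{r}}_q$, which matches the left hand side coefficient by coefficient in $z$.

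There is no real obstacle. The only care needed is the reindexing $j \mapsto b-j$, so that the subset-counting formula applies with positive exponents, and checking the edge case $m=0$, where both sides equal $1$.
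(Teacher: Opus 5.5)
Your proof is correct and is essentially the paper's argument. The paper also reduces the left side via $\sum_{i\in D_r}(i-1)=br-\binom{r+1}{2}$ and writes the right side as $\sum_r e_r(q^{-a},\dots,q^{-(b-1)})z^r$, then pulls out $q^{-(b-1)r}$ and applies $e_r(1,q,\dots,q^{m-1})=q^{\binom{r}{2}}{\qbinom{m}{r}}_q$; your reindexing $j\mapsto b-j$ followed by \eqref{eq:qcountingsubsets} is the same computation with a different normalization.
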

	\begin{proof}
		By definition of the elementary symmetric functions $e_r$, the right hand side is $\sum_r e_r(q^{-a},\dots,q^{-(b-1)})z^r$. Using \cite[Ch. I.2 Ex. 3]{MacMainBook} \begin{align}
			e_r(1,q,\dots,q^{m-1})=q^{\binom{r}{2}}\qbinom{m}{r}_q,
		\end{align} we get 
		\begin{align}\label{eq:qbininv}
			e_r(q^{-a},\dots,q^{-(b-1)}) = q^{-(b-1)r}e_r(1,q,\ldots,q^{m-1}) =  q^{\binom{r+1}{2}-br}\qbinom{m}{r}_q.
		\end{align}
		Using 
		\begin{align*}
			\sum_{i\in D_r}(i-1)=r(b-r)+\binom{r}{2} = br - \binom{r+1}{2}
		\end{align*} we get the result.

%		%%%%%%%%%%%%%%%%%%%%%%%%%%%%%%%%%%%%%
%		Using $\binom{r}{2} - r^2 = -\binom{r+1}{2},$ we get
%		\begin{align*}
%			\sum_{i \in D_r}^{} (i-1) &= (b-r+0) + \cdots + (b-r+r-1) = r(b-r) + \binom{r}{2}
%			\\
%			&= r(b-m) + r m - \binom{r+1}{2}.
%		\end{align*}
%		Then using \eqref{eq:qbinrev} and the $q$-binomial theorem \eqref{eq:qbin}, \eqref{eq:qbininv},
%		\begin{align*}
%			&\sum_{r = 0}^{m} q^{-\sum_{i \in D_r}^{} (i-1)} {\qbinom{m}{r}}_q z^{r} = \sum_{r = 0}^{m} q^{-r(b-m) - r m + \binom{r+1}{2}}{\qbinom{m}{r}}_q z^{r}
%			\\
%			&= \sum_{r = 0}^{m} q^{-r(b-m) - r m + \binom{r+1}{2}+r(m-r)}{\qbinom{m}{r}}_{q^{-1}} z^{r}
%			= \sum_{r = 0}^{m} q^{-r(b-m)  + \binom{r+1}{2} -r^2}{\qbinom{m}{r}}_{q^{-1}} z^{r}
%			\\
%			&= \sum_{r = 0}^{m} q^{ - \binom{r}{2}}{\qbinom{m}{r}}_{q^{-1}} (zq^{-(b-m)})^{r} = \prod_{j=a}^{b-1} (1+q^{-j}z).
%		\end{align*}
%		
	\end{proof}
	
	We now compute the right hand side $\cR_\mu(z)$; using the single-row formula \eqref{eq:b_singlerow_intro} and the vertical strip bookkeeping above, it factors into a product of linear terms times a reduced sum.
	
	Define
	\begin{align}\label{eq:Sdef}
		S(n,m,z) := \sum_{r=0}^{m} {\qbinom{m}{r}}_q (q^{-1}t;q^{-1})_{n + m - r - 1}\, q^{\binom{r+1}{2} - r(n+m)} z^r.
	\end{align}
	
	\begin{lemma}\label{lem:RmuS}
		With the above notations,
		$$ \cR_\mu(z) = q^{n(\mu)}\prod_{j=0}^{L-1}(1+zq^{-j})\cdot S(L,m_1,z),$$ where $L = \ell -m_1$.
	\end{lemma}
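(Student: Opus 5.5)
We expand $\cR_\mu(z)$ directly from its definition, summing over the partitions $\nu$ such that $\mu-\nu$ is a vertical strip. Such a $\nu$ is obtained by deleting a final segment from each column of $\mu$. Equivalently, for each $i\geq 1$ we delete $r_i$ boxes from the bottom of the block of rows of $\mu$ of length exactly $i$; those rows then have length $i-1$. Here $0\le r_i\le m_i$.

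The Pieri factor is then
\[
\psi'_{\mu/\nu}(0,q)=\prod_i {\qbinom{m_i}{r_i}}_q ,
\]
since $\mu'_i-\mu'_{i+1}=m_i$ and $\mu'_i-\nu'_i=r_i$. The size $|\mu-\nu|=\sum_i r_i$ is the exponent of $z$. Deleting rows of length $1$ lowers the length, so $\ell(\nu)=\ell-r_1$. All other deletions only shorten rows of length $\geq 2$ and leave $\ell(\nu)=\ell$ unchanged.

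Next we insert \autoref{th:b_singlerow}, written as
\[
b_{(|\nu|),\nu}(q,t)=q^{n(\nu)}\,(q^{-1}t;q^{-1})_{\ell(\nu)-1}.
\]
This form comes from $q^{-\binom{\ell}{2}}\prod_{i=1}^{\ell-1}(q^i-t)=\prod_{i=1}^{\ell-1}(1-q^{-i}t)$. We then compute $n(\nu)$. Within the block of rows of length $i$ (rows $a+1,\dots,b$ with $b-a=m_i$), removing the bottom $r_i$ boxes of the column lowers $n$ by $\sum_{j\in D_{r_i}}(j-1)$, with $D_r$ as in \autoref{lem:qbinlastsets}. These contributions are independent across blocks because $n(\nu)=\sum_j (j-1)\nu_j$ is linear in the row lengths.

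Hence $\cR_\mu(z)$ factors as $q^{n(\mu)}$ times a product over $i\ge2$ of
\[
\sum_{r_i}q^{-\sum_{j\in D_{r_i}}(j-1)}{\qbinom{m_i}{r_i}}_q z^{r_i},
\]
times a sum over $r_1$. This is the key factorization: the factor $(q^{-1}t;q^{-1})_{\ell(\nu)-1}$ depends only on $r_1$, so it belongs entirely to the $r_1$ sum. By \autoref{lem:qbinlastsets}, the factor for block $i\ge 2$ equals $\prod_{j=a_i}^{b_i-1}(1+q^{-j}z)$. The blocks for $i\ge2$ occupy rows $1,\dots,L$ consecutively, so their product telescopes to $\prod_{j=0}^{L-1}(1+zq^{-j})$.

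For the remaining $i=1$ block (rows $L+1,\dots,\ell$), we use the computation in the proof of \autoref{lem:qbinlastsets}:
\[
\sum_{j\in D_r}(j-1)=\ell r-\binom{r+1}{2}.
\]
With $\ell=L+m_1$, the $r_1=r$ term becomes
\[
{\qbinom{m_1}{r}}_q\,(q^{-1}t;q^{-1})_{L+m_1-r-1}\,q^{\binom{r+1}{2}-r(L+m_1)}\,z^r .
\]
Summing over $r$ gives exactly $S(L,m_1,z)$.

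The main thing to check carefully is this bookkeeping. One point is that $n(\nu)$ splits additively across blocks even though $\nu$ need not keep the blocks in their original order; it does, since row lengths only decrease by one and the order is preserved. The other is that the exponent offsets in each block match the indices $a,b$ in \autoref{lem:qbinlastsets}. Beyond that, the argument needs only the hypothesis $\mu\neq(1^n)$, which guarantees $L\ge1$ so that the row indexing above is consistent.
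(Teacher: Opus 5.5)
Your proposal is correct and follows the paper's proof. You parametrize the vertical strips by the numbers $r_i$ of bottom rows removed from each block of equal parts, factor the sum via $n(\mu)-n(\nu)=\sum_{j\in D}(j-1)$, evaluate each block with $i\ge 2$ by \autoref{lem:qbinlastsets} (the ranges $[\mu'_{i+1},\mu'_i-1]$ concatenate to $[0,L-1]$), and identify the $i=1$ sum, which alone carries $(q^{-1}t;q^{-1})_{\ell-r_1-1}$, with $S(L,m_1,z)$, differing only cosmetically in writing the last-block exponent directly as $\ell r-\binom{r+1}{2}$ rather than $r(\ell-r)+\binom{r}{2}$.
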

	
	\begin{proof}
		Write $\nu = \mu - \sum_{i \in D}^{} \varepsilon_i$, where $D$ is the set of parts removed from $\mu$ and $\varepsilon_i$ are the standard basis vectors of $\ZZ^{\ell(\mu)}$.
		Let $D_i = \{j \in D \mid \mu_j = i\}$.
		Let $r = \# D_1$.
		Note that $D_1 = \{\ell-r+1,\ldots,\ell\}$. The contribution from $D_1$ in $\psi'$ is ${\qbinom{m_1}{r}}_q$.
		Note that $\ell(\nu) = \ell - r$.
		Then, by \eqref{eq:b_singlerow_intro},
		\begin{align*}
			b_{(|\nu|),\nu}(q,t) = q^{n(\nu)} (q^{-1}t;q^{-1})_{\ell - r - 1},
		\end{align*}
		and
		\begin{align*}
			n(\mu) - n(\nu) &= \sum_{i \in D}^{} (i-1) = \sum_{i \in D \setminus D_1}^{} (i-1) + r(\ell-r) + \binom{r}{2}.
		\end{align*}
		
		Then
		\begin{align*}
			\cR_\mu(z) &= q^{n(\mu)} \bigg(\sum_{r=0}^{m_1} {\qbinom{m_1}{r}}_q (q^{-1}t;q^{-1})_{\ell - r - 1} q^{-\binom{r}{2} - r(\ell-r)} z^r\bigg)\\
			&\qquad \cdot \bigg(\sum_{\substack{(r_2,r_3,\ldots) \\ r_i = \# D_i}}^{}q^{-\sum_{i \in D \setminus D_1}^{} (i-1)} \prod_{i\geq 2}^{} {\qbinom{m_i}{r_i}}_q z^{r_2+r_3+\cdots}\bigg)
		\end{align*}
		
		Now, the second factor factors as $\prod_{i\geq 2}^{}  \bigg(\sum_{r_i \geq 0}^{} q^{-\sum_{j \in D_i}^{} (j-1)} {\qbinom{m_i}{r_i}}_q z^{r_i}\bigg)$.
		Note that $D_i = \{\mu'_i-r_i+1,\ldots,\mu'_i\} \subseteq \{\mu'_i-m_i+1,\ldots,\mu'_i\}$, so using \autoref{lem:qbinlastsets} we get
		\begin{align*}
			&\sum_{r_i = 0}^{m_i} q^{-\sum_{j \in D_i}^{} (j-1)} {\qbinom{m_i}{r_i}}_q z^{r_i} = \prod_{j=\mu'_{i+1}}^{\mu_i'-1} (1+q^{-j}z).
		\end{align*}
		
		Then
		\begin{align*}
			\cR_\mu(z)
			&= q^{n(\mu)} \bigg(\sum_{r=0}^{m_1} {\qbinom{m_1}{r}}_q (q^{-1}t;q^{-1})_{\ell - r - 1} q^{\binom{r+1}{2} - r\ell} z^r\bigg) \prod_{j=0}^{\ell-m_1-1} (1+q^{-j}z)
			\\
			&= (1+z)\, q^{n(\mu)} \bigg(\sum_{r=0}^{m_1} {\qbinom{m_1}{r}}_q (q^{-1}t;q^{-1})_{\ell - r - 1} q^{\binom{r+1}{2} - r\ell} z^r\bigg) \prod_{j=1}^{\ell-m_1-1} (1+q^{-j}z),
		\end{align*}
		which, since $L = \ell - m_1$, is the stated form.
	\end{proof}
	
	By \autoref{lem:LmuF} and \autoref{lem:RmuS}, proving $\cR_\mu(z) = (1+z)\cL_\mu(z)$ is equivalent to proving
	\begin{align}\label{eq:FSprod}
		(q^{-1}t;q^{-1})_{L-1} \cdot F(L,m_1,z) = S(L,m_1,z) \cdot \prod_{j=1}^{L-1} (1+q^{-j}z),
	\end{align}
	where we write $L = \ell - m_1$ as above. We prove \eqref{eq:FSprod} by induction on $m_1$: the base case $m_1 = 0$ is the $q$-binomial theorem, and the induction step applies the same $q$-Pascal identity to both sides, producing the following two recurrences, which are matched in \autoref{prop:LR}. First, the recurrence on the tableau side is as follows.
	
	\begin{lemma}\label{lem:Frec}
    For $n \in \ZZ_{\geq 1}$ and $m \in \ZZ_{\geq 0}$,
		\begin{align*}
			F(n,m+1,z) = (1+q^{-1}z)F(n,m,q^{-1}z) - q^{-(n+m)}t F(n,m,z)
		\end{align*}
	\end{lemma}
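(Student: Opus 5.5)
We prove the identity by comparing coefficients of $z^{k}$ on both sides of \autoref{lem:Frec}, working term by term in the double sum \eqref{eq:Fdef} defining $F$. The variable $\alpha$ and the factor ${\qbinom{n-1}{\alpha}}_q$ are spectators. Only the $\beta$-part and the Pochhammer symbol depend on the second argument $s$. So we fix $\alpha$ and abbreviate $x := q^{-(n-\alpha)}t$. The summand of $F(n,s,z)$ is then
\[
(x;q^{-1})_{s-\beta}\, q^{\binom{\alpha+1}{2}-\alpha(n+s)+\binom{\beta}{2}-\beta s}{\qbinom{n-1}{\alpha}}_q{\qbinom{s}{\beta}}_q z^{\alpha+\beta}.
\]

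Take $s=m+1$ and expand the binomial by the $q$-Pascal identity in the form
\[
{\qbinom{m+1}{\beta}}_q={\qbinom{m}{\beta}}_q+q^{m+1-\beta}{\qbinom{m}{\beta-1}}_q,
\]
which is \eqref{eq:qPascal} after the symmetry $k\mapsto n-k$. This splits $F(n,m+1,z)$ into two sums, which we handle separately.

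\emph{First sum (the ${\qbinom{m}{\beta}}_q$ terms).} Peel off the last factor of the Pochhammer symbol:
\[
(x;q^{-1})_{m+1-\beta}=(x;q^{-1})_{m-\beta}\bigl(1-xq^{-(m-\beta)}\bigr),
\qquad xq^{-(m-\beta)}=t\,q^{-(n+m)}q^{\alpha+\beta}.
\]
\begin{itemize}
\item The part with the factor $1$ should equal the summand of $F(n,m,q^{-1}z)$ with the same $(\alpha,\beta)$. This needs the exponent check
\[
-\alpha(n+m+1)-\beta(m+1)=-\alpha(n+m)-\beta m-\alpha-\beta,
\]
which holds identically.
\item The part with the factor $-xq^{-(m-\beta)}$ contributes $-t\,q^{-(n+m)}q^{\alpha+\beta}$ times the same summand. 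The extra $q^{\alpha+\beta}$ exactly cancels the shift from $s=m+1$ to $s=m$ in the exponent. So this part reproduces $-q^{-(n+m)}t\,F(n,m,z)$ term by term.
\end{itemize}

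\emph{Second sum (the $q^{m+1-\beta}{\qbinom{m}{\beta-1}}_q$ terms).} Reindex by $\beta'=\beta-1$. The Pochhammer symbol becomes $(x;q^{-1})_{m-\beta'}$ with no further splitting. The $\beta$-exponent becomes
\[
\binom{\beta'+1}{2}-(\beta'+1)(m+1)+(m-\beta')=\binom{\beta'}{2}-\beta'm-\beta'-1,
\]
and the $\alpha$-exponent is $-\alpha(n+m)-\alpha$. Together with $z^{\alpha+\beta'+1}$, this is precisely the summand of $q^{-1}z\,F(n,m,q^{-1}z)$. Adding the first part of the first sum gives the factor $(1+q^{-1}z)F(n,m,q^{-1}z)$.

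\emph{Boundary terms.} Finally we check the ranges of summation. The term $\beta=m+1$ occurs only in the second sum, since ${\qbinom{m}{m+1}}_q=0$. The term $\beta=0$ occurs only in the first sum, since ${\qbinom{m}{-1}}_q=0$. For the first sum the peeling step above is valid because $m+1-\beta\ge 1$ whenever $\beta\le m$. Hence no stray terms survive.

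The main obstacle is only the exponent bookkeeping. One must choose the version of $q$-Pascal in which ${\qbinom{m}{\beta}}_q$ carries coefficient $1$, and peel off the \emph{last} factor of the Pochhammer symbol, not the first. The other choices leave unmatched powers of $q$. With these choices every identity reduces to the two displayed exponent checks above.
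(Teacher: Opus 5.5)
Your proof is correct and follows the paper's argument exactly. You use the same form of the $q$-Pascal identity and peel off the last Pochhammer factor to obtain $F(n,m,q^{-1}z)-q^{-(n+m)}t\,F(n,m,z)$; the shift $\beta\mapsto\beta+1$ in the second sum then gives $q^{-1}z\,F(n,m,q^{-1}z)$, with your exponent and boundary checks filling in details the paper leaves implicit.
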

	
	\begin{proof}
		Recall from \eqref{eq:Fdef} that
		\begin{align}
			F(n,m,z) := \sum_{\substack{m \geq \beta \geq 0 \\ n-1 \geq \alpha \geq 0}}^{} (q^{-(n-\alpha)}t;q^{-1})_{m - \beta}\, q^{\binom{\alpha+1}{2} - \alpha(n+m) + \binom{\beta}{2} - \beta m}{\qbinom{n-1}{\alpha}}_q {\qbinom{m}{\beta}}_q z^{\alpha+\beta}.
		\end{align}
		Using the $q$-Pascal identity \eqref{eq:qPascal} 
		\begin{align*}
			{\qbinom{m+1}{\beta}}_q = {\qbinom{m}{\beta}}_q + q^{m-\beta+1}{\qbinom{m}{\beta-1}}_q,
		\end{align*} we get
		\begin{align*}
			F(n,m+1,z) = F_1 + F_2
		\end{align*}
		where since ${m \brack m+1}_q = 0$, the first term is
		\begin{align*}
			&F_1 = \sum_{\substack{m \geq \beta \geq 0 \\ n-1 \geq \alpha \geq 0}}^{} (q^{-(n-\alpha)}t;q^{-1})_{m+1 - \beta} q^{\binom{\alpha+1}{2} - \alpha(n+m+1) + \binom{\beta}{2} - \beta (m+1)}{\qbinom{n-1}{\alpha}}_q {\qbinom{m}{\beta}}_q z^{\alpha+\beta}.
        \end{align*}
		% 	\\
		% 	&= \sum_{\substack{m \geq \beta \geq 0 \\ n-1 \geq \alpha \geq 0}}^{} {\qbinom{m}{\beta}}_q (1-q^{-(n-\alpha+m-\beta)}t) (q^{-(n-\alpha)}t;q^{-1})_{m - \beta}\cdot q^{\binom{\alpha+1}{2} - \alpha(n+m) + \binom{\beta}{2} - \beta m -\alpha-\beta} {\qbinom{n-1}{\alpha}}_q z^{\alpha+\beta}
		% 	\\
		% 	&= F(n,m,q^{-1}z) - q^{-(n+m)}t F(n,m,z),
		% \end{align*}
        Using $(a;q^{-1})_{m+1-\beta}=(1-aq^{-(m-\beta)})(a;q^{-1})_{m-\beta}$ with $a=q^{-(n-\alpha)}t$, we get that 
        \begin{align*}
            F_1 = F(n,m,q^{-1}z) - q^{-(n+m)}t F(n,m,z),
        \end{align*}
		and in the second term shift $\beta \mapsto \beta+1$ to get
        \begin{align*}
            F_2 = q^{-1}z F(n,m,q^{-1}z).
        \end{align*}
		% \begin{align*}
		% 	F_2 &= \sum_{\substack{m+1 \geq \beta \geq 0 \\ n-1 \geq \alpha \geq 0}}^{} q^{m-\beta+1+\binom{\alpha+1}{2} - \alpha(n+m) + \binom{\beta}{2} - \beta m -\alpha-\beta}{\qbinom{m}{\beta-1}}_q  (q^{-(n-\alpha)}t;q^{-1})_{m +1 - \beta}  {\qbinom{n-1}{\alpha}}_q z^{\alpha+\beta}
		% 	\\
		% 	&= q^{-1}z F(L,m,q^{-1}z)
		% \end{align*} by using the shift $\beta \mapsto \beta+1$.
	\end{proof}
	
	The recurrence on the right hand side, proved by the same computation is as follows.
	
	\begin{lemma}\label{lem:Srec}
    For $n \in \ZZ_{\geq 1}$ and $m \in \ZZ_{\geq 0}$,
		\begin{align*}
			S(n,m+1,z) = (1+zq^{-n})S(n,m,q^{-1}z) - q^{-(n+m)} t S(n,m,z).
		\end{align*}
	\end{lemma}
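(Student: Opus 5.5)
Mirroring the proof of \autoref{lem:Frec}, I apply the $q$-Pascal identity \eqref{eq:qPascal} to the binomial in \eqref{eq:Sdef}, in the form
\begin{align*}
{\qbinom{m+1}{r}}_q = {\qbinom{m}{r}}_q + q^{m-r+1}{\qbinom{m}{r-1}}_q .
\end{align*}
This splits $S(n,m+1,z) = S_1 + S_2$. Here $S_1$ is the sum over $0 \le r \le m$ with summand ${\qbinom{m}{r}}_q (q^{-1}t;q^{-1})_{n+m-r}\, q^{\binom{r+1}{2}-r(n+m+1)} z^r$. Also $S_2$ is the sum over $1 \le r \le m+1$ with summand $q^{m-r+1}{\qbinom{m}{r-1}}_q (q^{-1}t;q^{-1})_{n+m-r}\, q^{\binom{r+1}{2}-r(n+m+1)} z^r$.

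For $S_1$, peel off the last factor of the Pochhammer symbol:
\begin{align*}
(q^{-1}t;q^{-1})_{n+m-r} = (1-q^{-(n+m-r)}t)\,(q^{-1}t;q^{-1})_{n+m-r-1}.
\end{align*}
Then $q^{-r(n+m+1)}z^r = q^{-r(n+m)}(q^{-1}z)^r$, so the term with $1$ gives exactly $S(n,m,q^{-1}z)$. In the term with $-q^{-(n+m-r)}t$, the extra factor $q^{r}$ cancels the extra $q^{-r}$, and what remains is $-q^{-(n+m)}t\,S(n,m,z)$. This is the same bookkeeping as for $F_1$.

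For $S_2$, shift $r \mapsto r+1$ with $0 \le r \le m$. The Pochhammer symbol becomes $(q^{-1}t;q^{-1})_{n+m-r-1}$, which is exactly the one appearing in $S(n,m,\cdot)$ at index $r$. The powers of $q$ are
\begin{align*}
(m-r) + \binom{r+2}{2} - (r+1)(n+m+1).
\end{align*}
Using $\binom{r+2}{2} = \binom{r+1}{2} + r + 1$, this equals
\begin{align*}
\binom{r+1}{2} - r(n+m) - r - n,
\end{align*}
since $m - r + r + 1 - (r+1)(n+m+1) + r(n+m) = -r-n$. With the factor $z^{r+1}$, the summand is $zq^{-n}$ times the $r$-th summand of $S(n,m,q^{-1}z)$. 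Hence $S_2 = zq^{-n}S(n,m,q^{-1}z)$.

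Adding the pieces gives $(1+zq^{-n})S(n,m,q^{-1}z) - q^{-(n+m)}tS(n,m,z)$, as claimed. The only delicate point is the exponent arithmetic in $S_2$. It is where the factor $q^{-n}$ appears, in place of the $q^{-1}$ seen in \autoref{lem:Frec}, and I would double-check it on the case $m=0$.
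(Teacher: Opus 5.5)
Your proposal is correct and follows the paper's proof essentially line by line: the same $q$-Pascal split, peeling the last Pochhammer factor in the first sum, and the shift $r \mapsto r+1$ in the second sum with the same exponent computation producing the factor $zq^{-n}$. The one point you leave implicit, and which the paper states, is that writing $(q^{-1}t;q^{-1})_{n+m-r} = (1-q^{-(n+m-r)}t)(q^{-1}t;q^{-1})_{n+m-r-1}$ requires $n+m-r \geq 1$; this holds for all $0 \le r \le m$ precisely because $n \geq 1$.
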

	\begin{proof}
		Recall from \eqref{eq:Sdef} that
		\begin{align}
			S(n,m,z) := \sum_{r=0}^{m} {\qbinom{m}{r}}_q (q^{-1}t;q^{-1})_{n + m - r - 1}\, q^{\binom{r+1}{2} - r(n+m)} z^r.
		\end{align}
		Using the same $q$-Pascal identity as in \autoref{lem:Frec},
        \begin{align*}
            S(n,m+1,z) &= \sum_{r=0}^{m+1} {\qbinom{m+1}{r}}_q (q^{-1}t;q^{-1})_{n + m - r}\, q^{\binom{r+1}{2} - r(n+m+1)} z^r
            \\
            &= \sum_{r=0}^{m} {\qbinom{m}{r}}_q (q^{-1}t;q^{-1})_{n + m - r}\, q^{\binom{r+1}{2} - r(n+m+1)} z^r
            \\
            &\qquad + \sum_{r=1}^{m+1} q^{m+1-r}{\qbinom{m}{r-1}}_q (q^{-1}t;q^{-1})_{n + m - r}\, q^{\binom{r+1}{2} - r(n+m+1)} z^r
        \end{align*}
        For $0\le r\le m$ we have $n+m-r\ge n\ge 1$, so we may write
        $$(q^{-1}t;q^{-1})_{n+m-r}=(1-q^{-(n+m-r)}t)(q^{-1}t;q^{-1})_{n+m-r-1}.$$ In the second sum set $s=r-1$. Then        
		\begin{align*}
			S(n,m+1,z) &= \sum_{r=0}^{m} {\qbinom{m}{r}}_q (1-q^{-(n+m-r)}t)(q^{-1}t;q^{-1})_{n+m-r-1} q^{\binom{r+1}{2} - r(n+m+1)} z^r
            \\
            &\qquad + \sum_{s=0}^{m} q^{m-s}{\qbinom{m}{s}}_q (q^{-1}t;q^{-1})_{n + m - s-1}\, q^{\binom{s+2}{2} - (s+1)(n+m+1)} z^{s+1}
            \\
            &= S(n,m,q^{-1}z) - q^{-(n+m)}t S(n,m,z) + zq^{-n} S(n,m,q^{-1}z).
		\end{align*}
		% The first summand gives $S(L,m,q^{-1}z) - q^{-(L+m)}t S(L,m,z)$, and the second summand is $zq^{-L} S(L,m,q^{-1}z)$, since
		% \begin{align*}
		% 	m-s+\binom{s+2}{2} - (s+1)(L+m+1) &= \binom{s+1}{2}- s (L+m)  - L - s. 
		% \end{align*}
  %       This completes the proof.
	\end{proof}
	
	Now the two recurrences assemble into the desired identity.
	
	\begin{proposition}\label{prop:LR}
		\begin{align*}
			(1+z)\cL_\mu(z) = \cR_\mu(z).
		\end{align*}
	\end{proposition}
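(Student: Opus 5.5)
By \autoref{lem:LmuF} and \autoref{lem:RmuS}, the proposition is equivalent to the identity \eqref{eq:FSprod},
\begin{align*}
(q^{-1}t;q^{-1})_{L-1}\, F(L,m,z) = S(L,m,z)\prod_{j=1}^{L-1}(1+q^{-j}z),
\end{align*}
with $m=m_1$ and $L=\ell-m_1\geq 1$; note $L\ge1$ because $\mu\neq(1^n)$. I will prove this for all $m\ge 0$ by induction on $m$, keeping $L$ fixed and treating $z$ as a free variable. Set $G(z)=\prod_{j=1}^{L-1}(1+q^{-j}z)$ and $P=(q^{-1}t;q^{-1})_{L-1}$. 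Multiplying \eqref{eq:FSprod} by $(1+z)$ recovers $\cR_\mu(z)=(1+z)\cL_\mu(z)$.

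\emph{Base case $m=0$.} Here only $\beta=0$ contributes and the Pochhammer factor in $F$ is empty, so
\begin{align*}
F(L,0,z)=\sum_{\alpha=0}^{L-1} q^{\binom{\alpha+1}{2}-\alpha L}{\qbinom{L-1}{\alpha}}_q z^\alpha .
\end{align*}
By \eqref{eq:qbininv} with $a=1$, $b=L$, this equals $\prod_{j=1}^{L-1}(1+q^{-j}z)=G(z)$. On the other side, $S(L,0,z)=(q^{-1}t;q^{-1})_{L-1}=P$. Hence both sides equal $P\,G(z)$.

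\emph{Induction step.} Assume $P F(L,m,z)=S(L,m,z)G(z)$ holds as an identity in $z$, so it also holds with $z$ replaced by $q^{-1}z$. By \autoref{lem:Frec},
\begin{align*}
P F(L,m+1,z) = (1+q^{-1}z)\,S(L,m,q^{-1}z)\,G(q^{-1}z) - q^{-(L+m)}t\, S(L,m,z)\,G(z).
\end{align*}
The key observation is the telescoping relation
\begin{align*}
(1+q^{-1}z)\,G(q^{-1}z)=\prod_{j=1}^{L}(1+q^{-j}z)=(1+q^{-L}z)\,G(z).
\end{align*}
Substituting it gives
\begin{align*}
P F(L,m+1,z) = G(z)\big[(1+q^{-L}z)S(L,m,q^{-1}z)-q^{-(L+m)}tS(L,m,z)\big],
\end{align*}
and by \autoref{lem:Srec} the bracket is exactly $S(L,m+1,z)$. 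This completes the induction.

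The one step needing care is this shift identity, which reconciles the factor $(1+q^{-1}z)$ in \autoref{lem:Frec} with the factor $(1+q^{-L}z)$ in \autoref{lem:Srec}. One should also check that the recurrences apply: both lemmas are stated for first argument $n\ge1$, and here $n=L\ge1$. Finally, $m_1$ must range over all nonnegative integers with $L$ fixed, which is fine because \eqref{eq:FSprod} is a formal identity in $L,m,z$ and no longer refers to $\mu$.
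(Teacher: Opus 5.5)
Your proof is correct and follows the same route as the paper. Both reduce to \eqref{eq:FSprod}, check the base case $m_1=0$ via \eqref{eq:qbininv}, and in the induction step combine \autoref{lem:Frec}, the induction hypothesis at $z$ and $q^{-1}z$, the shift $(1+q^{-1}z)\prod_{j=1}^{L-1}(1+q^{-j-1}z)=(1+q^{-L}z)\prod_{j=1}^{L-1}(1+q^{-j}z)$, and \autoref{lem:Srec}. Your remarks that $L\ge 1$ and that the induction runs over $m_1$ with $L$ fixed as a formal identity are small clarifications the paper leaves implicit.
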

	\begin{proof}
		The statement is equivalent to proving \eqref{eq:FSprod}. When $m = 0$, by \eqref{eq:qbininv},
		\begin{align*}
			\prod_{j=1}^{L-1}(1+q^{-j}z) = \sum_{j \geq 0}^{} q^{\binom{j}{2}-j(L-1)}{\qbinom{L-1}{j}}_q z^j = \sum_{j=0}^{L-1} q^{\binom{j+1}{2}-jL} {\qbinom{L-1}{j}}_q  z^j,
		\end{align*}
		so both sides of \eqref{eq:FSprod} equal $(q^{-1}t;q^{-1})_{L-1} \cdot \prod_{j=1}^{L-1}(1+q^{-j}z)$. For the induction step,
		\begin{align*}
			&(q^{-1}t;q^{-1})_{L-1} \cdot F(L,m+1,z)\\
			&=  (q^{-1}t;q^{-1})_{L-1} \cdot \big((1+q^{-1}z)F(L,m,q^{-1}z) - q^{-(L+m)}t F(L,m,z)\big)
			\\
			&=  (1+q^{-1}z)\prod_{j=2}^{L} (1+q^{-j}z)\, S(L,m,q^{-1}z)-  q^{-(L+m)}t \prod_{j=1}^{L-1} (1+q^{-j}z)\, S(L,m,z)
			\\
			&= \prod_{j=1}^{L-1} (1+q^{-j}z) \cdot \bigg((1+q^{-L}z)S(L,m,q^{-1}z) - q^{-(L+m)}t S(L,m,z)\bigg)
			\\
			&= \prod_{j=1}^{L-1} (1+q^{-j}z) \cdot S(L,m+1,z),
		\end{align*}
		where the second equality uses the induction hypothesis and the last equality uses \autoref{lem:Srec}.
	\end{proof}
	
	This completes the proof of \autoref{thm:mainlocal} when $\mu$ is not a single column.
	
	\subsection{The case when \texorpdfstring{$\mu$}{mu} is a single column}\label{sec:hooksinglecol}
	
	When $\mu = (1^n)$ the situation is different in two ways: all entries of $T$ are distinct, so the factor $[m]_t$ is no longer trivial and \autoref{lem:psiqthookprod} does not apply; on the other hand, the recursion \eqref{eq:brecPieri} has only one term on the right-hand side, and it is simple enough to solve explicitly.
	
	Define
	\begin{align*}
		C_{n,k} := b_{(n-k,1^k),(1^{n})}(q,t)
	\end{align*}
	and
	\begin{align}\label{eq:gamman}
		\gamma_n := q^{\binom{n}{2}}(q^{-1}t;q^{-1})_{n-1} = \prod_{i=1}^{n-1}(q^i-t).
	\end{align}
	
	Specializing \eqref{eq:brecPieri} to a single column gives the recursion
	\begin{align*}
		C_{n,k} + C_{n,k-1} = \gamma_{n-k} \cdot {\qbinom{n}{k}}_q \qquad \hbox{ for } 1\leq k \leq n-2,
	\end{align*}
	with initial value $C_{n,0} = \gamma_n$. This recursion has a unique solution, which we can write down:
	
	\begin{proposition}\label{prop:Cnk}
		For $n \geq 2$ and $n-1 > k \geq 0$,
		$$ C_{n,k} = \gamma_{n-k} \cdot \sum_{i=0}^{k} {\qbinom{n-1-i}{n-k-1}}_q t^i.$$
	\end{proposition}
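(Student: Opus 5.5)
Since the recursion $C_{n,k} + C_{n,k-1} = \gamma_{n-k}\,{\qbinom{n}{k}}_q$ for $1 \le k \le n-2$, together with the initial value $C_{n,0}=\gamma_n$, determines all $C_{n,k}$ with $0\le k\le n-2$ uniquely, the plan is to check that the proposed closed form
\begin{align*}
\widetilde{C}_{n,k} := \gamma_{n-k}\sum_{i=0}^{k}{\qbinom{n-1-i}{n-k-1}}_q t^i
\end{align*}
has the correct initial value and satisfies the same recursion. The argument is then an induction on $k$.

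The initial value is immediate: for $k=0$ the sum reduces to the single term ${\qbinom{n-1}{n-1}}_q=1$, so $\widetilde{C}_{n,0}=\gamma_n$.

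For the recursion step, the two terms carry different prefactors. We use $\gamma_{n-k+1} = \gamma_{n-k}\,(q^{n-k}-t)$, which follows from \eqref{eq:gamman}. Dividing through by $\gamma_{n-k}$, the recursion becomes
\begin{align*}
\sum_{i=0}^{k}{\qbinom{n-1-i}{n-k-1}}_q t^i + (q^{n-k}-t)\sum_{i=0}^{k-1}{\qbinom{n-1-i}{n-k}}_q t^i = {\qbinom{n}{k}}_q .
\end{align*}
We verify this by comparing coefficients of $t^i$.

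\begin{itemize}
\item For $1\le i\le k$, the coefficient of $t^i$ on the left is
\begin{align*}
{\qbinom{n-1-i}{n-k-1}}_q + q^{n-k}{\qbinom{n-1-i}{n-k}}_q - {\qbinom{n-i}{n-k}}_q .
\end{align*}
This vanishes by the $q$-Pascal identity \eqref{eq:qPascal}, applied with top index $n-i$ and bottom index $n-k$:
\begin{align*}
{\qbinom{n-i}{n-k}}_q = {\qbinom{n-1-i}{n-k-1}}_q + q^{n-k}{\qbinom{n-1-i}{n-k}}_q .
\end{align*}
At $i=k$ the middle term vanishes, since ${\qbinom{n-1-k}{n-k}}_q=0$. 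The identity remains correct there, because both remaining terms equal $1$.
\item For $i=0$, the left side is
\begin{align*}
{\qbinom{n-1}{n-k-1}}_q + q^{n-k}{\qbinom{n-1}{n-k}}_q = {\qbinom{n}{n-k}}_q = {\qbinom{n}{k}}_q ,
\end{align*}
again by \eqref{eq:qPascal}, together with the symmetry of the $q$-binomial coefficients.
\end{itemize}

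Hence $\widetilde{C}_{n,k}$ satisfies the recursion and the initial condition, and induction on $k$ gives $C_{n,k}=\widetilde{C}_{n,k}$.

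The main obstacle is bookkeeping rather than ideas. Two points need care. First, the factor $-t$ in $(q^{n-k}-t)$ shifts the index $i$ by one. Second, the boundary terms at $i=0$ and $i=k$ must be handled correctly. One should also check that the range $1\le k\le n-2$ keeps the bottom index $n-k\ge 2$, so the $q$-Pascal identity is applied inside its valid range. Finally, the recursion as specialized from \eqref{eq:brecPieri} should be confirmed to hold for exactly this range of $k$.
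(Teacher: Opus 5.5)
Your proposal is correct and follows the paper's proof. Both check the initial value $\gamma_n$ and then verify the recursion $C_{n,k}+C_{n,k-1}=\gamma_{n-k}{\qbinom{n}{k}}_q$ using $\gamma_{n-k+1}=(q^{n-k}-t)\gamma_{n-k}$ and the $q$-Pascal identity; the only cosmetic difference is that you compare coefficients of $t^i$ one at a time, while the paper applies $q$-Pascal to the whole sum and then telescopes.
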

	\begin{proof}
		For $k = 0$ both sides equal $\gamma_n$. Hence, by induction on $k$, it suffices to check that the claimed formula satisfies the same recursion as $C_{n,k}$. Indeed, assuming the formula for $(n,k)$ and $(n,k-1)$,
		\begin{align*}
			&C_{n,k} + C_{n,k-1} = \gamma_{n-k} \bigg( \sum_{i=0}^{k} {\qbinom{n-1-i}{n-k-1}}_q t^i + (q^{n-k}-t) \cdot \sum_{i=0}^{k-1} {\qbinom{n-1-i}{n-k}}_q t^i\bigg)
			\\
			&= \gamma_{n-k} \bigg( \sum_{i=0}^{k-1} \bigg({\qbinom{n-1-i}{n-k-1}}_q +q^{n-k} {\qbinom{n-1-i}{n-k}}_q\bigg)t^i + t^k - \sum_{i=0}^{k-1} {\qbinom{n-1-i}{n-k}}_q t^{i+1}\bigg)
			\\
			&= \gamma_{n-k} \bigg( \sum_{i=0}^{k-1} {\qbinom{n-i}{n-k}}_q t^i  + t^k - \sum_{i=0}^{k-1} {\qbinom{n-1-i}{n-k}}_q t^{i+1}\bigg)\\
			&= \gamma_{n-k} {\qbinom{n}{k}}_q.
		\end{align*}
	\end{proof}
	
	It remains to show that the sum $\sum_{T \in \SSYT((n-k,1^k),(1^{n}))}^{} \psi_{T}(q,t)$ equals the formula of \autoref{prop:Cnk}. The content of $T$ being $(1^{n})$, the entries of $T$ are pairwise distinct, so the first row is strictly increasing and $T(1,1) = 1$; write
	$$R = \{T(1,2),\ldots, T(1,n-k)\} \subseteq \{2,\ldots,n\}$$
	for the remaining entries of the first row, and $r_1 < \cdots < r_{n-k-1}$ for the elements of $R$ in increasing order. Then $m = r_1 - 1$: the values $2,\ldots,r_1-1$ cannot appear in the first row (which is strictly increasing with smallest entry $r_1$ after $1$), so they occupy the beginning of the first column in order, and the run stops there because $r_1$ itself is in the first row. Also, every box $(1,c)$ with $c \geq 2$ lies in $D(T)$. Hence, by \S\ref{sec:charge},
	\begin{align*}
		\psi_{T}(q,t) = [r_1-1]_t \cdot (q^{-1}t;q^{-1})_{n-k-1}\, q^{\binom{n-k}{2}} \cdot q^{\sum_{x \in R}^{}(n+1-x) - \binom{n-k}{2}},
	\end{align*}
	so
	\begin{align*}
		\sum_{T \in \SSYT((n-k,1^k),(1^{n}))}^{} \psi_{T}(q,t)
		&= \gamma_{n-k} \cdot \sum_{R \in \binom{[2,n]}{n-k-1}}^{} [r_1-1]_t\, q^{\sum_{x \in R}^{}(n+1-x) - \binom{n-k}{2}}
		\\
		&= \gamma_{n-k} \cdot \sum_{u \geq 0}^{} t^u {\qbinom{n-u-1}{n-k-1}}_q = C_{n,k},
	\end{align*}
	where the middle equality follows by writing $[r_1-1]_t = \sum_{u \leq r_1-2} t^u$ and using \eqref{eq:qcountingsubsets}, and the last equality is \autoref{prop:Cnk}.
	
	This completes the proof of \autoref{thm:mainlocal}.

	\section{Two-column shapes}\label{sec:twocol}
	
	In this section we prove \autoref{thm:twocol-intro}, which says that \autoref{conj:B} holds when $\lambda_1 \leq 2$.
	
	By \autoref{prop:reductionprinc}, for $r,s,m,n \in \ZZ_{\geq 0}$, with $r \geq m$, $$ b_{(2^r1^s),(2^m1^n)}(q,t) = b_{(2^{r-m}1^s),(1^n)}(q,t),$$ hence to prove that $b_{\lambda,\mu}(t+1,t) \in \ZZ_{\geq0}[t]$ when $\lambda_1 \leq 2$, it is enough to show that $b_{(2^r1^s),(1^{2r+s})}(t+1,t) \in \ZZ_{\geq 0}[t]$. We first compute the coefficient $b_{(2^r1^s),(1^N)}(q,t)$ explicitly as an alternating sum (\autoref{lem:twocolumn}). Next, we pair consecutive terms in the alternating sum and each pair is rewritten as a visibly positive part plus a remainder, and the remainder is shown to be positive at $q=t+1$ in \autoref{lem:Tpos}.
	
	Let $$D^{(N)}_j(u)={\qbinom{N}{j}}_u-{\qbinom{N}{j-1}}_u.$$

	\begin{lemma}\label{lem:twocolumn}
	Let $r,s\in\ZZ_{\ge0}$ and put $N=2r+s$.  Then
	\begin{align*}
		b_{(2^r1^s),(1^N)}(q,t)
		=\sum_{j=0}^{r}(-1)^{r-j}\,t^{\binom{r-j+1}{2}}\,
		D_j^{(N)}(q)\,
		{\qbinom{s+r-j}{r-j}}_t .
	\end{align*}
\end{lemma}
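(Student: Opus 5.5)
We work with the definition $b_{\lambda,(1^N)}(q,t)=\langle P_\lambda(t), Q'_{(1^N)}(q)\rangle$ and aim to express $P_{(2^r1^s)}(t)$ through products that pair cleanly with $Q'_{(1^N)}(q)$. The idea is to invert the vertical-strip Pieri rule \eqref{eq:Pieri}. For shapes with at most two columns, $e_a e_b$ expands as
\[
e_ae_b=\sum_{j}{\qbinom{\cdot}{\cdot}}_t P_{(2^j1^{a+b-2j})}(t),
\]
with explicit $t$-binomial coefficients. Taking $P_{(1^a)}(t)=e_a$ and applying \eqref{eq:Pieri}, we get
\[
e_{r+s}e_r=\sum_{i=0}^{r}{\qbinom{s+2i}{i}}_t P_{(2^{r-i}1^{s+2i})}(t).
\]
This system is unitriangular in the number of $2$-rows. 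Inverting it should give
\[
P_{(2^r1^s)}(t)=\sum_{i\ge0}(-1)^i t^{\binom{i+1}{2}}{\qbinom{s+i}{i}}_t\, e_{r+s+i}e_{r-i}.
\]
This is a Jacobi–Trudi-type $t$-inversion of a $q$-binomial matrix.

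To justify the inversion, we will verify the orthogonality relation
\[
\sum_i (-1)^i t^{\binom{i+1}{2}}{\qbinom{s+i}{i}}_t{\qbinom{s+2i+2k}{k}}_t=\delta_{k,0}
\]
after reindexing. This is a standard $q$-Chu–Vandermonde/Gauss inversion, and we can prove it by induction on $r$ using the $q$-Pascal identity \eqref{eq:qPascal}.

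Next we evaluate $\langle e_ae_b, Q'_{(1^N)}(q)\rangle$ with $a+b=N$. By \eqref{eq:blam1nprinc}, or directly from $Q'_{(1^N)}(q)=(q;q)_N e_N[X/(1-q)]$, this pairing is $(q;q)_N$ times the coefficient of $h_ah_b$-duality. Concretely, $\langle e_ae_b, e_N[X/(1-q)]\rangle = h_a h_b$ evaluated at $\omega$-twisted principal specialization, which gives
\[
\frac{(q;q)_N\, q^{\binom a2+\binom b2}}{(q;q)_a(q;q)_b}\cdot q^{?}.
\]
Equivalently, $\langle e_ae_b,Q'_{(1^N)}(q)\rangle={\qbinom{N}{a}}_q$, possibly up to a power of $q$. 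As a sanity check, $a=N$ must give $b_{(1^N),(1^N)}=1$, and the Kostka–Foulkes coefficient $K_{(1^N),(1^N)}(q)$ confirms the normalization. So each term contributes ${\qbinom{N}{r-i}}_q$.

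The difference $D^{(N)}_j$ does not fall out of this directly, so the final step is an Abel summation. Write ${\qbinom{s+i}{i}}_t$ as a telescoping difference, using
\[
{\qbinom{s+i}{i}}_t={\qbinom{s+i+1}{i}}_t - t^{?}\cdots,
\]
or, more cleanly, regroup the alternating sum. The stated formula is
\[
\sum_j(-1)^{r-j}t^{\binom{r-j+1}{2}}\Big({\qbinom Nj}_q-{\qbinom N{j-1}}_q\Big){\qbinom{s+r-j}{r-j}}_t .
\]
Expanding the differences and collecting the coefficient of ${\qbinom{N}{j}}_q$ produces
\[
(-1)^{r-j}\Big[t^{\binom{r-j+1}2}{\qbinom{s+r-j}{r-j}}_t+t^{\binom{r-j}2}{\qbinom{s+r-j-1}{r-j-1}}_t\Big].
\]
So the correct inversion coefficient for $e_{N-j}e_j$ must be this combination. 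The discrepancy with the naive coefficient above means the Pieri expansion of $e_{r+s}e_r$ has to be redone carefully. The coefficient of $P_{(2^{r-i}1^{s+2i})}$ is ${\qbinom{s+2i}{i}}_t$ from \eqref{eq:psidef}, and the inverse matrix entries will be exactly these two-term combinations. Pinning down this inverse matrix and proving the $t$-binomial inversion identity is the main obstacle. We will first guess the inverse from $r=1,2$ and then prove it by induction on $r$ using the $t$-Pascal identity.
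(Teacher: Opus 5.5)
\paragraph{A gap in the first step.} Your outline has the same three stages as the paper's proof: expand $P_{(2^r1^s)}(t)$ in the products $e_je_{N-j}$, pair with $Q'_{(1^N)}(q)$, and Abel-sum to produce $D^{(N)}_j(q)$.

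The pairing stage is fine once cleaned up. Since $h_n[\frac{1}{1-q}]=1/(q;q)_n$, one gets exactly $\langle e_je_{N-j},Q'_{(1^N)}(q)\rangle={\qbinom{N}{j}}_q$, with no extra power of $q$. The factor $q^{\binom a2+\binom b2}$ you wrote belongs to $e_n[\frac{1}{1-q}]$, not $h_n$.

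The gap is the first stage, which carries the real content of the lemma. Your Pieri expansion $e_{r+s}e_r=\sum_{i}{\qbinom{s+2i}{i}}_tP_{(2^{r-i}1^{s+2i})}(t)$ is correct, so nothing there needs to be ``redone''. The inverse you propose, however, is wrong. For $r=1$, $s=0$ it gives $P_{(2)}(t)=e_1^2-te_2$, whereas $P_{(2)}(t)=m_2+(1-t)m_{11}=e_1^2-(1+t)e_2$. Pairing your version would give $b_{(2),(1^2)}=1+q-t$ instead of $q-t$. In fact your guessed coefficient is exactly $G_j$, the factor multiplying $D^{(N)}_j$ in the final formula, not the coefficient of $e_je_{N-j}$. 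The orthogonality relation you planned to prove is therefore not a true identity.

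\paragraph{The corrected coefficients are circular.} You then read off the coefficient of ${\qbinom{N}{j}}_q$ in the target formula and declare that it must be the coefficient of $e_je_{N-j}$. These coefficients,
\[
c_{r,j}=(-1)^{r-j}\Bigl[t^{\binom{r-j+1}{2}}{\qbinom{N-r-j}{r-j}}_t+t^{\binom{r-j}{2}}{\qbinom{N-r-j-1}{r-j-1}}_t\Bigr],
\]
are in fact the right ones; they are the paper's $F_j=G_j-G_{j+1}$. But deriving them from the conclusion proves nothing. What must be shown is that they invert your Pieri matrix:
\[
\sum_{j=b}^{r}c_{r,j}\,{\qbinom{N-2b}{j-b}}_t=\delta_{r,b}\qquad (0\le b\le r\le N/2).
\]
This is a genuine $t$-series identity, a $t$-analogue of the Chebyshev--Lucas inverse pair. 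You leave it as ``the main obstacle'', with only a plan to guess small cases and induct, and give no argument.

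The paper does not prove this inversion either. It imports the explicit expansion \eqref{eq:KRTW-rs} of $P_{(2^r,1^s)}(t)$ from \cite{KRTW_remarks}, after which the telescoping $F_j=G_j-G_{j+1}$ and the Abel summation are short checks. To finish along your route, you must either cite that expansion or actually prove the displayed inversion identity.
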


\begin{proof}
	By \cite[pg.~11]{KRTW_remarks} 
	\begin{align}\label{eq:KRTW-rs}
		P_{(2^r,1^s)}(t)
		=\sum_{i=0}^{r}(-1)^i t^{\binom{i}{2}}\,
		\frac{1-t^{\,2i+s}}{1-t^{\,i+s}}\,
		{\qbinom{i+s}{s}}_t\;e_{r-i}\,e_{r+s+i},
	\end{align} where if $i=s=0$, the corresponding summand is $e_r^2$.
	Re-index by $j=r-i$ gives
	\begin{align}\label{eq:KRTW-reindexed-rs}
		P_{(2^r,1^s)}(t)
		=\sum_{j=0}^{r}(-1)^{r-j} t^{\binom{r-j}{2}}\,
		\frac{1-t^{\,N-2j}}{1-t^{\,s+r-j}}\,
		{\qbinom{s+r-j}{r-j}}_t\;e_{j}\,e_{N-j}.
	\end{align}

	From \eqref{eq:blam1nprinc}
	\[
	b_{(2^r1^s),(1^N)}(q,t)
	=(q;q)_N\Bigl(\omega P_{(2^r,1^s)}(t)\Bigr)\!\left[\frac{1}{1-q}\right].
	\]
	We know \cite[Ch. I.2 Ex. 4]{MacMainBook}, $$h_n\!\left[\frac{1}{1-q}\right]=\frac{1}{(q;q)_n}.$$ Thus,
	\begin{align}\label{eq:raw-formula-rs}
		b_{(2^r1^s),(1^N)}(q,t)
		=\sum_{j=0}^{r}(-1)^{r-j} t^{\binom{r-j}{2}}\,
		\frac{1-t^{\,N-2j}}{1-t^{\,s+r-j}}\,
		{\qbinom{s+r-j}{r-j}}_t\,
		{\qbinom{N}{j}}_q.
	\end{align}
	Denote the coefficients in \eqref{eq:KRTW-reindexed-rs} by $F_j(t)$, i.e.,
	$$ F_j(t) := (-1)^{r-j} t^{\binom{r-j}{2}}\,
	\frac{1-t^{\,N-2j}}{1-t^{\,s+r-j}}\,
	{\qbinom{s+r-j}{r-j}}_t.$$
	
	\noindent	
	Define
	\begin{align}\label{eq:Gj-rs}
		G_j(t):=(-1)^{r-j}\,t^{\binom{r-j+1}{2}}\,
		{\qbinom{s+r-j}{r-j}}_t ,
		\qquad\text{for}\qquad 0\le j\le r,
	\end{align} and $G_{r+1}(t):=0$.
	We claim that
	\begin{align}\label{eq:telescope-rs}
		F_j(t)=G_j(t)-G_{j+1}(t)
		\qquad\text{for }\qquad0\le j\le r.
	\end{align}
	
	To prove \eqref{eq:telescope-rs}, divide both sides by $(-1)^{r-j}t^{\binom{r-j}{2}}{\qbinom{s+r-j-1}{r-j-1}}_t$ (valid for $j<r$; the case $j=r$ is immediate since $F_r=1=G_r$).  Using
	\[
	\frac{{\qbinom{s+r-j}{r-j}}_t}{{\qbinom{s+r-j-1}{r-j-1}}_t}
	=\frac{1-t^{\,s+r-j}}{1-t^{\,r-j}},
	\]
	the left--hand side of \eqref{eq:telescope-rs} becomes
	\[
	\frac{1-t^{\,N-2j}}{1-t^{\,s+r-j}}\cdot\frac{1-t^{\,s+r-j}}{1-t^{\,r-j}}
	=\frac{1-t^{\,N-2j}}{1-t^{\,r-j}} .
	\]
	The right--hand side becomes
	\[
	t^{\,r-j}\cdot\frac{1-t^{\,s+r-j}}{1-t^{\,r-j}}+1
	=\frac{t^{\,r-j}-t^{N-2j}+1-t^{\,r-j}}{1-t^{\,r-j}}
	=\frac{1-t^{\,N-2j}}{1-t^{\,r-j}}.
	\]
	This proves \eqref{eq:telescope-rs}.
	
	\noindent		
	Inserting \eqref{eq:telescope-rs} into  \eqref{eq:raw-formula-rs} gives
	\begin{align*}
		b_{(2^r1^s),(1^N)}(q,t)
		&=\sum_{j=0}^{r}\bigl(G_j(t)-G_{j+1}(t)\bigr)\,{\qbinom{N}{j}}_q 
		\\
		&=\sum_{j=0}^{r}G_j(t)\,{\qbinom{N}{j}}_q
		-\sum_{j=1}^{r+1}G_{j}(t)\,{\qbinom{N}{j-1}}_q .
	\end{align*}
	Since $G_{r+1}(t)=0$, we get
	\[
	b_{(2^r1^s),(1^N)}(q,t)
	=\sum_{j=0}^{r}G_j(t)\left({\qbinom{N}{j}}_q-{\qbinom{N}{j-1}}_q\right).
	\]
	Substituting the definition \eqref{eq:Gj-rs} of $G_j(t)$ yields the claimed formula.
\end{proof}
	
	Note that $\binom{r-j+2}{2} - \binom{r-j+1}{2} = r-j+1$. Pairing consecutive terms from the top, with $j \equiv r \ (\mathrm{mod}\ 2)$ and $m := r-j+1$, we are led to consider
	$$B_j:=D^{(N)}_j(q)\,{\qbinom{s+m-1}{m-1}}_t-t^m\,D^{(N)}_{j-1}(q)\,{\qbinom{s+m}{m}}_t.$$
	To prove $b_{(2^r1^s),(1^N)}(t+1,t) \in \ZZ_{\geq 0}[t]$, we will show that each $B_j|_{q=t+1} \in \ZZ_{\geq 0}[t]$. Note that if $r$ is odd, all terms of \autoref{lem:twocolumn} are paired, and if $r$ is even, the unpaired $j=0$ term is $t^{\binom{r+1}{2}} D_0^{(N)}(q) \qbinom{r+s}{r}_t = t^{\binom{r+1}{2}} \qbinom{r+s}{r}_t \in \ZZ_{\geq 0}[t].$
	
	We can write
	\begin{align*}
		B_j=t^m{\qbinom{s+m}{m}}_t{\qbinom{N}{j-2}}_q+E_j,
	\end{align*}
	where
	\begin{align*}
		E_j={\qbinom{s+m-1}{m-1}}_t{\qbinom{N}{j}}_q-\bigg({\qbinom{s+m-1}{m-1}}_t+t^m{\qbinom{s+m}{m}}_t\bigg){\qbinom{N}{j-1}}_q,
	\end{align*} and note that  \begin{align*}
	B_j-E_j =
		t^m{\qbinom{s+m}{m}}_t{\qbinom{N}{j-2}}_q \in \ZZ_{\geq 0}[q,t].
	\end{align*}
	Thus, to show that $B_j|_{q=t+1} \in \ZZ_{\geq 0}[t]$, it is enough to show that each $E_j|_{q=t+1} \in \ZZ_{\geq 0}[t]$.
	
	Recall from \eqref{eq:qcountingsubsets} that
	$${\qbinom{s+m-1}{m-1}}_t=\sum_{S\in\binom{[s+m-1]}{m-1}}t^{\sigma(S)}, \qquad \hbox{ where } \qquad  \sigma(S)=\sum_{s\in S}s-\binom{\#S+1}{2}.$$
	There is a bijection
	\begin{align*}
		\Bigl\{(S,e)\;\Big|\; S \in \binom{[s+m-1]}{m-1},\ \max S<e\le s+m\Bigr\} &\longleftrightarrow \binom{[s+m]}{m},\\
		(S,e) &\mapsto S\cup\{e\},
	\end{align*}	
	and $m+\sigma(S\cup\{e\})-\sigma(S)=e$. Therefore,
	\begin{align}\label{eq:Ej=sumTj}
		E_j=\sum_{S\in\binom{[s+m-1]}{m-1}}t^{\sigma(S)}\,T\big(N,j,\{M_S{+}1,\dots,s{+}m\}\big),
	\end{align} where 
	\begin{align}
		T(N,j,E):={\qbinom{N}{j}}_q-{\qbinom{N}{j-1}}_q\Big(1+\textstyle\sum_{e\in E}t^e\Big).
	\end{align} and $M_S = \max S$ (with the convention that $E = \{1,\ldots,s+m\}$ when $S$ is empty).
	
	To show that $E_j|_{q=t+1} \in \ZZ_{\geq 0}[t]$ it is enough to show that each summand in \eqref{eq:Ej=sumTj} is positive at $q=t+1$, which is the next lemma.
	
	\begin{lemma}\label{lem:Tpos}
		Suppose $N \geq 2j$ and $L \in \ZZ_{\geq 0}$ with $L \leq N-2j+1$. Then for any $E \subseteq [1,L]$, $T(N,j,E)|_{q=t+1} \in \ZZ_{\geq 0}[t]$.
	\end{lemma}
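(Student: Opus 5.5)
The plan has two steps: first reduce to the extremal case, then prove that case by induction on $N$ using the $q$-Pascal identity \eqref{eq:qPascal}.

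\emph{Reduction.} For any $E\subseteq[1,L]$,
\[
T(N,j,E)=T(N,j,[1,L])+{\qbinom{N}{j-1}}_q\sum_{e\in[1,L]\setminus E}t^e .
\]
The added term is in $\ZZ_{\geq0}[q,t]$, so it stays in $\ZZ_{\geq 0}[t]$ after setting $q=t+1$. Likewise, increasing $L$ only subtracts more such terms, so it suffices to treat $L=K+1$ with $K:=N-2j$, i.e.\ $E=[1,K+1]$. With $1+\sum_{e=1}^{K+1}t^e=[K+2]_t$, we must show
\[
\Phi(N,j):={\qbinom{N}{j}}_q-{\qbinom{N}{j-1}}_q\,[N-2j+2]_t\Big|_{q=t+1}\in\ZZ_{\ge0}[t]\qquad(N\ge 2j).
\]
Two sanity checks support this. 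At $t=0$ we get $\binom{N}{j}-\binom{N}{j-1}\ge0$. The case $j=0$ is trivial, and $j=1$ gives $[N]_{t+1}-[N]_t$, which is visibly positive since $(1+t)^i-t^i\in\ZZ_{\ge0}[t]$.

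\emph{Induction on $N$ for fixed $j$.} The base case is $N=2j$, where $\Phi(2j,j)={\qbinom{2j}{j}}_q-{\qbinom{2j}{j-1}}_q(1+t)$. Here I would use $1+t=q$ and show that ${\qbinom{2j}{j}}_q-q{\qbinom{2j}{j-1}}_q$ lies in $\ZZ_{\ge0}[q]$. This is the $q$-Catalan (Carlitz--Riordan) number ${\qbinom{2j}{j}}_q\frac{1-q}{1-q^{j+1}}$, which counts Dyck paths by area, hence is positive in $q$, and therefore positive at $q=t+1$.

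For the step, I apply \eqref{eq:qPascal} in the form ${\qbinom{N}{i}}_q={\qbinom{N-1}{i}}_q+q^{N-i}{\qbinom{N-1}{i-1}}_q$ to both $q$-binomials, and use $[K+2]_t=[K+1]_t+t^{K+1}$. The goal is to organize the result as
\[
\Phi(N,j)=\Phi(N-1,j)+q^{N-j}\Big({\qbinom{N-1}{j-1}}_q-{\qbinom{N-1}{j-2}}_q\,q\,[K+1]_t\Big)+\text{(remainder)},
\]
where I track the powers $q^{N-j}$ versus $q^{N-j+1}$ carefully. The middle bracket should be handled by comparing it with $\Phi(N-1,j-1)$, whose $t$-factor is $[K+3]_t$. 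The comparison uses $q\,[K+1]_t=(1+t)[K+1]_t\le[K+2]_t+t[K+1]_t$. The leftover terms involve ${\qbinom{N-1}{j-1}}_q\,t^{K+1}$ against $q^{N-j}$, and I would absorb them using $q^{a}-t^{b}=(1+t)^a-t^b\in\ZZ_{\ge0}[t]$ whenever $a\ge b$.

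\emph{Main obstacle.} I expect the main obstacle to be this bookkeeping: making every negative piece dominated termwise after substituting $q=t+1$. If the direct Pascal split does not close, I would instead strengthen the induction hypothesis. The stronger claim would be that ${\qbinom{N}{j}}_q-{\qbinom{N}{j-1}}_q\,[N-2j+1]_t\,q$ is positive at $q=t+1$ (note $q[K+1]_t\ge[K+2]_t$ coefficientwise there). I would prove this via a Dyck-path/ballot-path model: ${\qbinom{N}{j}}_q-{\qbinom{N}{j-1}}_q$ counts ballot paths by area via the reflection principle, and I would inject copies of ${\qbinom{N}{j-1}}_q\,t^e$ for $e\le K+1$ into this set, keyed by the height at which the path ends.
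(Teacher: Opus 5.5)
Your reduction to $E=[1,K+1]$ with $K=N-2j$ is correct. Your base case $N=2j$ is also correct. One small correction there: ${\qbinom{2j}{j}}_q-q{\qbinom{2j}{j-1}}_q=\frac{1}{[j+1]_q}{\qbinom{2j}{j}}_q$ is MacMahon's $q$-Catalan number, which counts Dyck paths by major index, not the Carlitz--Riordan area version. Its positivity still holds.

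The gap is the inductive step, which you never carry out and which does not close as sketched. Doing the Pascal split exactly, with everything at $q=t+1$, gives
\[
\Phi(N,j)=\Phi(N-1,j)+q^{N-j}\Bigl({\qbinom{N-1}{j-1}}_q-q\,[K+2]_t{\qbinom{N-1}{j-2}}_q\Bigr)-t^{K+1}{\qbinom{N-1}{j-1}}_q .
\]
Since $q[K+2]_t=[K+3]_t+t[K+1]_t$ at $q=t+1$, the bracket equals $\Phi(N-1,j-1)-t[K+1]_t{\qbinom{N-1}{j-2}}_q$. You are then left with the negative terms $-q^{N-j}t[K+1]_t{\qbinom{N-1}{j-2}}_q-t^{K+1}{\qbinom{N-1}{j-1}}_q$. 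A hypothesis that only says $\Phi(N-1,j)$ and $\Phi(N-1,j-1)$ are nonnegative gives no slack to absorb these. Your device $q^a-t^b\in\ZZ_{\ge0}[t]$ needs a spare positive $q^a$-term, and none is left over. Your fallback strengthening is true, but the ballot-path injection that would prove it is only announced, not constructed.

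The missing idea is already in your base case, where you used $[2]_t=q$. Push it to the full exponent: at $q=t+1$, every coefficient of $(1+t)^{K+1}$ in degrees $0,\dots,K+1$ is $\binom{K+1}{e}\ge 1$. Hence $q^{K+1}-A_E(t)\in\ZZ_{\ge0}[t]$ for every $E\subseteq[1,K+1]$, where $A_E(t)=1+\sum_{e\in E}t^e$. Now write
\[
T(N,j,E)=\Bigl({\qbinom{N}{j}}_q-q^{K+1}{\qbinom{N}{j-1}}_q\Bigr)+\bigl(q^{K+1}-A_E(t)\bigr){\qbinom{N}{j-1}}_q .
\]
The second summand is positive at $q=t+1$. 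The first summand equals $\frac{1-q^{K+1}}{1-q^{N-j+1}}{\qbinom{N}{j}}_q$. By the $q$-hook length formula this is $q^{-j}$ times the major-index generating function of standard tableaux of shape $(N-j,j)$, so it lies in $\ZZ_{\ge0}[q]$. No induction is needed, and this is exactly the paper's proof; your base case is its $K=0$ instance. The same decomposition also gives your strengthened claim, since $(1+t)^{K+1}$ dominates $(1+t)[K+1]_t$ coefficientwise.
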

	
	\begin{proof}
		Put $A_E(t) = 1 + \sum_{e \in E}^{} t^e$. Then
		\begin{align*}
			T(N,j,E) = \bigg({\qbinom{N}{j}}_q - q^{N-2j+1}{\qbinom{N}{j-1}}_q\bigg) + \big(q^{N-2j+1}-A_E(t)\big){\qbinom{N}{j-1}}_q.
		\end{align*}
		
		Since $E \subseteq [1,L]$, and $N-2j+1 \geq L$, $(1+t)^{N-2j+1} - A_E(t) \in \ZZ_{\geq 0}[t]$. Thus the second term is in $\ZZ_{\geq0}[t]$ at $q=t+1$. The first term is
		\begin{align*}
			C_{N,j}(q) := {\qbinom{N}{j}}_q - q^{N-2j+1}{\qbinom{N}{j-1}}_q = \dfrac{1-q^{N-2j+1}}{1-q^{N-j+1}} {\qbinom{N}{j}}_q
			= \dfrac{[M-j+1]_q}{[M+1]_q} {\qbinom{M+j}{j}}_q,
		\end{align*}
		where $M = N-j \geq j$. 
		
		For a tableau $T$ with content $(1^N)$, the descent set $\mathrm{Des}(T)$ is the set of $i$ such that $i+1$ appears in a row strictly below $i$ in $T$, and $\maj(T) = \sum_{i \in \mathrm{Des}(T)} i$. The $q$-hook formula \cite[Corollary 7.21.5]{EC2} gives
		\begin{align*}
			\sum_{T \in \SSYT((M,j),(1^{M+j}))}^{} q^{\maj(T)} = q^j \dfrac{(q;q)_{M+j}}{(q;q)_j (q;q)_{M-j} (q^{M-j+2};q)_j} = q^j \dfrac{[M-j+1]_q}{[M+1]_q} {\qbinom{M+j}{j}}_q,
		\end{align*} where $\maj$ is the major index statistic on tableaux. Note that the minimum possible $\maj$ in this case is $j$, for the tableau 
		\begin{align*}
			T_{\min} = 
			\ytableausetup{boxsize=3em}
			\begin{ytableau}
				1 & 2 & \cdots & j & 2j+1 & \cdots & M+j \\
				j+1 & j+2 & \cdots & 2j
			\end{ytableau}
		\end{align*}  This implies $C_{N,j}(q) \in \ZZ_{\geq 0}[q]$, and hence $C_{N,j}(t+1) \in \ZZ_{\geq 0}[t]$.
	\end{proof}

Thus we proved the following theorem.	
	\begin{theorem}\label{thm:twocol}
		Let $\lambda,\mu \in \Par$ with $\lambda_1 \leq 2$. Then $b_{\lambda,\mu}(t+1,t) \in \ZZ_{\geq 0}[t]$.
	\end{theorem}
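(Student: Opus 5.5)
The plan is to assemble the pieces of this section; the only work left is bookkeeping. First, reduce to a single column. A partition $\lambda$ with $\lambda_1 \leq 2$ has the form $(2^r1^s)$. By triangularity (\autoref{prop:triangularity}) we may assume $\mu \leq \lambda$, so $\mu_1 \leq 2$ as well, and $\mu=(2^m1^n)$ with $m\le r$. Applying part (2) of \autoref{prop:reductionprinc} splits off the first column when the first column lengths agree, and part (1) splits off the common rows of length $2$. This gives
\[
b_{(2^r1^s),(2^m1^n)}(q,t)=b_{(2^{r-m}1^s),(1^{N})}(q,t),
\]
with $N=2(r-m)+s$. Renaming, it suffices to show $b_{(2^r1^s),(1^N)}(t+1,t)\in\ZZ_{\ge0}[t]$ for $N=2r+s$.

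Next, expand using \autoref{lem:twocolumn}, which writes this coefficient as an alternating sum. Pair the term with index $j\equiv r \pmod 2$ with the term of index $j-1$, and set $m=r-j+1$. Since $\binom{r-j+2}{2}-\binom{r-j+1}{2}=m$, each pair equals $t^{\binom{r-j+1}{2}}B_j$. When $r$ is even, one term is left unpaired, namely the $j=0$ term $t^{\binom{r+1}{2}}\qbinom{r+s}{r}_t$, which is already positive.

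It then remains to show $B_j|_{q=t+1}\in\ZZ_{\ge0}[t]$, in three steps.
\begin{enumerate}
\item Split $B_j$ as the manifestly positive term $t^m\qbinom{s+m}{m}_t\qbinom{N}{j-2}_q$ plus $E_j$.
\item Expand $E_j$ by \eqref{eq:Ej=sumTj} as a $\ZZ_{\ge0}[t]$-combination of the quantities $T(N,j,E)$, where $E\subseteq[1,s+m]$.
\item Apply \autoref{lem:Tpos} with $L=s+m$ to each $T(N,j,E)$.
\end{enumerate}

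The main point to verify is the hypothesis of \autoref{lem:Tpos}. We need $N\ge 2j$, which holds because $j\le r$ and $N=2r+s$. We also need $L=s+m\le N-2j+1$. Since $N-2j+1=2r+s-2j+1$, this inequality is $s+r-j+1\le s+2(r-j)+1$, i.e. $r-j\ge0$, which holds.

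Finally, the pairing index satisfies $j\ge1$ (as $j-1\ge0$), so every $B_j$ that appears is covered by the argument. Summing all the positive contributions proves \autoref{thm:twocol}.
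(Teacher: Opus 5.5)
Your proposal is correct and follows the paper's proof essentially step for step: reduce to $\mu=(1^N)$ via \autoref{prop:reductionprinc}, pair the terms $j$ and $j-1$ of \autoref{lem:twocolumn}, split $B_j$ into the manifestly positive part plus $E_j$, and apply \autoref{lem:Tpos} with $L=s+m$ after checking $L\le N-2j+1$. The appeal to part (2) of \autoref{prop:reductionprinc} is superfluous, since the first column lengths $r+s$ and $m+n$ coincide only when $\lambda=\mu$, and part (1) with $k$ equal to the number of $2$'s in $\mu$ already gives the reduction.
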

	
	\begin{proof}
		By the reduction at the beginning of this section, we may assume $\lambda = (2^r1^s)$ and $\mu = (1^N)$ with $N = 2r+s$. Write the formula of \autoref{lem:twocolumn} as
		\begin{align*}
			b_{(2^r1^s),(1^N)}(q,t)=\sum_{j=0}^r(-1)^{r-j}\,t^{\binom{r-j+1}{2}}\,D^{(N)}_j(q)\,{\qbinom{r+s-j}{r-j}}_t .
		\end{align*}
		Pair the terms $j$ and $j-1$ for $j = r, r-2, r-4, \ldots$. The pair indexed by $j$ equals $t^{\binom{r-j+1}{2}} B_j$. By the discussion above, $B_j = t^m \qbinom{s+m}{m}_t \qbinom{N}{j-2}_q + E_j$, where the first term is in $\ZZ_{\geq0}[q,t]$ and, at $q = t+1$,
		\begin{align*}
			E_j|_{q=t+1} = \sum_{S \in \binom{[s+m-1]}{m-1}} t^{\sigma(S)} T\big(N,j,\{M_S+1,\ldots,s+m\}\big)\big|_{q=t+1} \in \ZZ_{\geq 0}[t]
		\end{align*}
		by \autoref{lem:Tpos} applied with $L = s+m$ (note $N - 2j + 1 = 2r+s-2j+1 \geq r-j+1+s = L$ since $j \leq r$). If $r$ is odd, all terms are paired and we are done. If $r$ is even, the unpaired $j=0$ term is $t^{\binom{r+1}{2}} D_0^{(N)}(q) \qbinom{r+s}{r}_t = t^{\binom{r+1}{2}} \qbinom{r+s}{r}_t \in \ZZ_{\geq 0}[t]$, completing the proof.
	\end{proof}

	%%%%%%%%%%%%%%%%%%%%%%%%%%%%%%%%%%%%%%%%%%%%%%%%%%
	\section{Some small cases}\label{sec:smallcases}
	
	In this section we record some coefficients $b_{\lambda,\mu}(q,t)$ for small shapes. All of them can be computed by using the monomial expansion formula and the triangularity of $P$ to peel off one coefficient at a time.
	
	By \autoref{rem:stabilityfinitecheck}, infinite families in the stable range $\mu_1 > \lambda_2$ reduce to a single computation. For instance, item (1) of \autoref{prop:smallcases} below for all $n \geq k+2$ is the single computation $b_{(k+2,k),(k+1,k,1)}(q,t) = (q-t)^2 + (q-t)$, and similarly for the other items. We skip the proofs which can be obtained as outlined above, the main purpose of these examples is to motivate the discussion on the possibility of term-by-term weights.

	\begin{proposition}\label{prop:smallcases}
		\begin{enumerate}
			\item 	For $n \geq k+2$, $k \geq 2$, \[ b_{(n,k),(n-1,k,1)}(q,t) = (q-t)^2+(q-t).\]
			
			\item 	For $n \geq k +2$, $k \geq 3$, \[ b_{(n,k,1),(n-1,k,1,1)}(q,t) = q(q-t)^2+(q-t)^2 + q(q-t)+ (q-t). \]
		\end{enumerate}
	\end{proposition}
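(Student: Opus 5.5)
Both items lie in the stable range of \autoref{prop:stability}. In (1) we have $\mu_1 = n-1 > k = \lambda_2$; in (2) we have $\mu_1 = n-1 > k = \lambda_2$ as well. By \autoref{rem:stabilityfinitecheck} it therefore suffices to treat the smallest admissible first part, $n = k+2$. Item (1) thus reduces to computing $b_{(k+2,k),(k+1,k,1)}(q,t)$, and item (2) to computing $b_{(k+2,k,1),(k+1,k,1,1)}(q,t)$.

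These still depend on $k$, so the next step removes that dependence with \autoref{prop:reductionprinc}.

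For (1), view $\lambda=(k+2,k)$ and $\mu=(k+1,k,1)$ inside a $k+2$ by $3$ rectangle, or better, remove common columns. Both $\lambda'$ and $\mu'$ begin with $k-1$ parts equal to $2$ or more. Concretely, $\lambda' = (2^k,1^2)$ and $\mu' = (3,2^{k-1},1)$, so the first column sums are $2$ versus $3$ and do not agree. Instead we use the complement in the rectangle $((k+2)^3)$: $\lambda^c = (k+2,2,0)$, reversed, and $\mu^c = (k+1,2,1)$. Then the stability of \autoref{prop:stability} (now $\mu_1 = k+1 > 2$) shrinks the first row down to $b_{(3,2),(2,2,1)}(q,t)$, which is independent of $k$. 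For (2) we do the same with the rectangle $((k+2)^4)$: $\lambda^c = (k+2,k+1,2)$ and $\mu^c = (k+1,k+1,1,\dots)$. I would check carefully whether the complements land in a stable, $k$-free pair, possibly alternating the complement and stability operations. If this bookkeeping fails, the fallback is a direct argument.

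The fallback is to compute via the constant-term formula \eqref{eq:bconstterm}, or by peeling. Expand $P_\lambda(t)$ monomially using Macdonald's $\psi_T(t)$. Only the few $\nu$ with $\mu \le \nu \le \lambda$ matter: for (1) these are $\nu\in\{(n,k),(n,k-1,1),(n-1,k+1),(n-1,k,1)\}$, restricted to those that are partitions and dominated by $\lambda$. For each such $\nu$, use $\langle P_\lambda(t), Q'_\mu(q)\rangle$ together with the monomial coefficients of $P_\nu(q)$ and triangularity, solving from the top down. Each step only involves tableaux with at most one or two entries outside the "1 in row 1, 2 in row 2" pattern, so the $\psi_T$ factors are of the form $1-u$ or $1-u^2$. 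This gives $b_{\lambda,\mu}$ as an explicit small polynomial, which should factor as claimed: $(q-t)(q-t+1)$ for (1), and $(q+1)(q-t)(q-t+1)$ for (2).

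The main obstacle is the final reduction to a $k$-independent finite computation. Stability only trims the first row, so the $k$-dependence in the second row has to be handled by the rectangle complement (part (3) of \autoref{prop:reductionprinc}) combined with stability. I expect that the hypotheses $k\ge 2$ and $k\ge 3$ are exactly what ensure that $\mu_1>\lambda_2$ persists after complementation. If instead the peeling fallback is used, it works for general $k$ directly, since the relevant tableaux counts do not grow with $k$.
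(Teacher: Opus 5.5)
Your first step, stability down to $n=k+2$, is fine and is the paper's own reduction. The step that removes the $k$-dependence fails as written.

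\textbf{Item (1).} Proposition~\ref{prop:stability} needs $\mu_1>\lambda_2$ for the \emph{smaller} pair. The pair $(3,2),(2,2,1)$ has $\mu_1=\lambda_2=2$. The admissible shift from Remark~\ref{rem:stabilityfinitecheck} is $r=\mu_1-\max(\lambda_2+1,\mu_2)=k-2$, so $(k+2,2),(k+1,2,1)$ only reduces to $(4,2),(3,2,1)$. This is not a cosmetic slip. Complementing in $(3^3)$ and using Example~\ref{eg:hook31} gives $b_{(3,2),(2,2,1)}(q,t)=b_{(3,1),(2,1,1)}(q,t)=(q+1)(q-t)$. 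At $q=1$ this is $2(1-t)\neq(1-t)(2-t)$, so your endpoint produces the wrong polynomial.

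With the endpoint corrected, your complement-plus-stability argument does collapse all of (1) to the single instance $b_{(4,2),(3,2,1)}$. That is sharper than the paper's sketch, whose ``single computation'' $b_{(k+2,k),(k+1,k,1)}$ still depends on $k$. The remaining value is supplied by Theorem~\ref{thm:nk1}. In fact (1) itself, after replacing $n$ by $n-1$, is exactly the coefficient of $Q'_{(n+1,k)}(t)$ there.

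\textbf{Item (2).} The complement in $((k+2)^4)$ is $\lambda^c=(k+2,k+1,2)$, $\mu^c=(k+1,k+1,2,1)$, not $(k+1,k+1,1,\dots)$. So $\mu^c_1=\lambda^c_2$ and stability is unavailable. No proper initial row or column sums agree for either pair. Enlarging the rectangle only adds full rows or columns, which Proposition~\ref{prop:reductionprinc} strips off again. So the hypothesis $k\ge3$ has nothing to do with stability surviving complementation.

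The $k$-independence has to come from the direct peeling computation. That is the route the paper indicates and your fallback names, but you never carry it out. Concretely:
\begin{itemize}
\item The dominance interval $[\mu,\lambda]$ is $\{(n,k,1),(n,k-1,2),(n,k-1,1,1),(n-1,k+1,1),(n-1,k,2),(n-1,k,1,1)\}$.
\item You must check that the tableaux and weights $\psi_T$ entering the monomial coefficients among these six shapes are the same for all $k\ge3$ and $n\ge k+2$.
\item You must then actually solve the triangular system.
\end{itemize}

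That uniformity genuinely breaks at $k=2$. First, $(n,1,2)$ is not a partition. Second, take the tableau with rows $(1^{n-1},2)$, $(2,4)$, $(3)$. Its box $(2,1)$ has the third-row box in its leg, so it contributes $1-t^2$ instead of $1-t$. This already changes $b_{\lambda,\mu}(1,t)$, in line with the remark after the proposition.

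The statement is exactly the output of this computation. Saying it ``should factor as claimed'' therefore leaves (2) unproved.
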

	
	\begin{remark}
		The hypothesis $k \geq 3$ in item (2) is necessary: for $k=2$ one has $b_{(n,2,1),(n-1,2,1,1)}(q,t) = (q-t)(q^2+2q-t^2-t+1)$, which does not factor as in item (2).
	\end{remark}
	
	\begin{remark}[Term-by-term weights]\label{rem:termbiterm}
		The formulas of \autoref{prop:smallcases} are sums over tableaux, term by term, with the box weight of \autoref{thm:main}: for each $T \in \SSYT(\lambda,\mu)$, with
		\begin{align*}
			\psi_T(q,t) = \prod_{\substack{x \in \lambda,\, \coarm(x) > 0 \\ T(\Left(x)) < T(x)}} \big(q^{\ch_T(x)} - t\big) \prod_{\text{other } x} q^{\ch_T(x)},
		\end{align*}
		one has $b_{\lambda,\mu}(q,t) = \sum_{T} \psi_T(q,t)$, and moreover each tableau satisfies $\psi_T(q,0) = q^{\ch(T)}$ and $\psi_T(1,t) = \psi_T(t)$ individually. For instance, in item (1) the two tableaux in $\SSYT((k+2,k),(k+1,k,1))$ contribute $(q-t)^2$ and $(q-t)$, and in item (2) the four tableaux in $\SSYT((k+2,k,1),(k+1,k,1,1))$ contribute $q(q-t)^2$, $(q-t)^2$, $q(q-t)$ and $(q-t)$. This raises the question whether $\psi_T$ can be generalized to give a tableaux formula for $b_{\lambda,\mu}(q,t)$ for general shapes.
	\end{remark}
	
	By \autoref{Bdef}, one can compute $b_{\lambda,\mu}(q,t)$ as transition coefficients when expanding $Q'_\mu(q)$ in the $Q'(t)$-basis. In terms of $Q'$ one can show, using \autoref{prop:stability} and \autoref{prop:reductionprinc},
	\begin{proposition}\label{prop:Qpexpansions}
		\begin{enumerate}
			\item For $n \geq  4$,
			\begin{align*}
				Q'_{(n-2,1^2)}(q) &= (q^2-t)(q-t)Q'_{(n)}(t) + (q-t)(q+1)Q'_{(n-1,1)}(t)
				\\
				&\qquad + (q-t)Q'_{(n-2,2)}(t) + Q'_{(n-2,1^2)}(t).
			\end{align*}
			
			\item For $n \geq 6$,
			\begin{align*}
				Q'_{(n-3,1^3)}(q) &= (q^3-t)(q^2-t)(q-t)Q'_{(n)}(t) + (q^2+q+1)(q^2-t)(q-t)Q'_{(n-1,1)}(t)
				\\
				&\qquad+ (q^2+q+1)(q-t)^2 Q'_{(n-2,2)}(t) + (q^2+q+1)(q-t)Q'_{(n-2,1,1)}(t)
				\\
				&\qquad+ (q^2-t)(q-t) Q'_{(n-3,3)}(t) + (q+t+1)(q-t)Q'_{(n-3,2,1)}(t)\\
				&\qquad+ Q'_{(n-3,1^3)}(t).
			\end{align*}
			
			\item For $n \geq 3$,
			\begin{align*}
				Q'_{(n,2,1)}(q) &= q(q^2-t)(q-t) Q'_{(n+3)}(t) + (q^2+q-t)(q-t)Q'_{(n+2,1)}(t)
				\\
				&\qquad+ (q-t+1)(q-t) Q'_{(n+1,2)}(t) + (q-t) Q'_{(n+1,1,1)}(t)
				\\
				&\qquad+ (q-t)Q'_{(n,3)}(t) + Q'_{(n,2,1)}(t).
			\end{align*}
			
		\end{enumerate}
	\end{proposition}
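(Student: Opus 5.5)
The plan is to compute each expansion column by column in the $Q'(t)$-basis. By \autoref{Bdef}(2), the coefficient of $Q'_\lambda(t)$ in $Q'_\mu(q)$ is $b_{\lambda,\mu}(q,t)$. By \autoref{prop:triangularity}, only $\lambda \geq \mu$ occur, and the diagonal coefficient is $1$. So for each item I would list the partitions $\lambda$ dominating $\mu$ and identify every $b_{\lambda,\mu}(q,t)$.

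For items (1) and (2), $\mu = (n-2,1^2)$ or $(n-3,1^3)$, and the dominating $\lambda$ are exactly those displayed: every $\lambda \geq \mu$ has $\lambda_1 \geq n-3$, and the terms listed exhaust them once $n$ is large.
\begin{itemize}
\item For $\lambda = (n)$, \autoref{th:b_singlerow} gives $q^{n(\mu)-\binom{\ell}{2}}\prod_{i=1}^{\ell-1}(q^i-t)$. Since $n(\mu) = \binom{\ell}{2}$ for $\mu = (n-j,1^j)$, this is $\prod_{i=1}^{j}(q^i-t)$, matching the leading terms.
\item For the other $\lambda$, I would use \autoref{prop:stability}. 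Here $\mu_1 = n-j > \lambda_2$ holds for $n \geq 2j$ (and $n \geq 4$, $n \geq 6$ are exactly the hypotheses). This lets me subtract $r$ from the first parts and reduce to a pair with small first part.
\item The reduced pairs then fall into three cases. Some are handled by \autoref{prop:reductionprinc}(1): for example $b_{(n-2,2),(n-2,1,1)} = b_{(2),(1,1)} = q-t$. Some become hook-versus-column coefficients $b_{(a,1^{b}),(1^{a+b})}$, given by \autoref{prop:Cnk}: for example $(n-1,1)$ against $(n-2,1,1)$ reduces to $b_{(2,1),(1^3)} = C_{3,1} = (q-t)(1+q+t) \cdot$ (the appropriate normalization). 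The remaining cases are hook $\lambda$, covered by \autoref{thm:main} directly.
\end{itemize}

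For $(n-3,3)$ against $(n-3,1^3)$, I would apply \autoref{prop:reductionprinc}(1) with $k=1$ to get $b_{(3),(1^3)} = (q^2-t)(q-t)$. For $(n-3,2,1)$, I would get $b_{(2,1),(1^3)} = (q-t)(q+t+1)$ from \autoref{prop:Cnk}. Terms such as $(n-1,1)$ reduce via stability to $b_{(3,1),(2,1,1)}$-type or $b_{(4,1),(3,1,1)}$-type hook coefficients, which I would read off from \autoref{thm:main}, as in \autoref{eg:hook31}.

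For item (3), $\mu = (n,2,1)$ with $n \geq 3$, so $\mu_1 = n > \lambda_2$ for every $\lambda$ in the list except possibly $(n,3)$.
\begin{itemize}
\item For $\lambda = (n,3)$, \autoref{prop:reductionprinc}(1) with $k=1$ gives $b_{(3),(2,1)} = q-t$.
\item For the other $\lambda$, \autoref{prop:stability} reduces to $\mu = (3,2,1)$ against $\lambda = (6),(5,1),(4,2),(4,1,1)$. The single-row case $(6)$ is \autoref{th:b_singlerow}: $q^{n(\mu)-3}(q-t)(q^2-t) = q(q-t)(q^2-t)$. The hooks $(5,1)$ and $(4,1,1)$ come from \autoref{thm:main} by listing the few tableaux of content $(3,2,1)$.
\end{itemize}

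The one genuinely non-hook, non-reducible entry is $b_{(4,2),(3,2,1)} = (q-t+1)(q-t)$. I expect this to be the main obstacle, and I would compute it by brute force. One route is the constant-term formula \eqref{eq:bconstterm} on three variables. The alternative is to expand $P_{(4,2)}(t)$ monomially via Macdonald's formula, subtract the already-known $b\,P(q)$ contributions, and peel off the coefficient. Both are finite computations.

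Finally I would double-check item (3) at $t=0$ against the Kostka numbers, and at $q=1$ against Macdonald's formula.
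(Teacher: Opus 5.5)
Your strategy is essentially the paper's. The paper only says these expansions follow from \autoref{prop:stability}, \autoref{prop:reductionprinc} and a finite computation (peeling off coefficients via the monomial expansion). Your extra use of \autoref{th:b_singlerow}, \autoref{thm:main} and \autoref{prop:Cnk} correctly handles every coefficient in items (1) and (3).

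\textbf{Gap in item (2).} Your case analysis misses the coefficient $(q^2+q+1)(q-t)^2$ of $Q'_{(n-2,2)}(t)$; it fits none of your three cases. The shape $(n-2,2)$ is not a hook, and it is not a hook-versus-column pair. Parts (1) and (2) of \autoref{prop:reductionprinc} do not apply either: the partial sums of $(n-2,2)$ and $(n-3,1^3)$, and of their conjugates $(2,2,1^{n-4})$ and $(4,1^{n-4})$, never agree before the end. Stability is valid here because $n-3>2$, which is exactly where $n\ge 6$ is used. It only reduces the coefficient to $b_{(4,2),(3,1^3)}(q,t)$.

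So your claim that $b_{(4,2),(3,2,1)}$ is the one irreducible entry is false. There is a second one, and it needs its own finite computation. For example, compare coefficients of $m_{(3,1^3)}$ on both sides of $P_{(4,2)}(t)=\sum_\nu b_{(4,2),\nu}(q,t)P_\nu(q)$. Use $b_{(4,2),(4,1,1)}=b_{(4,2),(3,3)}=q-t$ from the reduction principles, $b_{(4,2),(3,2,1)}=(q-t)(q-t+1)$ from your item (3) computation, and the monomial coefficients from Macdonald's formula. This gives
\[
3(1-t)^2=3(1-q)^2+3(q-t)(1-q)+(q-t)(1-q)^2+(q-t)(q-t+1)(1-q)(2+q)+b_{(4,2),(3,1^3)}(q,t),
\]
and hence $b_{(4,2),(3,1^3)}(q,t)=(q^2+q+1)(q-t)^2$.

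\textbf{Invalid sample reduction.} You reduce $(n-1,1)$ against $(n-2,1,1)$ to $b_{(2,1),(1^3)}$, but \autoref{prop:stability} needs $\mu_1>\lambda_2$ for the smaller pair. So the reduction stops at $b_{(3,1),(2,1,1)}$. In fact $b_{(2,1),(1^3)}=(q-t)(q+t+1)\neq(q-t)(q+1)=b_{(3,1),(2,1,1)}$, and no normalization repairs this. Since $(n-1,1)$ is a hook, apply \autoref{thm:main} directly, as you yourself do a few lines later.
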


	\begin{remark}\label{rem:Qptermbiterm}
		The coefficients in \autoref{prop:Qpexpansions} are also tableaux sums term by term. In items (1) and (3), the plain box weight of \autoref{rem:termbiterm} works for every $\lambda$. In item (2), the box weight works for every $\lambda$ except two coefficients: for $\lambda = (n-3,2,1)$ and $\lambda = (n-2,2)$. For $\lambda = (n-3,2,1)$ we can set the weights as 
		\ytableausetup{smalltableaux}
		\begin{align*}
			\psi_{\ytableaushort{11\cdots 1,23,4}}(q,t) = q(q-t), \qquad
			\psi_{\ytableaushort{11\cdots 1,24,3}}(q,t) = (q-t)(1+t),
		\end{align*} this is consistent with our hook tableaux formula \eqref{eq:psiqtdef} after disregarding the top row of all $1$. For $\lambda = (n-2,2)$ we can set the weights to be
		\begin{align*}
			\psi_{\ytableaushort{11\cdots 13,24}}(q,t) = (q-t)^2, 
			\\
			\psi_{\ytableaushort{11\cdots 14,23}}(q,t) = q(q-t)^2, 
			\\
			\psi_{\ytableaushort{11\cdots 12,34}}(q,t) = q^2(q-t)^2.
		\end{align*} The three tableaux, of charges $2,3,4$, contribute $q^{i}(q-t)^2$ for $i = 0,1,2$ respectively. We do not know a uniform weight rule that covers all of these cases simultaneously.
	\end{remark}
	
	Item (3) is the case $k=2$ of \autoref{thm:nk1}.
	
	\section{The expansion of \texorpdfstring{$Q'_{(n,k,1)}(q)$}{Q'(n,k,1)(q)}}\label{sec:nk1}
	
	In this section we prove a formula for the transition coefficients $b_{\lambda,(n,k,1)}(q,t)$ in the stable range. The proof is by using the Kostant partition formula \eqref{eq:Q'KostantPartitionexp}. The formula generalizes item (3) of \autoref{prop:Qpexpansions}, which is the case $k=2$.
	
	Note that the expansion of $Q'_{\mu}(q)$ in terms of $Q'_\lambda(t)$ is done for the case when $\mu$ has at most two rows, since if $\mu$ is 2-row and $\lambda \geq \mu$ then $\lambda$ also has at most two rows, and by using \autoref{prop:reductionprinc} we can reduce it to $\lambda$ being a single row. So the first non-trivial case is when $\mu = (n,k,1)$, which is the subject of the next theorem.
	
	\begin{theorem}\label{thm:nk1}
		Let $n \geq k+1 \geq 3$ and $N = n+k+1$. Then
		\begin{align*}
			Q'_{(n,k,1)}(q) &= q^{k-1}(q^2-t)(q-t)\, Q'_{(N)}(t) \;+\; q^{k-2}(q-t)(q^2+q-t)\, Q'_{(N-1,1)}(t)\\
			&\quad+ \sum_{j=2}^{k-1} q^{k-1-j}(q+1)(q-t)^2\, Q'_{(N-j,j)}(t) \;+\; (q-t)(q-t+1)\, Q'_{(n+1,k)}(t)\\
			&\quad+ (q-t)\, Q'_{(n,k+1)}(t) \;+\; \sum_{j=2}^{k} q^{k-j}(q-t)\, Q'_{(N-j,\,j-1,\,1)}(t) \;+\; Q'_{(n,k,1)}(t).
		\end{align*}
	\end{theorem}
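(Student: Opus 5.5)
We will use the Kostant partition expansion \eqref{eq:Q'KostantPartitionexp} with $m=3$, $\mu=(n,k,1)$:
\begin{align*}
Q'_{(n,k,1)}(q)=\sum_{\nu\in Q^3_+}\cP(\nu;q,t)\,Q'_{\mu+\nu}(t).
\end{align*}
Write $\nu=a(\varepsilon_1-\varepsilon_2)+b(\varepsilon_2-\varepsilon_3)$ with $a,b\ge 0$, so $\mu+\nu=(n+a,\,k-a+b,\,1-b)$. Kostant partitions of $\nu$ are indexed by $c=c_{\varepsilon_1-\varepsilon_3}\in[0,\min(a,b)]$, with $c_{12}=a-c$ and $c_{23}=b-c$. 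Hence $\cP(\nu;q,t)$ is an explicit sum of at most three-factor products of the form $(q-t)^{\#\kappa}q^{|\kappa|-\#\kappa}$. Since $\mu+\nu$ has weight $n+k+1=N$, every term is some $Q'_\gamma(t)$ with $\gamma\in\ZZ^3$ of third entry $1-b\le 1$.

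The next step is to straighten each $Q'_\gamma(t)$ into partition-indexed $Q'$'s. The cases are as follows.
\begin{enumerate}
\item If $b=0$, then $\gamma=(n+a,k-a,1)$. This is a partition when $a\le k-1$. When $a\ge k$ the tail $(k-a,1)$ needs the straightening rule of \autoref{prop:Qpstraightening} on indices $2,3$, and the case $a=k+1$ gives $(\ast,-1,1)$, which is zero since $s+1=r-1$ forces a cancellation.
\item If $b=1$, then $\gamma=(n+a,k-a+1,0)$, which is a two-row index: either a partition, or it straightens via the two-index rule (with a zero or negative second entry).
\item If $b\ge 2$, the third entry is negative. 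We apply \autoref{prop:Qpstraightening} to indices $2,3$ repeatedly, using the standard facts $Q'_{(\dots,r,r+1,\dots)}=tQ'_{(\dots,r+1,r,\dots)}$ and that $Q'$ with a last entry equal to $-1$ reduces suitably. The result expresses these in terms of two-row and $(N-j,j-1,1)$ shapes.
\end{enumerate}
In the stable range $n\ge k+1$ the first index never needs straightening against the second, since $n+a\ge k-a+b+1$ whenever the second entry matters. This is where the hypothesis $n\ge k+1$ enters. Alternatively, by \autoref{prop:stability} we may assume $n=k+1$ and argue uniformly in $k$.

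Then we collect coefficients target by target. The target $Q'_{(n,k,1)}$ arises only from $\nu=0$, giving coefficient $1$. The target $Q'_{(n,k+1)}$ arises from $(a,b)=(0,1)$, giving $(q-t)$. The target $Q'_{(N-j,j-1,1)}$ with $2\le j\le k$ arises from $b=0$, $a=k+1-j$, which gives $\cP=(q-t)q^{k-j}$, possibly plus straightening contributions. For the two-row targets $Q'_{(N-j,j)}$ we need to sum over all $(a,b,c)$ together with the straightening coefficients $t^{i\pm1}$. Telescoping should turn these into $q^{k-1-j}(q+1)(q-t)^2$ for $2\le j\le k-1$, with the boundary cases $j\in\{0,1,k\}$ giving the special factors $(q^2-t)(q-t)q^{k-1}$, $(q-t)(q^2+q-t)q^{k-2}$ and $(q-t)(q-t+1)$. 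As a sanity check we compare with item (3) of \autoref{prop:Qpexpansions} at $k=2$.

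The main obstacle is this last bookkeeping. Terms with $b\ge2$ straighten into many two-row shapes with alternating $t$-coefficients, and they must be shown to cancel against geometric sums $\sum_c (q-t)^{\#\kappa}q^{|\kappa|-\#\kappa}$. To manage this, we would first handle the $b\ge2$ part by proving a closed formula for straightening $Q'_{(x,y,1-b)}(t)$: we expect only $b\le 2$ to survive, with $Q'_{(x,y,-1)}$ reducing to a short combination of $Q'_{(x,y-1)}$-type terms. Failing that, we would avoid straightening altogether and instead verify the identity by pairing both sides with $P_\lambda(t)$ through the constant term formula \eqref{eq:bconstterm}, using Macdonald's monomial expansion for the three-row $P_\lambda$.
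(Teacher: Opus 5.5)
Your framework is the paper's: the three-variable Kostant partition expansion \eqref{eq:Q'KostantPartitionexp} followed by straightening. However, two facts you rely on are false, and the coefficient computation, which is the entire content of the theorem, is left as ``telescoping should turn these into\dots''.

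\textbf{Terms with $b\ge 2$ vanish.} You never use that $Q'_\gamma(t)=0$ whenever the last entry of $\gamma$ is negative. In \eqref{eq:Qp=hraisingop}, the only factors involving $x_3$ are those for $\varepsilon_1-\varepsilon_3$ and $\varepsilon_2-\varepsilon_3$. They expand in nonnegative powers of $x_i/x_3$, so every monomial keeps a negative $x_3$-exponent and is killed by $\mathbf{h}$. Likewise $Q'_{(\gamma,0)}(t)=Q'_\gamma(t)$. Hence every term with $b\ge 2$ is zero outright. There is nothing to telescope against, and your expectation that $Q'_{(x,y,-1)}(t)$ reduces to $Q'_{(x,y-1)}$-type terms is wrong: it is $0$.

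\textbf{The boundary term is not zero.} You claim $Q'_{(N,-1,1)}(t)=0$, but apply the rule
$Q'_{(\ldots,s,r)}=tQ'_{(\ldots,r,s)}-Q'_{(\ldots,r-1,s+1)}+tQ'_{(\ldots,s+1,r-1)}$
with $(s,r)=(-1,1)$. The first term vanishes, and the last two coincide and add up rather than cancel, giving $(t-1)Q'_{(N)}(t)$. You also never evaluate $Q'_{(n+k,0,1)}(t)=tQ'_{(n+k,1)}(t)$, and you need $Q'_{(m,-d,1)}(t)=0$ for $d\ge 2$.

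\textbf{What the corrected computation gives.} With these facts only $b\in\{0,1\}$ contribute.
\begin{itemize}
\item For $b=0$ the Kostant partition is unique, with weight $w_a=(q-t)q^{a-1}$ for $a\ge 1$.
\item For $b=1$ there are at most two Kostant partitions: $\widetilde{w}_0=q-t$, $\widetilde{w}_1=(q-t)(q-t+1)$, and $\widetilde{w}_a=(q+1)(q-t)^2q^{a-2}$ for $a\ge 2$.
\end{itemize}
The theorem is then read off directly. The coefficient of $Q'_{(N-1,1)}(t)$ is $\widetilde{w}_k+t\,w_k=q^{k-2}(q-t)(q^2+q-t)$. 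The coefficient of $Q'_{(N)}(t)$ is $\widetilde{w}_{k+1}+(t-1)w_{k+1}=q^{k-1}(q-t)(q^2-t)$.

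If you drop the $a=k+1$, $b=0$ term as your proposal asserts, the coefficient of $Q'_{(N)}(t)$ comes out as $q^{k-1}(q+1)(q-t)^2$. So the plan as written does not yield the stated identity; your own $k=2$ sanity check against item (3) of \autoref{prop:Qpexpansions} would expose this. The fallback via \eqref{eq:bconstterm} and the monomial expansion is not developed enough to close the gap.
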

	
	\begin{remark}
		Every coefficient visibly belongs to $\ZZ_{\geq0}[t]$ at $q = t+1$, so \autoref{thm:nk1} proves \autoref{conj:B} for $\mu = (n,k,1)$ with $n \geq k+1 \geq 3$.
	\end{remark}

    By \cite[Ch. III.2, Ex. 2]{MacMainBook}, for a sequence $\alpha = (\alpha_1,\ldots,\alpha_\ell) \in \ZZ^\ell$, if 
    $\alpha_\ell<0$ then $Q'_\alpha(t)=0$, and the raising operator formula \eqref{eq:Qp=hraisingop} implies that $Q'_{(\alpha,0)}(t) = Q'_{\alpha}(t)$. This combined with \autoref{prop:Qpstraightening} gives the following.
    
	% We need two lemmas. 
	% \begin{lemma}\label{lem:vanishing}
	% 	Let $a,b,c \in \ZZ$.
	% 	\begin{enumerate}
	% 		\item If $c < 0$, then $Q'_{(a,b,c)}(t) = 0$.
	% 		\item $Q'_{(a,b,0)}(t) = Q'_{(a,b)}(t)$.
	% 		\item If $b < 0$, then $Q'_{(a,b)}(t) = 0$.
	% 	\end{enumerate}
	% \end{lemma}
	
	% \begin{proof}
	% 	Expand each factor of the product in \eqref{eq:Qp=hraisingop} as
	% 	\begin{align*}
	% 		\dfrac{1-x_i x_j^{-1}}{1-t\, x_i x_j^{-1}} = \sum_{r \geq 0}^{} t^r x_i^r x_j^{-r} \;-\; \sum_{r \geq 0}^{} t^r x_i^{r+1} x_j^{-r-1}.
	% 	\end{align*}
	% 	For the roots $\varepsilon_1 - \varepsilon_3$ and $\varepsilon_2 - \varepsilon_3$, the variable $x_3$ occurs only with non-positive exponents in this expansion, while the root $\varepsilon_1 - \varepsilon_2$ does not involve $x_3$ at all. Hence every monomial in the expansion of $\prod_{\alpha \in R^3_+} \frac{1-x^\alpha}{1-tx^\alpha}\, x^{(a,b,c)}$ has $x_3$-exponent at most $c$. The map $\mathbf{h}$ annihilates any monomial with a negative exponent, so (i) follows. If $c = 0$, the only surviving monomials are those with $x_3$-exponent exactly $0$, i.e., those taking the constant term $1$ from both factors involving $x_3$; what remains is exactly the two-variable expression for $Q'_{(a,b)}(t)$, proving (ii). The same argument with $x_2$ in place of $x_3$ gives (iii).
	% \end{proof}
	
	\begin{lemma}\label{cor:boundarystraight}
		For $m \in \ZZ$,
		\begin{align*}
			&Q'_{(m,0,1)}(t) = t\, Q'_{(m,1)}(t), 
			\\
			&Q'_{(m,-1,1)}(t) = (t-1)\, Q'_{(m)}(t),
			\\
			\hbox{and} \qquad &Q'_{(m,-d,1)}(t) = 0 \quad \hbox{for } d \geq 2.
		\end{align*}
	\end{lemma}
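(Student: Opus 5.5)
The plan is to apply the two-term case of \autoref{prop:Qpstraightening} to the middle and last indices of $Q'_{(m,-d,1)}(t)$, i.e.\ with $(s,r)=(-d,1)$, and then finish with the two facts just recalled: $Q'_\alpha(t)=0$ when the last entry of $\alpha$ is negative, and trailing zeros may be dropped. The straightening relation is derived from the raising operator formula \eqref{eq:Qp=hraisingop}, which is an identity of formal expressions valid for arbitrary integer sequences. So I will apply it to any pair of consecutive indices regardless of sign, with the first coordinate $m$ carried along as a spectator.

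\emph{Case $d=0$.} Here $r-s=1$, so $m=0$ in the odd formula and the sum is empty. We get $Q'_{(m,0,1)}(t)=tQ'_{(m,1,0)}(t)=tQ'_{(m,1)}(t)$.

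\emph{Case $d=1$.} Here $(s,r)=(-1,1)$, so $r-s=2$ and the even formula applies with ``$m$''$=1$. The middle sum is empty, and
\begin{align*}
Q'_{(m,-1,1)}(t)=tQ'_{(m,1,-1)}(t)+(t-1)Q'_{(m,0,0)}(t).
\end{align*}
The first term vanishes because its last entry is negative. The second term equals $(t-1)Q'_{(m)}(t)$ after dropping the zeros.

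\emph{Case $d\ge 2$.} Now $r-s=d+1\ge 3$. Every term produced by \autoref{prop:Qpstraightening} has the form $Q'_{(m,1-i,-d+i)}(t)$ for $0\le i\le \lfloor (d+1)/2\rfloor$; the leading term $tQ'_{(m,1,-d)}$ is the case $i=0$. The last entry is $-d+i\le -d+(d+1)/2=(1-d)/2<0$ because $d\ge2$. Hence every term vanishes, and $Q'_{(m,-d,1)}(t)=0$.

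The main obstacle is justifying that the straightening rule and the vanishing rule hold for sequences with a negative middle entry, and not only for compositions. I would handle this by noting that \autoref{prop:Qpstraightening} follows from the symmetry of the rational function in \eqref{eq:Qp=hraisingop} in two adjacent variables, which is insensitive to the signs of the exponents. For the vanishing, in the product $\prod\frac{1-x^\alpha}{1-tx^\alpha}$ the variable $x_3$ appears only with non-positive exponents. So a negative last entry forces every monomial to have a negative $x_3$-exponent, and $\mathbf{h}$ kills all of them.
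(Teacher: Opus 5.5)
Your proposal is correct and takes essentially the same route as the paper. The paper applies \autoref{prop:Qpstraightening} to the last two entries $(s,r)=(-d,1)$, kills every term whose last entry is negative, and drops trailing zeros, using the same case split $d=0$, $d=1$, $d\geq 2$ and the same bound $-d+\lfloor (d+1)/2\rfloor<0$. The step you flag as the main obstacle is already covered in the paper, which states the straightening rules (citing Macdonald and Lascoux) for arbitrary $\mu\in\ZZ^n$ defined via \eqref{eq:Qp=hraisingop}, so no extra justification is needed there.
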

	
	% \begin{proof}
	% 	Apply \autoref{prop:Qpstraightening} to the last two entries $(s,r) = (-d,1)$, so $r-s = d+1$. If $d = 0$ (the case $r-s = 2m+1$ with $m=0$) we get $t\, Q'_{(A,1,0)}(t) = t\, Q'_{(A,1)}(t)$ by \autoref{lem:vanishing}(ii). If $d = 1$ (the case $r-s=2m$ with $m=1$) we get $t\, Q'_{(A,1,-1)}(t) + (t-1)\, Q'_{(A,0,0)}(t) = (t-1)\, Q'_{(A)}(t)$ by \autoref{lem:vanishing}(i),(ii). If $d \geq 2$, then in the notation of \autoref{prop:Qpstraightening}, writing $d+1 = 2m+1$ or $2m$ according to parity, every composition occurring on the right-hand side has last entry at most $\max(-d, -d+m) = -d+m \leq -1$, so all terms vanish by \autoref{lem:vanishing}(i).
	% \end{proof}
	
	\begin{proof}[Proof of \autoref{thm:nk1}]
		By \eqref{eq:Q'KostantPartitionexp},
		\begin{align}\label{eq:nk1kostant}
			Q'_{(n,k,1)}(q) = \sum_{(a,b,c) \in Q^3_+}^{} \cP((a,b,c);q,t)\, Q'_{(n+a,\,k+b,\,1+c)}(t),
		\end{align}
		where $Q^3_+ = \{(a,b,c) \in \ZZ^3 \mid a \geq 0,\, a+b \geq 0,\, a+b+c=0\}$ is the positive part of the root lattice of type $A_2$, and
		\begin{align*}
			\cP((a,b,c);q,t) = \sum_{\kappa}^{} (q-t)^{\#\kappa} q^{|\kappa| - \#\kappa},
		\end{align*}
		the sum running over Kostant partitions $\kappa$ of $(a,b,c)$. The positive roots are $\alpha_1 = \varepsilon_1-\varepsilon_2, \alpha_2 = \varepsilon_2-\varepsilon_3, \alpha_{12} = \varepsilon_1-\varepsilon_3$.
		
		Only the terms with $c \in \{0,-1\}$ survive in \eqref{eq:nk1kostant}. We compute the corresponding Kostant partition polynomials.
		
		\textbf{The case $c = 0$.} Then $b = -a$ and $\nu = a(\varepsilon_1-\varepsilon_2)$ has the unique Kostant partition $\kappa = a\alpha_1+0\alpha_{12}+0\alpha_2$, so, writing $w_a = \cP((a,-a,0);q,t)$,
		\begin{align*}
			w_a = \begin{cases} 
				1, &\hbox{ if } a = 0,
				\\ 
				(q-t)q^{a-1}, &\hbox{ if } a \geq 1.
			\end{cases}
		\end{align*}
		The composition is $(n+a, k-a, 1)$. For $0 \leq a \leq k-1$ it is a partition. For $a \geq k$ we apply \autoref{cor:boundarystraight}: only $a = k$ and $a = k+1$ survive, contributing $t\, Q'_{(n+k,1)}(t)$ and $(t-1)\, Q'_{(n+k+1)}(t)$ respectively. Hence this case contributes
		\begin{align*}
			\sum_{a=0}^{k-1} w_a\, Q'_{(n+a,\,k-a,\,1)}(t) \;+\; t\, w_k\, Q'_{(n+k,1)}(t) \;+\; (t-1)\, w_{k+1}\, Q'_{(n+k+1)}(t).
		\end{align*}
		
		\textbf{The case $c = -1$.} Then $b = 1-a$. The Kostant partitions of $\nu = (a,1-a,-1)$ are $\kappa = a\alpha_1+0\alpha_{12}+1\alpha_2$, and additionally $\kappa = (a-1)\alpha_1+1\alpha_{12}+0\alpha_2$ when $a \geq 1$: solving $\nu = \alpha(\varepsilon_1-\varepsilon_2) + \beta(\varepsilon_1-\varepsilon_3) + \gamma(\varepsilon_2-\varepsilon_3)$ gives $\beta = a-\alpha$, $\gamma = 1-a+\alpha$, which is nonnegative exactly for $\alpha \in \{a-1, a\}$ (and $\alpha = 0$ when $a = 0$). Therefore, writing $\widetilde{w}_a = \cP((a,1-a,-1);q,t)$,
		\begin{align*}
			\widetilde{w}_a = \begin{cases} q-t, &\hbox{ if } a = 0,
				\\ (q-t)^2+(q-t), &\hbox{ if } a = 1,
				\\ (q-t)^2 q^{a-1}+(q-t)^2 q^{a-2}, &\hbox{ if } a \geq 2.\end{cases}
		\end{align*}
		Since $Q'_{(n+a, k+1-a, 0)}(t) = Q'_{(n+a,\,k+1-a)}(t)$, which vanishes for $a \geq k+2$, this case contributes
		\begin{align*}
			\sum_{a=0}^{k+1} \widetilde{w}_a\, Q'_{(n+a,\,k+1-a)}(t).
		\end{align*}
		
		Now collect terms; set $N = n+k+1$ and reindex the two-row terms by the second entry $j = k+1-a$. The three-row terms $Q'_{(n+a,k-a,1)}(t)$ for $0 \leq a \leq k-1$ have coefficients $w_a = q^{k-j}(q-t)$ with $j = k+1-a$ (and $1$ for $a=0$), matching the last line of the claimed formula. We list the coefficients of the remaining terms.
		\begin{enumerate}
			\item[(i)] Coefficient of $Q'_{(n,k+1)}(t)$ ($j = k+1$, $a=0$) is $\widetilde{w}_0 = q-t$.
			\item[(ii)] Coefficient of $Q'_{(n+1,k)}(t)$ ($j = k$, $a=1$) is $\widetilde{w}_1 = (q-t)(q+1-t)$.
			\item[(iii)] Coefficient of $Q'_{(N-j,j)}(t)$ for $2 \leq j \leq k-1$ (i.e., $2 \leq a \leq k-1$) is $\widetilde{w}_a = (q+1)(q-t)^2 q^{a-2} = q^{k-1-j}(q+1)(q-t)^2$.
			\item[(iv)] Coefficient of $Q'_{(N-1,1)}(t)$ ($j=1$, $a=k$) is
			\begin{align*}
				\widetilde{w}_k + t\, w_k = (q+1)(q-t)^2 q^{k-2} + t q^{k-1}(q-t) = q^{k-2}(q-t)(q^2+q-t).
			\end{align*} 
			
			\item[(v)] Coefficient of $Q'_{(N)}(t)$ ($j=0$, $a=k+1$) is
			\begin{align*}
				\widetilde{w}_{k+1} + (t-1)\, w_{k+1} = (q+1)(q-t)^2 q^{k-1} + (t-1)(q-t)q^k = q^{k-1}(q-t)(q^2-t).
			\end{align*}
		\end{enumerate}
		This accounts for all terms of \eqref{eq:nk1kostant}, and the result is exactly the claimed formula.
	\end{proof}
	
	\begin{remark}
		From the theorem, here also we can write \begin{equation}
			b_{\lambda,(n,k,1)}(q,t) = \sum_{T \in \mathrm{SSYT}(\lambda,(n,k,1))} \psi_T(q,t), 
		\end{equation} for $\lambda \vdash n+k+1$ and $n \geq k+1 \geq 3$, where 
		\begin{equation}
			\psi_T(q,t) = \prod_{\substack{x \in \lambda,\ \coarm(x)>0 \\ T(\mathrm{Left}(x)) < T(x)}} \big(q^{\mathrm{ch}_T(x)} - t\big) \prod_{\text{other } x} q^{\mathrm{ch}_T(x)},
		\end{equation}
	\end{remark}
	
	\section{Open questions}\label{sec:open}
	
	We collect the open problems raised in this paper.
	
	\begin{enumerate}
		\item Prove \autoref{conj:B}: $b_{\lambda,\mu}(t+1,t) \in \ZZ_{\geq0}[t]$ for all $\lambda,\mu$. 
		
		\item \label{q:psigeneral} Is there a weight $\psi_T(q,t)$ on semistandard Young tableaux of general shape such that $\sum_{T \in \SSYT(\lambda,\mu)} \psi_T(q,t) = b_{\lambda,\mu}(q,t)$, with $\psi_T(q,0) = q^{\ch(T)}$ and $\psi_T(1,t) = \psi_T(t)$ for each $T$? \autoref{thm:main} solves this for hook shapes and we gather more evidence in \S\ref{sec:smallcases}, \S\ref{sec:nk1}.
		
		\item Alexandersson and Sulzgruber \cite{AlexanderssonSulzgruber} give a combinatorial formula for $b_{\lambda,\mu}(t+1,1)$; generalize it to $b_{\lambda,\mu}(t+n,n)$, which is positive by \autoref{thm:bqtpn}.
		
	\end{enumerate}

	\section*{Acknowledgements}
	 This project was carried out as part of the PACE 2026 workshop at Peking University organized by Yibo Gao. A.B. thanks S. Viswanath and Per Alexandersson for initial discussions regarding \autoref{conj:B}. 
	
	\section*{AI use declaration}
	LLMs were used for finding many of the proofs and editing the paper. ChatGPT 5.6 Sol was used to find the proof of \autoref{prop:stability}, and proofs of \autoref{thm:mainlocal}, \autoref{thm:twocol}, \autoref{thm:nk1} were assisted by Kimi K3. All the results and proofs are verified by the authors and all the explicit coefficients are checked in \texttt{SageMath}.

	\bibliographystyle{alpha}
	\bibliography{main}

\begin{thebibliography}{KRTW26}

\bibitem[Ale21]{Alexandersson_LLTelementarymelting}
Per Alexandersson.
\newblock {LLT} polynomials, elementary symmetric functions and melting
  lollipops.
\newblock {\em J. Algebr. Comb.}, 53(2):299--325, 2021.

\bibitem[AS22]{AlexanderssonSulzgruber}
Per Alexandersson and Robin Sulzgruber.
\newblock A combinatorial expansion of vertical-strip {LLT} polynomials in the
  basis of elementary symmetric functions.
\newblock {\em Adv. Math.}, 400:58, 2022.
\newblock Id/No 108256.

\bibitem[Bha23]{Bhattacharya22}
Aritra Bhattacharya.
\newblock Haglund's positivity conjecture for multiplicity one pairs.
\newblock {\em S{\'e}min. Lothar. Comb.}, 87:article b87b, 20, 2023.

\bibitem[Bha24a]{Bhattacharya_Thesis}
Aritra Bhattacharya.
\newblock {\em Haglund's Conjecture and Clebsch--Gordan Rule for Macdonald
  Polynomials}.
\newblock PhD thesis, Homi Bhabha National Institute, Chennai, India, April
  2024.
\newblock IMSc Theses/Dissertations, HBNI Th241.

\bibitem[Bha24b]{Bhattacharya2024monomialexpansionformulahalllittlewood}
Aritra Bhattacharya.
\newblock The monomial expansion formula for {H}all-{L}ittlewood
  {$P$}-polynomials, \\ \url{https://arxiv.org/abs/2407.14652}, 2024.

\bibitem[BZ90]{BZ90}
A.~D. Berenshte\u{\i}n and A.~V. Zelevinski\u{\i}.
\newblock When is the multiplicity of a weight equal to {$1$}?
\newblock {\em Funktsional. Anal. i Prilozhen.}, 24(4):1--13, 96, 1990.

\bibitem[Car98]{Carbonara_inverseKFp}
Joaquin~O. Carbonara.
\newblock A combinatorial interpretation of the inverse {$t$}-{K}ostka matrix.
\newblock volume 193, pages 117--145. 1998.
\newblock Selected papers in honor of Adriano Garsia (Taormina, 1994).

\bibitem[Hai99]{HaimanGeometry}
Mark Haiman.
\newblock Macdonald polynomials and geometry.
\newblock In {\em New perspectives in algebraic combinatorics}, pages 207--254.
  Cambridge: Cambridge University Press, 1999.

\bibitem[KRTW26]{KRTW_remarks}
Shashank Kanade, Matthew~C. Russell, Shunsuke Tsuchioka, and S.~Ole Warnaar.
\newblock Remarks on the conjectures of {Capparelli}, {Meurman}, {Primc} and
  {Primc}.
\newblock Preprint, {arXiv}:2404.03851 [math.{CO}] (2026), 2026.

\bibitem[Las05]{Lascoux_addingpm1}
Alain Lascoux.
\newblock Adding {{\(\pm 1\)}} to the argument of a {Hall}-{Littlewood}
  polynomial.
\newblock {\em S{\'e}min. Lothar. Comb.}, 54:b54n, 17, 2005.

\bibitem[Lot11]{Lothaire_AConwords}
M.~Lothaire.
\newblock {\em Algebraic combinatorics on words. {With} preface by {Jean}
  {Berstel} and {Dominique} {Perrin}.}, volume~90 of {\em Encycl. Math. Appl.}
\newblock Cambridge: Cambridge University Press, reprint of the 2002 hardback
  ed. edition, 2011.

\bibitem[LS78]{LS_suruneFoulkes}
Alain Lascoux and Marcel-Paul Sch{\"u}tzenberger.
\newblock Sur une conjecture de {H}. {O}. {Foulkes}.
\newblock {\em C. R. Acad. Sci., Paris, S{\'e}r. A}, 286:323--324, 1978.

\bibitem[Mac95]{MacMainBook}
I.~G. Macdonald.
\newblock {\em Symmetric functions and {H}all polynomials}.
\newblock Oxford Mathematical Monographs. The Clarendon Press, Oxford
  University Press, New York, second edition, 1995.
\newblock With contributions by A. Zelevinsky, Oxford Science Publications.

\bibitem[Mak]{MO190646}
Igor Makhlin.
\newblock On a positivity property of {H}all--{L}ittlewood polynomials.
\newblock MathOverflow.
\newblock URL:https://mathoverflow.net/q/190646 (version: 2014-12-17).

\bibitem[Mel20]{Mellit_Poincarepols}
Anton Mellit.
\newblock Poincar{\'e} polynomials of character varieties, {Macdonald}
  polynomials and affine {Springer} fibers.
\newblock {\em Ann. Math. (2)}, 192(1):165--228, 2020.

\bibitem[Sta99]{EC2}
Richard~P. Stanley.
\newblock {\em Enumerative combinatorics. {Volume} 2}, volume~62 of {\em Camb.
  Stud. Adv. Math.}
\newblock Cambridge: Cambridge University Press, 1999.

\bibitem[Zab98]{Zabrocki_thesis}
Michael Zabrocki.
\newblock {\em On the Action of the {Hall}--{Littlewood} Vertex Operator}.
\newblock PhD thesis, University of California, San Diego, 1998.
\newblock Ph.D. dissertation.

\end{thebibliography}

\end{document}